\theoremstyle{plain}
    \newtheorem{theorem}{Theorem}[section]
    \newtheorem*{theorem*}{Theorem}
    \newtheorem{lemma}[theorem]{Lemma}
\theoremstyle{definition}
    \newtheorem{definition}{Definition}[section]
    \newtheorem{remark}{Remark}[section]
\numberwithin{equation}{section}
\renewcommand{\r}{\right}
\DeclareMathOperator{\Span}{span}
\begin{document}
%%%%%%%%%%%%%%%%%%%%%%%%%%%%%%%%%%%%%%%%%%%%%%%%%%%%%%%%

\title[]{Long-time asymptotics of 3-solitary waves for the damped nonlinear Klein-Gordon equation}
\author[K. Ishizuka]{Kenjiro Ishizuka}
\address[K.Ishizuka]{Research Institute for Mathematical Sciences, Kyoto University, Kyoto 606-8502, JAPAN}
\email{ishizuka@kurims.kyoto-u.ac.jp}
\date{\today}
%\keywords{damped wave equation, dissipation, Strichartz estimates, energy critical}
\date{\today}
\keywords{Nonlinear Klein-Gordon equation, Solitons, Soliton resolution, Multi-solitons, Long-time asymptotics }
%\subjclass[2020]{35Q55,35B44 etc.}
\maketitle

\begin{abstract}
We consider the damped nonlinear Klein-Gordon equation:
\begin{align*}
\partial_{t}^2u-\Delta u+2\alpha \partial_{t}u+u-|u|^{p-1}u=0, \ & (t,x) \in \mathbb{R} \times \mathbb{R}^d,
\end{align*}
where $\alpha>0$, $1\leq d\leq 5$ and energy sub-critical exponents $p>2$. In this paper, we prove that 3-solitary waves behave as if the three solitons are on a line. Furthermore, the solitary waves have alternative signs and their distances are of order $\log{t}$.
\end{abstract}

\tableofcontents
%%%%%%%%%%%%%%%%%%%%%%%%%%%%%%%%%%%%%%%%%%

\section{Introduction}
\subsection{Setting of the problem}
We consider the following damped nonlinear Klein-Gordon equation:
\begin{align}
\label{DNKG}
\tag{DNKG}
	\left\{
	\begin{aligned}
		&\partial_{t}^2u-\Delta u+2\alpha \partial_{t}u+u-f(u)=0, & (t,x) \in \mathbb{R} \times \mathbb{R}^d,
		\\
		&\left(u(0,x),{\partial}_tu(0,x))=(u_{0}(x), u_1(x)\right)\in \mathcal{H},
	\end{aligned}
	\r.
\end{align}
where $\mathcal{H}=H^1(\mathbb{R}^d)\times L^2(\mathbb{R}^d)$, $f(u)=|u|^{p-1}u$, with a power $p$ in the energy sub-critical range, namely
\begin{align*}
2<p<\infty \ \mbox{for}\ d=1,\ 2\ \mbox{and}\ 2<p<\frac{d+2}{d-2}\ \mbox{for}\ d=3,4,5.
\end{align*}
It follows from \cite{BRS} that the Cauchy problem for \eqref{DNKG} is locally well-posed in the energy space $\mathcal{H}$. Moreover defining the energy of a solution $\vec{u}=(u, {\partial}_tu)$ by
\begin{align*}
	E(\vec{u}(t))=\frac{1}{2}\|\vec{u}(t)\|_{\mathcal{H}}^2-\frac{1}{p+1}\| u(t)\|_{L^{p+1}}^{p+1},
\end{align*}
it holds 
\begin{align}\label{energydecay}
	E(\vec{u}(t_2))-E(\vec{u}(t_1))=-2\alpha \int_{t_1}^{t_2} \| {\partial}_tu(t)\|_{L^2}^2dt.
\end{align}
In addition, the equation \eqref{DNKG} enjoys several invariances:
\begin{align}\label{invariance}
\begin{aligned}
u(t,x)&\mapsto -u(t,x),
\\
u(t,x)&\mapsto u(t,x+y),
\\
u(t,x)&\mapsto u(t+s,x),
\end{aligned}
\end{align}
where $y\in \mathbb{R}^d$ and $s\in \mathbb{R}$. In this paper, we are interested in the dynamics of multi-solitary waves related to the ground state $Q$, which is the unique positive, radial $H^1(\mathbb{R}^d)$ solution of 
\begin{align}\label{Qode}
-\Delta Q+Q-f(Q)=0.
\end{align}
The ground state generates the stationary solution $(Q,0)$. By \eqref{invariance}, The function $(-Q,0)$ as well as any translate $(Q(\cdot+y),0)$ are also solutions of \eqref{DNKG}.

There have been intensive studies on global behavior of general (large) solutions for nonlinear dispersive equations, where the guiding principle is the \textit{soliton resolution conjecture}, which claims that generic global solutions are asymptotic to superposition of solitons for large time. For \eqref{DNKG}, Feireisl \cite{F} proved the soliton resolution along a time sequence: for any global solution $\vec{u}$ of \eqref{DNKG}, there is a time sequence $t_n\to \infty$ and a set of stationary solutions $\{ \varphi_k\}_{k=1}^K$ ($K\geq 0$), and sequences $\{ c_{k,n}\}_{k=1,2,\ldots, K,n\in \mathbb{N}}$ such that
\begin{align*}
\lim_{n\to\infty} \left( \|u(t_n)-\sum_{k=1}^K\varphi_k(\cdot -c_{k,n})\|_{H^1}+\|{\partial}_tu(t_n)\|_{L^2}\right)=0,
\\
\lim_{n\to\infty} \left(\min_{j\neq k} |c_{j,n}-c_{k,n}|\right)=\infty.
\end{align*}

Recently, there have been many studies about the superposition of the ground state.

%More recently, Burq, Raugel, and Schlag \cite{BRS} proved the soliton resolution for \eqref{DNKG} in the full limit $t\to\infty$ for all radial solutions. 

\subsection{Main result}

First, we define a solution that behaves as the superposition of ground states.
\begin{definition}\label{defKsoli}
A solution $\vec{u}$ of \eqref{DNKG} is called a \textit{$K$-solitary wave} for $K\in\mathbb{N}$ if there exist $\sigma \in \{-1,1\}^K$ and $z:[0,\infty)\to (\mathbb{R}^d)^K$ such that 
\begin{align}
\label{Ksoli}
\begin{aligned}
\lim_{t\to \infty} \left(\|u(t)-\sum_{k=1}^K\sigma_k Q(\cdot-z_k(t))\|_{H^1}+\|{\partial}_tu(t)\|_{L^2}\right)&=0,
\\
\lim_{t\to\infty}\left( \min_{j\neq k}|z_j(t)-z_k(t)|\right)&=\infty.
\end{aligned}
\end{align}
\end{definition}

\begin{remark}
In space dimension $d=1$, thanks to the damping term $\alpha$, Definition \ref{defKsoli} remains equivalent to the original definition even if it is replaced with convergence in the sense of time sequences $t_n\to\infty$. On the other hand, in space dimension $d\geq 2$, even if a solution converges along a sequence of times, it is not clear whether it remains uniformly bounded in $\mathcal{H}$ for all $t$. Hence, the equivalence of the two notions is not known. In this paper, we are interested in the long-time behavior of the positions of solitons. Therefore, we adopt Definition \ref{defKsoli}.
\end{remark}

%\begin{remark}
%For \eqref{DNKG}, thanks to the damping term $\alpha$, Definition \ref{defKsoli} remains equivalent to the original definition even if it is replaced with convergence in the sense of time sequences $t_n\to\infty$. Therefore, to facilitate the subsequent discussion, we adopt Definition \ref{defKsoli}. The equivalence of the two definitions can be demonstrated using classical compactness arguments. For further details, we refer to \cite[Section 3]{I}.
%\end{remark}

In this paper, we clarify the long-time behavior of a $3$-solitary wave. In the case of $K=3$, the possible sign combinations are either ``all the same sign'' or ``one sign differing from the other two''. Furthermore, C\^{o}te and Du \cite{CD} proved that a $K$-solitary wave in which all solitary waves have the same sign does not exist. Therefore, in a $3$-solitary wave, the sign of one soliton differs from the signs of the other two. On the other hand, when $K\geq 3$, the behavior of the centers of each soliton in a $K$-solitary wave was unclear in the case of general space dimensions. Our main result is that a $3$-solitary wave behaves as $t\to\infty$, the centers of the solitary waves align along a straight line and the sign of each solitary wave differs from that of its neighboring solitary wave. 
\begin{theorem}\label{maintheorem}
If a solution $\vec{u}$ of \eqref{DNKG} is a $3$-solitary wave, there exist $\sigma=\pm 1$, $z_{\infty}\in \mathbb{R}^d$, $\omega_{\infty}\in S^{d-1}$, and a function $z_k:[0,\infty)\to \mathbb{R}^d$ for $k=1,2,3$ such that for all $t>0$,
\begin{align}\label{3soli}
\left\| u(t)-\sigma \sum_{k=1}^3 (-1)^k Q(\cdot-z_k(t))\right\|_{H^1}+\|{\partial}_tu(t)\|_{L^2}\lesssim t^{-1},
\end{align}
and for $k=1,2,3$,
\begin{align}\label{zes}
z_k(t)=z_{\infty}+(k-2)\left( \log{t}-\frac{d-1}{2}\log{(\log{t})}+c_0\right)\omega_{\infty}+O\left(\frac{\log{(\log{t})}}{\log{t}}\right),
\end{align}
where $c_0$ is a constant depending only on $d,\ \alpha$, and $p$.
\end{theorem}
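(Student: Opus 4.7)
The plan is to run a refined modulation-and-bootstrap argument built on the exponential tail interactions between the three solitons, coupled with the dissipation identity \eqref{energydecay}, in the spirit of the author's prior work on $1$- and $2$-solitary waves. As a setup, Definition \ref{defKsoli} provides centers $z_k(t)$ with pairwise distances going to infinity; I refine this into a decomposition
\[
u(t,x)=\sum_{k=1}^{3}\sigma_k Q(x-z_k(t))+\eps(t,x)
\]
with the standard orthogonality conditions $\eps(t)\perp\partial_{x_j}Q(\cdot-z_k(t))$ for all $j,k$, which by the implicit function theorem uniquely determine smooth parameters $z_k$ and a small remainder $(\eps,\partial_t\eps)$. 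Projecting \eqref{DNKG} onto the translation modes produces the effective ODE system
\[
2\alpha\dot z_k=\sum_{j\neq k}\sigma_j\sigma_k\,\Phi(z_j-z_k)+R_k,
\]
where $\Phi(y)\sim\kappa(y/|y|)|y|^{-(d-1)/2}e^{-|y|}$ is the interaction force extracted from the exponential tail of $Q$, and $R_k$ collects higher-order and remainder-driven terms controlled by $\norm{(\eps,\partial_t\eps)}_{\mathcal{H}}$.

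\textbf{Sign pattern and collinearity.} C\^ote--Du \cite{CD} excludes $\sigma_1=\sigma_2=\sigma_3$. Up to relabeling there remain two candidate patterns: the alternating pattern $(+,-,+)$ and the mismatched pattern $(+,+,-)$. In the mismatched case the pair with equal signs satisfies $\sigma_j\sigma_k=+1$ and thus feels an attractive interaction, which in the friction-dominated dynamics drives $|z_j-z_k|$ to shrink at a rate incompatible with \eqref{Ksoli}; a monotonicity argument on a suitable distance functional rules this case out. Only the alternating pattern survives, so one may write $\sigma_k=\sigma(-1)^k$. All three pairwise interactions are then repulsive, and the force balance on the middle soliton reads $2\alpha\dot z_2=\Phi(z_1-z_2)+\Phi(z_3-z_2)+R_2$. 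Dissipation and $L^1$-in-time integrability of $\dot z_2$ (extracted from \eqref{energydecay} via a modulated energy) force the two leading terms to cancel asymptotically; since each $\Phi(z_j-z_2)$ points radially with a scalar magnitude depending monotonically on the distance, this cancellation pins down $(z_1-z_2)/|z_1-z_2|=-(z_3-z_2)/|z_3-z_2|+o(1)$ together with $|z_1-z_2|=|z_3-z_2|+o(1)$. This is exactly asymptotic collinearity and defines $\omega_\infty\in S^{d-1}$ and $z_\infty\in\mathbb{R}^d$.

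\textbf{Sharp asymptotics.} Once collinearity is established, the system reduces to a scalar equation for the common distance $r(t)\to\infty$ of the form $4\alpha\dot r=\kappa\,r^{-(d-1)/2}e^{-r}(1+o(1))$. Integration yields $r(t)=\log t-\tfrac{d-1}{2}\log\log t+c_0+O(\log\log t/\log t)$ with $c_0=c_0(d,\alpha,p)$, giving \eqref{zes}. In parallel, a modulated energy (a perturbation of $E$ quadratic in $(\eps,\partial_t\eps)$ adjusted by interaction cross-terms), combined with \eqref{energydecay}, delivers the coercive bound $\norm{(\eps,\partial_t\eps)}_{\mathcal{H}}^2\cleq e^{-r}\ceq t^{-1}$, which is \eqref{3soli}. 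This refined control on $\eps$ improves $R_k$ and closes the bootstrap confirming the leading-order ansatz for $z_k$.

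\textbf{Main obstacle.} The delicate step is forcing the collinearity. For $K=2$ no alignment issue arises, and for $K\geq 4$ many non-collinear configurations are geometrically plausible; $K=3$ sits at the threshold and requires a dedicated bootstrap on the transverse displacements $(z_k-z_\infty)\cdot\omega_\infty^{\perp}$. The challenge is that the angular correction is governed by the next-to-leading behavior of the interaction kernel $\Phi$ and must be balanced against the friction without introducing circular dependencies between the dynamics of the centers and the energy estimate for $\eps$; sharpening this balance to achieve the $O(\log\log t/\log t)$ error in \eqref{zes} is where I expect the technical core of the proof to concentrate.
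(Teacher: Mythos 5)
Your skeleton (modulation, effective ODE for the centers, collinearity, integration of the scalar distance law) matches the paper's strategy, but two of the steps you lean on would fail as written. First, your energy estimate ignores the exponential instability of the linearization around each ground state: the operator $L$ has a negative eigenvalue, so the modulated energy is \emph{not} coercive on the remainder, and no choice of quadratic correction makes it so without separately controlling the unstable projections $a_k^{+}=\int(\eta-\nu^{-}\varepsilon)\phi_k$. The paper devotes Lemmas \ref{enelem}--\ref{a+lem1} and, under the repulsivity condition \eqref{Vrep}, Lemmas \ref{a+lem2}--\ref{epzes} to exactly this point, obtaining first $|a^{+}|^2=o(\|\vec\varepsilon\|_{\mathcal H}^2+\mathcal F(D))$ and then $|a^{+}|\lesssim\|\vec\varepsilon\|_{\mathcal H}^2+\mathcal F(D)$ by bootstrap arguments that exploit the exponential growth of $a^{+}$ against the energy lower bound $E(\vec u)\geq 3E(Q,0)$. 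Moreover, even granting your claimed bound $\|(\varepsilon,\partial_t\varepsilon)\|_{\mathcal H}^2\lesssim e^{-r}\approx t^{-1}$, it only gives $\|\vec\varepsilon\|_{\mathcal H}\lesssim t^{-1/2}$, which does not prove \eqref{3soli}; the required estimate is $\|\vec\varepsilon\|_{\mathcal H}\lesssim\mathcal F(D)\sim t^{-1}$ (Lemma \ref{epzes} combined with \eqref{logtorder}), one full power better than what your modulated energy would yield.

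Second, the geometric step is misframed and its justification is circular. Since the signs are attached to the solitons, ``$(+,-,+)$'' versus ``$(+,+,-)$'' is not a dichotomy of sign patterns to exclude: after C\^ote--Du one always has two solitons of one sign and one of the other, and what must be \emph{proved} is the dynamical statement that the same-sign pair ends up outermost, i.e. $D_0=|z_1-z_2|\gg\min(D_1,D_2)$. Your heuristic that the same-sign attraction forces collapse is not the operative mechanism (in the limiting collinear configuration that attraction is simply negligible); the paper's proof of the ordering (Lemmas \ref{dtildes}--\ref{3solirep}) combines a monotonicity estimate for $\tilde D-D_0$ with the constraint $E(\vec u)\geq 3E(Q,0)$ fed through the Hamiltonian expansion \eqref{Hamieq}, and this energy-lower-bound input is absent from your outline. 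Likewise, your collinearity argument assumes $\dot z_2\in L^1_t$ ``extracted from \eqref{energydecay}'', but the dissipation identity only yields square-integrable (or pointwise $O(1/t)$, hence non-integrable) control of the modulation velocities; integrability of the middle soliton's velocity is a \emph{consequence} of the cancellation you are trying to prove. The paper instead shows that $W=Z_1+Z_2$ obeys an unstable linear ODE modulo $O(t^{-\theta_4})$ errors, so that if $W$ ever failed to be small it would grow and force $D_0<D$, contradicting the ordering lemma (Lemma \ref{modifiedZ}); this instability-plus-contradiction mechanism, together with the quantitative rate $|D_1-D_2|\lesssim t^{-\theta_3}$, is also what delivers the $O(\log\log t/\log t)$ precision in \eqref{zes}, which your $o(1)$-level reduction cannot reach.
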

C\^{o}te and Du \cite{CD} proved the existence of a 3-solitary wave in which the centers of the solitary waves align along a straight line. Our result guarantees that the asymptotic behavior of any 3-solitary wave necessarily follows the pattern described in Theorem \ref{maintheorem}.

\subsection{Previous results}

First, we introduce results related to the soliton resolution conjecture. The soliton resolution conjecture has been studied for the energy-critical wave equation. We refer to \cite{DKM1,JL4}, in which the conjecture was proven for all radial solutions in space dimension $d\geq 3$. Historically, we refer to \cite{CDKM, DJKM, DKM1, DKM2}, the conjecture has been proved in space dimension $d=3,5,7,\ldots$ or $d=4,6$ using the method of channels. In recent years, Jendrej and Lawrie \cite{JL4} proved the conjecture in space dimension $d\geq 4$ using a novel argument based on the analysis of collision intervals. Similar results regarding the conjecture have been obtained for cases with a potential \cite{JLX,LMZ}, with a damping term \cite{GZ}, as well as for the wave maps \cite{DKMM,JL1}, for the heat equation \cite{A1}, all under some rotational symmetry.

On the other hand, the conjecture remains widely open for the nonlinear Klein-Gordon equation, namely the $\alpha=0$ case. Nakanishi and Schlag \cite{NS} proved the conjecture as long as the energy of the solution is not much larger than the ground state energy in space dimension $d=3$. 
For the damped nonlinear Klein-Gordon equation case, the energy decay \eqref{energydecay} makes the analysis simpler than the undamped case. Historically, Keller \cite{K1} constructed stable and unstable manifolds in a neighborhood of any stationary solution of \eqref{DNKG}, and Feireisl \cite{F} proved the conjecture along a time sequence. Burq, Raugel and Schlag \cite{BRS} proved the conjecture in the full limit $t\to\infty$ for all radial solutions. In particular, C\^{o}te, Martel and Yuan \cite{CMY} proved the conjecture for $d=1$ without size restriction and radial assumption for a non-integrable equation without any size or symmetry restriction and with moving solitons. In the result without any size or symmetry restriction, Kim and Kwon \cite{KK} proved the conjecture for Calogero-Moser derivative nonlinear Sch\"{o}dinger equation.

 On the other hand, in the case of general dimensions $d\geq 2$, it remains unclear how solitary waves are arranged and how they move. This is because the ODE system for the centers of the solitary waves becomes more complex in general dimensions. 
 
 In general space dimension, C\^{o}te, Martel, Yuan, and Zhao \cite{CMYZ} proved that 2-solitary waves with same sign do not exist. Furthermore, they constructed a Lipschitz manifold in the energy space with codimension 2 of those solutions asymptotics to 2-solitary waves with the opposite signs. Subsequently, the author and Nakanishi \cite{IN} provided a complete classification of solutions into 5 types of global behavior for all initial data in a small neighborhood of each superposition of two ground states with the opposite signs. Additionally, results concerning an excited state of \eqref{Qode} (stationary solution) can be found in \cite{CY}. Recently, C\^{o}te and Du \cite{CD} constructed multi-solitary waves that asymptotically resemble various objects by imposing group-symmetric assumptions. This result has facilitated the study of the global behavior of multi-solitary waves in general dimensions.

A crucial aspect of analyzing multi-solitary waves is understanding the movement of their centers caused by interactions between solitary waves. In particular, a 2-solitary wave with the same sign does not exist because the interaction between them generates an attractive force. On the other hand, the author \cite{I} constructed 2-solitary waves with the same signs for the damped nonlinear Klein-Gordon equation with a repulsive delta potential in $d=1$. This is because the repulsive force of the potential dominates the attractive interaction between the solitons.

 In addition, several studies have been conducted on multi-solitary waves for the nonlinear dispersive equations. For the nonlinear Klein-Gordon equation, C\^{o}te and Mun\~{o}z \cite{CM} constructed multi-solitary waves based on the ground state. Furthermore, C\^{o}te and Martel \cite{CM1} extended the work of \cite{CM} by constructing multi-solitary waves based on the stationary solution of the nonlinear Klein-Gordon equation. Notably, multi-solitary waves usually behave asymptotically as the solitary waves moving at distinct constant speeds. More recently, Aryan \cite{A} has constructed a 2-solitary wave where the distance between each solitary wave is $2(1+O(1))\log{t}$ as $t\to \infty$. There are several results regarding multi-solitary waves with logarithmic distance; see \textit{e.g.} \cite{GI,MN,N1,N2}. Notably, for the damped nonlinear Klein-Gordon equation, multi-solitary waves of \eqref{DNKG} cannot move at a constant speed by the damping $\alpha$. \cite{J1,J2,JL1,JL2,JL4,JL3} are studies on two-bubble solutions.

\subsection{Notation}
Let $\{e_1,\ldots, e_d\}$ denote the canonical basis of $\mathbb{R}^d$, and we denote ${\partial}_k$ as the derivative in the $x_k$-direction. The $L^2$ scalar product is denoted for any pair of functions $u,v\in L^2(\mathbb{R}^d)$ by
\begin{align}\label{L2product}
\langle u,v\rangle:=\int_{\mathbb{R}^d}u(x)v(x)dx.
\end{align}
In addition, when pairing elements of $H^{-1}$ and $H^1$, we use the same notation and do not distinguish between the $L^2$ inner product and the corresponding dual pairing, since in all cases in this paper the pairings are induced by the $L^2$ structure (e.g. via integration by parts). The Euclidean inner product is denoted for any pair of vectors $x,y\in \mathbb{R}^d$ by
\begin{align}\label{Euclideanproduct}
x\cdot y=\sum_{k=1}^dx_ky_k.
\end{align}
 In this paper, $X\lesssim Y$ means that $X\leq CY$ for some constant $C>0$. $X\sim Y$ means that $X\lesssim Y$ and $Y\lesssim X$.

\section{Basic properties of the ground state}

In this section, we collect basic properties of the ground state $Q$. First, we recall the decay rate of $Q$. Since $Q$ is radial, we define $q:[0,\infty)\to (0,\infty)$ as 
\begin{align*}
Q(x)=q(|x|).
\end{align*}
Then, $q$ satisfies
\begin{align}\label{qode2}
q^{\prime \prime}+\frac{d-1}{r}q^{\prime}-q+q^p=0.
\end{align}
Furthermore, there exists a constant $c_q$ depending only on $d,\ p$ such that 
\begin{align}\label{qes}
|q(r)-c_qr^{-\frac{d-1}{2}}e^{-r}|+|q^{\prime}(r)+c_qr^{-\frac{d-1}{2}}e^{-r}|\lesssim r^{-\frac{d+1}{2}}e^{-r}.
\end{align}
Next, we recall the following symmetry property
\begin{align}\label{Qsymmetry}
\int_{\mathbb{R}^d} {\partial}_iQ{\partial}_jQ=0\ \mbox{for}\ i\neq j.
\end{align}
Furthermore, the linearized operator $L$  around $Q$ is denoted by
\begin{align}\label{Ldef}
L=-\Delta+1-pQ^{p-1}.
\end{align}
We recall standard properties of the operator $L$. For further details, we refer to \cite{CGNT}.

\begin{lemma}\label{Lproperty}
\begin{enumerate}
\item The unbounded operator $L$ on $L^2(\mathbb{R}^d)$ with domain $H^2(\mathbb{R}^d)$ is self-adjoint, its continuous spectrum is $[1,\infty)$, its kernel is 
\\
$\Span{ \{ {\partial}_kQ : k=1,2,\ldots d \}}$ and it has a unique negative eigenvalue $-\nu_0^2$ for a constant $\nu_0>0$, with a corresponding smooth, normalized, even eigenfunction $\phi$ ($\|\phi\|_{L^2}=1$). Moreover, on $\mathbb{R}^d$, for $\beta\in \mathbb{N}^d$, we have
\begin{align*}
|{\partial}_x^{\beta} \phi(x)|\lesssim e^{-\sqrt{1+\nu_0^2}|x|}.
\end{align*}
\item There exists $c_L>0$ such that, for all $\varphi \in H^1(\mathbb{R}^d)$,
\begin{align*}
\langle L\varphi,\varphi\rangle\geq c_L\|\varphi\|_{H^1}^2-\frac{1}{c_L}\left(\langle \varphi,\phi\rangle^2+\sum_{k=1}^d\langle\varphi,{\partial}_kQ\rangle^2\right).
\end{align*}
\end{enumerate}
\end{lemma}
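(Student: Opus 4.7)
The lemma collects standard spectral facts about the Schr\"odinger operator $L = -\Delta + 1 - pQ^{p-1}$, so my plan is to assemble them from general operator theory together with the specific ground-state structure of $Q$. For part (1), self-adjointness of $L$ on $H^2(\mathbb{R}^d)$ is a routine consequence of Kato-Rellich since $pQ^{p-1}$ is bounded and in fact exponentially decaying by \eqref{qes}. Because this potential vanishes at infinity, Weyl's essential spectrum theorem gives $\sigma_{\mathord{\rm ess}}(L) = \sigma_{\mathord{\rm ess}}(-\Delta+1) = [1,\infty)$, and only finitely many eigenvalues of finite multiplicity can lie in $(-\infty,1)$. Differentiating \eqref{Qode} in $x_k$ yields $L(\partial_k Q) = 0$, proving $\Span\{\partial_k Q\}_{k=1}^d \subset \Ker L$.

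The reverse inclusion is the key non-trivial input. I would decompose $L$ into spherical-harmonic sectors, reducing each sector to a one-dimensional Schr\"odinger operator in the radial variable $r = |x|$, and then apply Sturm oscillation theory together with the non-degeneracy theorems of Kwong and Weinstein for the ground state of \eqref{Qode}. This shows the $\ell = 0$ (radial) sector has exactly one simple negative eigenvalue and trivial kernel; the $\ell = 1$ sector has kernel spanned by $q'(r)$, which after multiplication by the spherical harmonics $x_k/r$ recovers exactly the $d$-dimensional space $\Span\{\partial_k Q\}$; and higher sectors have strictly positive spectrum. Existence of the negative eigenvalue $-\nu_0^2$ is immediate from $\tbra{LQ}{Q} = (1-p)\|Q\|_{L^{p+1}}^{p+1} < 0$, and its radial eigenfunction $\phi$ can be chosen of constant sign by a Perron-Frobenius argument, hence is even in each coordinate. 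Exponential decay of $\phi$ follows from an Agmon-type weighted estimate: from $(-\Delta + (1+\nu_0^2))\phi = pQ^{p-1}\phi$, multiplying by $e^{2\gamma|x|}\phi$ for any $\gamma < \sqrt{1+\nu_0^2}$ and absorbing the exponentially small potential term yields $\|e^{\gamma|\cdot|}\phi\|_{L^2} < \infty$; standard elliptic bootstrapping then upgrades this to the stated pointwise bounds on $\phi$ and its derivatives.

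For part (2), the spectral theorem combined with part (1) gives $\tbra{L\varphi}{\varphi} \geq \lambda_1 \|\varphi\|_{L^2}^2$ for every $\varphi$ that is $L^2$-orthogonal to $\phi$ and to each $\partial_k Q$, where $\lambda_1 > 0$ is the smallest positive eigenvalue. The trivial upper bound $\|\varphi\|_{H^1}^2 \leq \tbra{L\varphi}{\varphi} + (1 + p\|Q\|_{L^\infty}^{p-1})\|\varphi\|_{L^2}^2$ upgrades this $L^2$ coercivity to $H^1$ coercivity on the codimension-$(d+1)$ subspace. For general $\varphi \in H^1$, I would use the orthogonal decomposition $\varphi = \varphi_\perp + \tbra{\varphi}{\phi}\phi + \sum_k \|\partial_k Q\|_{L^2}^{-2}\tbra{\varphi}{\partial_k Q}\partial_k Q$, which is indeed orthogonal because $\phi$ is even while $\partial_k Q$ is odd in $x_k$, and because of \eqref{Qsymmetry}. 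Computing $\tbra{L\varphi}{\varphi} = \tbra{L\varphi_\perp}{\varphi_\perp} - \nu_0^2 \tbra{\varphi}{\phi}^2$ and expressing $\|\varphi_\perp\|_{H^1}^2$ in terms of $\|\varphi\|_{H^1}^2$ and the correction coefficients produces the stated inequality once $c_L$ is chosen small enough to absorb the $\phi$ and $\partial_k Q$ contributions.

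The main obstacle is precisely the non-degeneracy of $\Ker L$ and the simplicity of $-\nu_0^2$ in part (1). Although both are classical for the pure-power nonlinearity $f(u) = |u|^{p-1}u$, they rest on the delicate ODE shooting arguments behind Kwong's uniqueness theorem and do not generalize cleanly to other nonlinearities. Once these inputs are in hand, the rest of the lemma reduces to a standard application of Weyl's theorem, Agmon estimates, and spectral decomposition, of which \cite{CGNT} provides an assembled account.
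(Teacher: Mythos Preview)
Your sketch is correct and hits all the standard ingredients: Kato--Rellich for self-adjointness, Weyl's theorem for the essential spectrum, Kwong's non-degeneracy for the kernel, the variational computation $\tbra{LQ}{Q}=(1-p)\|Q\|_{L^{p+1}}^{p+1}<0$ for the negative eigenvalue, Agmon estimates for the decay of $\phi$, and a spectral decomposition for the coercivity. You also correctly flag the kernel non-degeneracy as the only genuinely delicate input.

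Note, however, that the paper does not supply its own proof of this lemma at all: it is stated as a recollection of known facts, with a pointer to \cite{CGNT} for details. So there is no ``paper's approach'' to compare against beyond that citation; your outline is essentially a reconstruction of what one finds in that reference and the surrounding literature.
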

The linearized evolution around $(Q,0)$ is given by
\begin{align}\label{lode1}
\frac{d}{dt}v= 
\begin{pmatrix}
0 & 1
\\
-L& -2\alpha
\end{pmatrix}
v.
\end{align}
We define $\nu^{\pm}$ and $Y^{\pm}$ as follows:
\begin{align*}
\nu^{\pm}&=-\alpha \pm \sqrt{\alpha^2+\nu_0^2},
\\
Y^{\pm}&=
\begin{pmatrix}
1
\\
\nu^{\pm}
\end{pmatrix}
\phi.
\end{align*}
Given that $\nu^+>0$, the solution $v=e^{\nu^+t}Y^+$ of \eqref{lode1} exhibits the exponential instability of $(Q,0)$. In particular, we see that the presence of the damping $\alpha>0$ in the equation does not remove the exponential instability.

Last, we collect estimates of the nonlinear interaction. We define $\theta_{\star}$ as a constant satisfying
\begin{align}\label{thetastar}
\theta_{\star}=\min{(p-1,2)}.
\end{align}
\begin{lemma}\label{ni}
We assume that $z_1,\ z_2\in \mathbb{R}^d$ satisfy $|z_1-z_2|\gg 1$. Then by defining
\begin{align*}
Q_1=Q(\cdot-z_1),\ Q_2=Q(\cdot-z_2),\ \phi_1=\phi(\cdot-z_1),\ \phi_2=\phi(\cdot-z_2),
\end{align*}
we have for any $0<m<m^{\prime}$,
\begin{align}
\int |Q_1Q_2|^m+\int (|\nabla Q_1||\nabla Q_2|)^m &\lesssim e^{-m^{\prime}|z_1-z_2|}, \label{si1}
\\
\int |Q_1|^m|Q_2|^{m^{\prime}}&\lesssim q(|z_1-z_2|)^m. \label{si2}
\end{align}
In particular, we have
\begin{align*}
\int |Q_1|^{p-1}\phi_2\lesssim q(|z_1-z_2|),
\end{align*}
and for any $1<\theta<\theta_{\star}$, 
\begin{align*}
\int |Q_1|^2|Q_2|^{p-1}\lesssim q(|z_1-z_2|)^{\theta}.
\end{align*}
\end{lemma}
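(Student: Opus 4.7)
The plan is to reduce every estimate to the pointwise decay $Q(x) \lesssim \langle x\rangle^{-(d-1)/2} e^{-|x|}$ and $|\nabla Q(x)| \lesssim \langle x\rangle^{-(d-1)/2} e^{-|x|}$ provided by \eqref{qes}, together with $|\phi(x)|\lesssim e^{-\sqrt{1+\nu_0^2}|x|}$ from Lemma \ref{Lproperty}. Throughout I write $R = |z_1-z_2|$ and $\omega = (z_1-z_2)/R$. The strategy is to establish \eqref{si2} first by direct localization, then deduce \eqref{si1} from it, and finally treat the last inequality by the same localization technique.

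For \eqref{si2} I would partition $\mathbb{R}^d$ into three regions: $C_1=\{|x-z_1|\leq R/3\}$, $C_2=\{|x-z_2|\leq R/3\}$, and $C_3$ where both distances exceed $R/3$. On $C_2$, after the change of variables $y=x-z_2$ and the Taylor expansion $|y-R\omega|=R-y\cdot\omega+O(|y|^2/R)$, the pointwise bound \eqref{qes} gives $|Q_1(x)|^m\lesssim R^{-m(d-1)/2}\,e^{-mR}\,e^{m\,y\cdot\omega}$. The resulting integral $\int Q(y)^{m'}e^{m\,y\cdot\omega}\,dy$ converges thanks to $Q(y)^{m'}\lesssim e^{-m'|y|}$, $y\cdot\omega\leq|y|$, and the strict gap $m<m'$, which produces the integrable exponent $e^{-(m'-m)|y|}$; this yields $\int_{C_2}\lesssim R^{-m(d-1)/2}\,e^{-mR}\lesssim q(R)^m$. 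On $C_1$ the symmetric computation gives a bound of order $q(R)^{m'}\lesssim q(R)^m$, and on $C_3$ both pointwise bounds applied simultaneously yield $R^{-(m+m')(d-1)/2}e^{-mR}\lesssim q(R)^m$.

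For \eqref{si1} I would use the same splitting. On the region $\{|x-z_2|\geq R/2\}$ write $|Q_2|^m=|Q_2|^{\delta}|Q_2|^{m-\delta}$ with a small $\delta>0$; the factor $|Q_2|^{\delta}$ contributes the uniform gain $\lesssim e^{-\delta R/2}$, while the remaining integral $\int|Q_1|^m|Q_2|^{m-\delta}$ is controlled by \eqref{si2} and is $\lesssim q(R)^{m-\delta}$. Combining yields a bound of the form $R^{c}e^{-(m-\delta/2)R}$, and the gap between the exponents on the two sides of \eqref{si1} can then be exploited by choosing $\delta$ appropriately so that the polynomial prefactor is absorbed into the exponential. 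The $\nabla Q$ estimate is identical since $|\nabla Q|$ obeys the same pointwise decay as $Q$ by \eqref{qes}.

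For the last inequality the same localization strategy applies: using $|\phi(x)|\lesssim e^{-\sqrt{1+\nu_0^2}|x|}$ and $|Q(x)|^{p-1}\lesssim e^{-(p-1)|x|}$, both of which decay strictly faster than rate $\theta_1$, the argument for \eqref{si2} yields $\int|Q_1|^{p-1}\phi_2\lesssim q(R)^{\theta_1}$; the strict gap $\min(p-1,\sqrt{1+\nu_0^2})-\theta_1>0$ supplies the slack needed to absorb the polynomial prefactor $R^{-\theta_1(d-1)/2}$ arising from \eqref{qes}. The estimate for $\int|Q_1|^2|Q_2|^{p-1}$ is obtained the same way, with $\min(2,p-1)>\theta_1$ supplying the slack. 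The only technical point throughout is the careful domain decomposition combined with the strict inequalities between the natural exponential rates and the target rate; no step is expected to present difficulty beyond routine bookkeeping.
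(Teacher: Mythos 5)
The paper does not actually prove this lemma; it simply refers to \cite[Lemma 7]{N2}, so your localization argument is a genuine self-contained substitute, and it is the standard one: the pointwise decay \eqref{qes} combined with a three-region decomposition is exactly how such interaction integrals are estimated. Your treatment of \eqref{si2} and of the final two integrals is correct. One small imprecision: on $C_1$ the symmetric computation does not produce a bound of order $q(R)^{m'}$, because $\int_{|y|\le R/3}Q(y)^{m}e^{m'y\cdot\omega}\,dy$ grows like $R^{d-1}e^{(m'-m)R/3}$ rather than converging; the region is still harmless since $m'-\tfrac{1}{3}(m'-m)>m$, but you should say so.

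The one real problem is \eqref{si1}. As printed, the lemma asserts $\int|Q_1Q_2|^m\lesssim e^{-m'|z_1-z_2|}$ for $0<m<m'$, and this is false: already in $d=1$ one has $\int e^{-|x|}e^{-|x-R|}\,dx\sim Re^{-R}$, so $\int|Q_1Q_2|^m$ is comparable to $e^{-mR}$ up to polynomial factors and cannot be dominated by $e^{-m'R}$ with $m'>m$. The correct (and clearly intended) hypothesis for \eqref{si1} is $0<m'<m$, which is how the estimate is used later; for instance the bound $O(e^{-\frac{3}{4}D})$ for $\int|\nabla Q_i||\nabla Q_k|$ in \eqref{zespart2} is \eqref{si1} with $m=1$, $m'=\tfrac34$. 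Your own argument betrays this: after sacrificing $\delta$ you arrive at $R^{c}e^{-(m-\delta/2)R}$, and ``exploiting the gap between the exponents'' to absorb $R^{c}$ into $e^{-m'R}$ requires $m-\delta/2>m'$, i.e.\ $m'<m$; under the stated hypothesis $m<m'$ no admissible $\delta$ exists and the step fails. So your proof of \eqref{si1} is correct only for the corrected statement, and you should make the direction of the inequality between $m$ and $m'$ explicit rather than leaving it implicit.
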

\begin{proof}
By direct computation, we obtain these estimates.  For details, see \cite[Lemma 7]{N2}. 
\end{proof}

We define $H:\mathbb{R}^d\to \mathbb{R}^d$ as 
\begin{align}\label{Hdef}
H(z)=-\int_{\mathbb{R}^d}(\nabla Q^p(x))Q(x-z)dx.
\end{align}
Furthermore, we define $g:(0,\infty)\to \mathbb{R}$ as
\begin{align*}
H(z)=\frac{z}{|z|} g(|z|).
\end{align*}
We note that the above definition is well-defined. Then, we have for $h\in \mathbb{R}$,
\begin{align*}
g(r+h)-g(r)=-\int_{\mathbb{R}^d} {\partial}_1(Q^p)\left( Q(x-(r+h)e_1)-Q(x-re_1)\right).
\end{align*}
Therefore, we obtain
\begin{align*}
g^{\prime}(r)=\int_{\mathbb{R}^d} {\partial}_1(Q^p){\partial}_1Q(x-re_1)=-\int_{\mathbb{R}^d}Q^p(x){\partial}_1^2Q(x-re_1)dx.
\end{align*}
In addition, by direct computation, we have
\begin{align*}
{\partial}_1^2Q(x-re_1)=\frac{\sum_{k=2}^dx_k^2}{|x-re_1|^3}q^{\prime}(|x-re_1|)+\frac{(x_1-r)^2}{|x-re_1|^2}q^{\prime \prime}(|x-re_1|).
\end{align*}
By \eqref{qode2}, we have
\begin{align*}
|g(r)-c_gr^{-\frac{d-1}{2}}e^{-r}|+|g^{\prime}(r)+c_gr^{-\frac{d-1}{2}}e^{-r}|\lesssim r^{-\frac{d+1}{2}}e^{-r},
\end{align*}
where 
\begin{align*}
c_g=\int_{\mathbb{R}^d} Q^pe^{-x_1}.
\end{align*}
We will use $H$ and $g$ more precisely in Section 3.

\section{Dynamics of $K$-solitary waves}
In this section, we compute the dynamics around the sum of the ground state.

\subsection{Modulation of the center}

When we compute the dynamics around the sum of the ground state, we have some freedom for the choice of centers of the ground state. Here, to facilitate the subsequent discussion, we consider the following choice of the centers.

\begin{lemma}\label{modulation}
There exists $\delta_{mod}\in (0,1)$ such that for any $\varphi=(\varphi_1,\varphi_2) \in \mathcal{H}$,  $z\in (\mathbb{R}^d)^K$ and $\sigma \in \{-1,1\}^K$satisfying
\begin{align*}
\min_{i\neq j} |z_i-z_j|&\geq \frac{1}{\delta_{mod}},
\\
\left\| \varphi_1-\sum_{k=1}^K \sigma_k Q(\cdot-z_k)\right\|_{H^1}+\|\varphi_2\|_{L^2}&\leq \delta_{mod},
\end{align*}
there exists a unique $\tilde{z}\in (\mathbb{R}^d)^K$ satisfying $|z-\tilde{z}|\lesssim \delta_{mod}$ and
\begin{align*}
\int \left\{\varphi_2+2\alpha(\varphi_1-\sum_{k=1}^K \sigma_k Q(\cdot-\tilde{z}_k))\right\} {\partial}_lQ(\cdot-\tilde{z}_i)=0
\end{align*}
for $1\leq l\leq d$ and $1\leq i\leq K$. Moreover $\tilde{z}$ is $C^1$ with respect to $(\varphi,z)$ and we have
\begin{align*}
|z-\tilde{z}|\lesssim \left\| \varphi_1-\sum_{k=1}^K \sigma_k Q(\cdot-z_k)\right\|_{H^1}+\|\varphi_2\|_{L^2}.
\end{align*}
\end{lemma}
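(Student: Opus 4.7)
The natural approach is the implicit function theorem applied to the $Kd$ orthogonality conditions viewed as a system for $\tilde z$. I would set
\begin{align*}
F_{i,l}(\varphi, z, \tilde z) := \int \left\{\varphi_2 + 2\alpha\left(\varphi_1 - \sum_{k=1}^K \sigma_k Q(\cdot - \tilde z_k)\right)\right\} \partial_l Q(\cdot - \tilde z_i)
\end{align*}
for $1 \le i \le K$ and $1 \le l \le d$, so that the required orthogonality reads $F(\varphi, z, \tilde z) = 0$. At the reference point $\varphi_1^\star := \sum_k \sigma_k Q(\cdot - z_k)$, $\varphi_2^\star := 0$, $\tilde z = z$, one checks $F = 0$ identically, so the problem reduces to inverting $F$ in the $\tilde z$ variable near this point.

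The heart of the argument is the partial Jacobian $D_{\tilde z} F$ at the reference point: the contribution coming from differentiating $\partial_l Q(\cdot - \tilde z_i)$ vanishes at $\star$ because $\varphi_2^\star = 0$ and $\varphi_1^\star - \sum_k \sigma_k Q(\cdot - z_k) = 0$, so only
\begin{align*}
\frac{\partial F_{i,l}}{\partial \tilde z_{j,m}}\bigg|_{\star} = 2\alpha\, \sigma_j \int \partial_m Q(\cdot - z_j)\, \partial_l Q(\cdot - z_i)
\end{align*}
survives. By \eqref{Qsymmetry} together with the radial symmetry of $Q$, the diagonal block $i = j$ equals $2\alpha \sigma_i \|\partial_1 Q\|_{L^2}^2 \, I_d$, while by the Cauchy--Schwarz inequality and \eqref{si1} of Lemma \ref{ni}, each off-diagonal block $i \neq j$ is bounded by $e^{-m'|z_i - z_j|}$ for some $m'>1$, which is arbitrarily small once $\delta_{mod}$ is small. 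Thus $D_{\tilde z} F|_{\star}$ is an invertible block-diagonal operator plus a small perturbation, and is uniformly invertible for $\delta_{mod}$ small enough.

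With this in hand, the implicit function theorem yields a unique $C^1$ function $\tilde z = \tilde z(\varphi, z)$ solving $F = 0$ near $\star$, with $|z - \tilde z| \lesssim \delta_{mod}$. The quantitative Lipschitz bound follows by expanding $0 = F(\varphi, z, \tilde z) - F(\varphi^\star, z, z)$ to first order as
\begin{align*}
D_{\tilde z} F|_{\star}\,(\tilde z - z) = -D_\varphi F|_{\star}\,(\varphi - \varphi^\star) + O\bigl(|\tilde z - z|^2\bigr),
\end{align*}
inverting the Jacobian, and observing that $D_\varphi F|_{\star}$ is a bounded linear functional on $\mathcal{H}$ with operator norm $\lesssim \|\partial_l Q\|_{L^2} \lesssim 1$.

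The main technical point is ensuring uniform invertibility of the Jacobian despite the coupling across distinct solitons through the cross inner products $\int \partial_m Q(\cdot - z_j)\, \partial_l Q(\cdot - z_i)$ for $i \neq j$. This is resolved precisely by combining the separation hypothesis $\min_{i\neq j}|z_i - z_j| \ge 1/\delta_{mod}$ with the exponential-decay estimate \eqref{si1}, which forces the off-diagonal coupling to vanish as $\delta_{mod}\to 0$ and leaves the invertible block-diagonal part in charge.
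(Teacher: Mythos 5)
Your proof is correct and follows the standard implicit-function-theorem argument: the paper itself does not prove this lemma but cites \cite[Lemma 4.1]{IN}, whose proof is exactly this computation (vanishing of the $\partial_l\partial_mQ$ term at the reference point, diagonal Jacobian blocks $2\alpha\sigma_i\|\partial_1Q\|_{L^2}^2 I_d$ via \eqref{Qsymmetry}, exponentially small off-diagonal coupling from the separation hypothesis, then uniform invertibility and the first-order expansion for the Lipschitz bound). No gaps worth flagging.
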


\begin{remark}
This lemma may appear to be expressed differently from \cite[Lemma 4.1]{IN}. However, it is equivalent to \cite[Lemma 4.1]{IN}. 
\end{remark}

\begin{proof}
See \cite[Lemma 4.1]{IN}.
\end{proof}

This lemma enables us to impose constraints on the centers of a $K$-solitary wave.

\begin{lemma}\label{modKsoli}
When a solution $\vec{u}$ satisfies \eqref{Ksoli} for some $\sigma \in \{-1,1\}^K$ and $z$, there exist a $C^1$ function $\tilde{z}:[0,\infty)\to (\mathbb{R}^d)^K$ and $T>0$ such that 
\begin{align}
\lim_{t\to\infty}(\left\| u(t)-\sum_{k=1}^K \sigma_kQ(\cdot-\tilde{z}(t))\right\|_{H^1}+\|{\partial}_tu(t)\|_{L^2})&=0,\label{modlim1}
\\
\lim_{t\to\infty} \min_{i\neq j}|\tilde{z}_i(t)-\tilde{z}_j(t)|&=\infty,\label{modlim2}
\\
\lim_{t\to\infty} |z(t)-\tilde{z}(t)|&=0,\label{modlim3}
\end{align}
and for $1\leq l\leq d$, $1\leq i\leq K$, $t>T$,
\begin{align}\label{modeq}
\int \{ {\partial}_tu(t)+2\alpha(u(t)-\sum_{k=1}^K \sigma_k Q(\cdot-\tilde{z}_k(t)))\} {\partial}_lQ(\cdot-\tilde{z}_i(t))=0.
\end{align}
\end{lemma}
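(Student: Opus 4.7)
The plan is to apply the static modulation result, Lemma \ref{modulation}, pointwise in time for $t$ large. By hypothesis \eqref{Ksoli}, there exists $T>0$ such that for every $t>T$ the pair $(\vec u(t), z(t))$ lies in the neighborhood where Lemma \ref{modulation} applies: the minimal separation $\min_{i\neq j}|z_i(t)-z_j(t)|$ exceeds $1/\delta_{mod}$, while $\|u(t)-\sum_k \sigma_k Q(\cdot-z_k(t))\|_{H^1}+\|{\partial}_t u(t)\|_{L^2}\leq \delta_{mod}$. For each such $t$, Lemma \ref{modulation} then produces a unique $\tilde z(t)\in (\mathbb{R}^d)^K$ close to $z(t)$ satisfying the orthogonality condition \eqref{modeq}.

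The three asymptotic properties follow directly from the quantitative bound
\[
|z(t)-\tilde z(t)|\lesssim \left\|u(t)-\sum_{k=1}^K \sigma_k Q(\cdot-z_k(t))\right\|_{H^1}+\|{\partial}_t u(t)\|_{L^2}
\]
supplied by the same lemma. The right-hand side tends to $0$ as $t\to\infty$ by \eqref{Ksoli}, which gives \eqref{modlim3}. A triangle inequality
\[
\left\|u(t)-\sum_k \sigma_k Q(\cdot-\tilde z_k(t))\right\|_{H^1}\leq \left\|u(t)-\sum_k \sigma_k Q(\cdot-z_k(t))\right\|_{H^1}+\sum_k \|Q(\cdot-z_k(t))-Q(\cdot-\tilde z_k(t))\|_{H^1},
\]
combined with \eqref{Ksoli} and strong continuity of translations in $H^1$, then yields \eqref{modlim1}. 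Finally, combining $\min_{i\neq j}|z_i(t)-z_j(t)|\to\infty$ from \eqref{Ksoli} with \eqref{modlim3} gives \eqref{modlim2}.

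For the $C^1$ regularity on $[T,\infty)$, I would use that Lemma \ref{modulation} outputs $\tilde z$ as a $C^1$ function of its inputs via the implicit function theorem; equivalently, one may differentiate the defining relation \eqref{modeq} in $t$, substitute ${\partial}_t^2 u$ using \eqref{DNKG}, and solve the resulting linear system for $\dot{\tilde z}(t)$. Invertibility of the relevant Gram matrix is guaranteed because, by \eqref{Qsymmetry} and Lemma \ref{ni}, the matrix $\bigl(\int {\partial}_l Q(\cdot-\tilde z_i){\partial}_m Q(\cdot-\tilde z_j)\bigr)_{(i,l),(j,m)}$ is block-diagonal up to exponentially small corrections in the separation. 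Extension to $[0,\infty)$ is then achieved by any $C^1$ prolongation, since \eqref{modeq} is only required for $t>T$. No genuine obstacle arises here; the only point requiring care is this uniform invertibility, which is already implicit in the proof of Lemma \ref{modulation}.
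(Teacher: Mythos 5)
Your argument is correct and follows essentially the same route as the paper: apply Lemma \ref{modulation} pointwise in $t$ for $t>T$, use the resulting quantitative bound $|z(t)-\tilde z(t)|\lesssim \|\vec u(t)-\vec Q_{\Sigma}(t)\|_{\mathcal H}$ to get \eqref{modlim3} and hence \eqref{modlim2}, and close with the triangle inequality for \eqref{modlim1}. The only difference is that you spell out the $C^1$ regularity and the extension to $[0,\infty)$ slightly more explicitly than the paper, which simply quotes Lemma \ref{modulation}.
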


\begin{proof}
By \eqref{Ksoli}, for $0<\delta<\delta_{mod}$, there exists $T>0$ such that for $t>T$
\begin{align*}
\| u(t)-\sum_{k=1}^K \sigma_kQ(\cdot-z_k(t))\|_{H^1}+\|{\partial}_tu(t)\|_{L^2}<\delta.
\end{align*}
Then, by Lemma \ref{modulation}, there exists the $C^1$ function $\tilde{z}$ such that for $t>T,\ 1\leq l\leq d,\ 1\leq i\leq K$ \eqref{modeq} is satisfied. Furthermore, we have
\begin{align*}
|z(t)-\tilde{z}(t)|\lesssim  \|u(t)-\sum_{k=1}^K \sigma_kQ(\cdot-z_k(t))\|_{H^1}+\|{\partial}_tu(t)\|_{L^2}.
\end{align*}
Thus we obtain \eqref{modlim2} and \eqref{modlim3}. We also have
\begin{align*}
\| u(t)-\sum_{k=1}^K \sigma_kQ(\cdot-\tilde{z}_k(t))\|_{H^1}&\leq \| u(t)-\sum_{k=1}^K \sigma_kQ(\cdot-z_k(t))\|_{H^1}
\\
&\quad +\sum_{k=1}^K\|Q(\cdot-z_k(t))-Q(\cdot-\tilde{z}_k(t))\|_{H^1}.
\end{align*}
By \eqref{Ksoli} and 
\begin{align*}
\sum_{k=1}^K\|Q(\cdot-z_k(t))-Q(\cdot-\tilde{z}_k(t))\|_{H^1}\lesssim |z(t)-\tilde{z}(t)|,
\end{align*}
we obtain \eqref{modlim1}. Thus we complete the proof.
\end{proof}

%By Lemma \ref{modKsoli}, to facilitate the subsequent discussion, we impose the following condition:
%\begin{align}\label{sa}
%\left\{
%\begin{aligned}
%&\lim_{t\to \infty}\left(\|u(t)-\sum_{k=1}^K\sigma_kQ(\cdot-z_k(t))\|_{H^1}+\|{\partial}_tu(t)\|_{L^2}\right)=0,
%\\
%&\lim_{t\to \infty}\min_{i\neq j}|z_i(t)-z_j(t)|=\infty,
%\\
%&\|u(t)-Q(\cdot-z(t))\|_{H^1}+\|{\partial}_tu(t)\|_{L^2}\ll 1,\ z(t)\gg 1\ \mbox{for} t\geq 0,
%\\
%&\int\{{\partial}_tu(t)+2\alpha(u(t)-\sum_{k=1}^K\sigma_kQ(\cdot-z_k(t)))\}{\partial}_lQ(\cdot-z_i(t))=0\ %\mbox{for}\ t\geq 0,\ 1\leq l\leq N,\ 1\leq i\leq K,
%\end{aligned}
%\r.
%\end{align}
%for some $z:[0,\infty)\to (\mathbb{R}^N)^K,\ \sigma \in\{-1,1\}^K$.

\subsection{Decomposition close to the sum of solitary waves}

In this subsection, we consider the global dynamics around a $K$-solitary wave. First, we define some notations. For $z:[0,\infty)\to (\mathbb{R}^d)^K$ and $\sigma\in \{-1,1\}^K$, we define
\begin{align*}
Q_k=\sigma_k Q(\cdot-z_k),\ \vec{Q}_k=
\begin{pmatrix}
Q_k\\
0   \\
\end{pmatrix}
,\ Q_{\sum}=\sum_{k=1}^KQ_k,\ \vec{Q}_{\sum}=
\begin{pmatrix}
Q_{\sum}
\\
0
\end{pmatrix}
, \phi_k=\phi(\cdot-z_k).
\end{align*}
Furthermore, for a solution $\vec{u}$ of \eqref{DNKG}, we define 
\begin{align*}
\vec{\varepsilon}=
\begin{pmatrix}
\varepsilon
\\
\eta
\end{pmatrix}
=\vec{u}-\vec{Q}_{\sum}.
\end{align*}
We note that $\vec{\varepsilon}$ depends only on $\vec{u}$, $z$, and $\sigma$. Furthermore, we define
\begin{align*}
D(t)=\min_{i\neq j}|z_i(t)-z_j(t)|.
\end{align*}
Here, to facilitate the subsequent discussion, we give the following conditions:
\begin{align}\label{sa}
\left\{
\begin{aligned}
&\lim_{t\to \infty}\|\vec{\varepsilon}(t)\|_{\mathcal{H}}=0,
\\
&\lim_{t\to \infty}D(t)=\infty,
\\
&\|\vec{\varepsilon}(t)\|_{\mathcal{H}}+\frac{1}{D(t)}<\delta\ \mbox{for}\ t\geq 0,
\\
&\int (\eta+2\alpha \varepsilon){\partial}_lQ(\cdot-z_k)=0\  \mbox{for}\ t\geq 0,\ 1\leq l\leq d,\ 1\leq i\leq K,
\end{aligned}
\r.
\end{align}
where $\delta>0$. Before we consider \eqref{DNKG}, we define
\begin{align*}
\mathcal{L}&=-\Delta+1-f^{\prime}(Q_{\sum}),
\\
N_1&=\sum_{k=1}^K \dot{z}_k\cdot \nabla Q_k,
\\
N_2&=f(u)-\left(\sum_{k=1}^Kf(Q_k)\right)-f^{\prime}(Q_{\sum})\varepsilon.
\end{align*}
Then, \eqref{DNKG} can be rewritten as
\begin{align}\label{epsilonsystem}
\frac{d}{dt}
\begin{pmatrix}
\varepsilon
\\
\eta
\end{pmatrix}
=
\begin{pmatrix}
0 & 1
\\
-\mathcal{L} & -2\alpha
\end{pmatrix}
\begin{pmatrix}
\varepsilon
\\
\eta
\end{pmatrix}
+
\begin{pmatrix}
N_1
\\
N_2
\end{pmatrix}
.
\end{align}

Here, we define 
\begin{align*}
R=\sum_{i\neq j}p|Q_i|^{p-1}Q_j.
\end{align*}
We will use $R$ later.

\subsection{Spectral decomposition of the solution}
In this subsection, we estimate $\|\vec{\varepsilon}\|_{\mathcal{H}}$. First, we define 
\begin{align*}
a_k^{\pm}&=\int ( \eta-\nu^{\mp}\varepsilon)\phi_k\ \mbox{for}\ 1\leq k\leq K,
\\
b_{k,l}&=\int \eta{\partial}_lQ_k\ \mbox{for}\ 1\leq k\leq K,\ 1\leq l\leq d.
\end{align*}

In addition, to succinctly represent the forces acting on the centers of the solitary waves, we define
\begin{align}\label{Fdef}
\mathcal{F}(r)=\frac{g(r)}{2\alpha\|{\partial}_1Q\|_{L^2}^2}.
\end{align}
Notably, $\mathcal{F}$ satisfies for $r>1$,
\begin{align}\label{Fesa}
|\mathcal{F}(r)-c_{\star}r^{-\frac{d-1}{2}}e^{-r}|+|\mathcal{F}^{\prime}(r)+c_{\star}r^{-\frac{d-1}{2}}e^{-r}|\lesssim r^{-\frac{d+1}{2}}e^{-r},
\end{align}
where $c_{\star}=\frac{c_g}{2\alpha \|{\partial}_1Q\|_{L^2}^2}$.\ In particular, we have for $1\ll r<r^{\prime}$,
\begin{align}\label{Fesb}
\mathcal{F}(r^{\prime})(r^{\prime}-r)\lesssim\mathcal{F}(r)-\mathcal{F}(r^{\prime})\lesssim \mathcal{F}(r)(r^{\prime}-r).
\end{align}
\begin{lemma}\label{expansionlem}
Let $\vec{u}$ be a solution of \eqref{DNKG} satisfying \eqref{sa} for some $z,\ \sigma$, and $0<\delta\ll 1$.\ Then for any $1<\theta<\theta_{\star}$, we have for $1\leq k\leq K$ and $1\leq l\leq d$,
\begin{align}
\left|\dot{z}_k+\sum_{1\leq i\leq K,\ i\neq k}\sigma_i\sigma_k\mathcal{F}(|z_k-z_i|) \frac{z_k-z_i}{|z_k-z_i|}\right|&\lesssim e^{-\theta D}+\|\vec{\varepsilon}\|_{\mathcal{H}}^2,\label{zes}
\\
\left| \frac{d}{dt}a_k^{\pm}-\nu^{\pm}a_k^{\pm}\right|&\lesssim \mathcal{F}(D)+\|\vec{\varepsilon}\|_{\mathcal{H}}^2,\label{apmes}
\\
\left|\frac{d}{dt}b_{k,l}+2\alpha b_{k,l}\right|&\lesssim \mathcal{F}(D)+\|\vec{\varepsilon}\|_{\mathcal{H}}^2.\label{bes}
\end{align}
\end{lemma}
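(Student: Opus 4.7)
The plan is to derive all three estimates by differentiating the relevant scalar quantities in time, substituting the evolution equation \eqref{epsilonsystem} for $\vec{\varepsilon}$, and then carefully isolating the leading-order interaction terms from the error terms. The control on $\vec{\varepsilon}$ from \eqref{sa}, the decay estimates on $Q$ and $\phi$, and the interaction bounds of Lemma \ref{ni} will be the main technical tools.

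First, I would derive the modulation equation \eqref{zes}. Since the last line of \eqref{sa} is an orthogonality condition that holds for all $t$, I can differentiate $\int (\eta + 2\alpha\varepsilon)\partial_l Q(\cdot - z_k) = 0$ in time. The derivatives of $\partial_l Q_k$ with respect to $t$ produce a term $-\dot{z}_k \cdot \int (\eta+2\alpha\varepsilon) \nabla \partial_l Q_k$, while substituting \eqref{epsilonsystem} for $\partial_t \eta$ and $\partial_t \varepsilon$ yields terms $\int(-\mathcal{L}\varepsilon - 2\alpha\eta + N_2 + 2\alpha\eta)\partial_l Q_k$. The $-2\alpha\eta$ terms cancel cleanly, which is precisely the motivation for the orthogonality condition chosen in \eqref{sa}. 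The key step is then to rewrite $\int \mathcal{L}\varepsilon \cdot \partial_l Q_k$ using $L_k \partial_l Q_k = 0$, where $L_k = -\Delta + 1 - p|Q_k|^{p-1}$ is the linearized operator at the $k$-th soliton; the difference $\mathcal{L} - L_k = -f'(Q_\Sigma) + p|Q_k|^{p-1}$ is supported on the interaction region and will be small. The dominant remaining piece comes from the term $\int f(Q_\Sigma)\partial_l Q_k - \sum_i \int f(Q_i)\partial_l Q_k$, whose leading order matches $-\sigma_i \sigma_k H(z_k-z_i)$ via the definition \eqref{Hdef}, producing the vector field $\sigma_i\sigma_k\mathcal{F}(|z_k-z_i|)(z_k-z_i)/|z_k-z_i|$ after dividing by $2\alpha\|\partial_1 Q\|_{L^2}^2$ and using the symmetry \eqref{Qsymmetry}. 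All remaining cross terms are either of quadratic order in $\vec{\varepsilon}$ (from $N_2$) or exponentially small of order $e^{-\theta_1 D}$ (from the higher-order interactions controlled by Lemma \ref{ni}, since $\theta_1 < p-1$).

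For \eqref{apmes} and \eqref{bes}, I differentiate the defining integrals $a_k^{\pm} = \int(\eta - \nu^{\mp}\varepsilon)\phi_k$ and $b_{k,l} = \int \eta\, \partial_l Q_k$. Substituting \eqref{epsilonsystem} and using $\frac{d}{dt}\phi_k = -\dot{z}_k\cdot\nabla\phi_k$ (respectively $\frac{d}{dt}\partial_l Q_k = -\dot{z}_k\cdot\nabla\partial_l Q_k$), the linear pieces reassemble into the dual evolution of the spectral projection: $L\phi = -\nu_0^2 \phi$ together with the factorization $-\nu^+\nu^- = \nu_0^2$ and $\nu^+ + \nu^- = -2\alpha$ produce the claimed eigenvalue relations $\nu^\pm a_k^\pm$ and $-2\alpha b_{k,l}$. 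The quadratic nonlinearity $N_2$ contributes the $\|\vec{\varepsilon}\|_{\mathcal{H}}^2$ term via a Taylor expansion of $f$ and Sobolev embedding, while the discrepancy between $\mathcal{L}$ and the localized linearized operators $L_k$, together with the ``source'' terms involving $R$ and $\dot{z}_k \cdot \nabla Q_k$ tested against $\phi_k$ or $\partial_l Q_k$, produce errors of size $\mathcal{F}(D)$ by the exponential decay of $\phi$ and the interaction estimate \eqref{si2}. For \eqref{zes}, the $\dot{z}_k$ feedback term coming from $N_1$ tested against $\phi_k$ and $\partial_l Q_k$ must be absorbed using that $\dot{z}_k$ itself is $O(\mathcal{F}(D) + \|\vec{\varepsilon}\|_{\mathcal{H}}^2)$ already bounded.

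The main obstacle will be separating genuine leading-order interaction forces from errors of comparable size. In particular, the $\mathcal{F}(D)$ scale and the $e^{-\theta_1 D}$ scale differ only by a logarithmic power and a factor of $e^{-(\theta_1-1)D}$, so one must be careful to identify the coefficient $c_\star$ precisely from the integral $\int Q^p e^{-x_1}$ via the asymptotic expansion \eqref{qes}, as done in the derivation of \eqref{Fesa}. A secondary difficulty is that in \eqref{zes} only the sum over $i\neq k$ is a true force; contributions from pairs $(i,j)$ with both $i,j\neq k$ must be shown to be of order $e^{-\theta_1 D}$ (not merely $\mathcal{F}(D)$), which follows from \eqref{si1} with a suitable choice of $m'$ exploiting $\theta_1 < 2$. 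Once these estimates are set up carefully, the remaining computations are routine applications of Lemma \ref{Lproperty}, Lemma \ref{ni}, and the smallness conditions in \eqref{sa}.
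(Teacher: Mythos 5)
Your overall strategy --- differentiate the orthogonality condition and the defining integrals of $a_k^{\pm}$, $b_{k,l}$, substitute \eqref{epsilonsystem}, extract the leading interaction force from $f(Q_{\sum})-\sum_i f(Q_i)$ via $H$ and \eqref{Hdef}, and absorb everything else using Lemma \ref{ni} --- is exactly the paper's. Your treatment of \eqref{apmes} and \eqref{bes} (the eigenvalue algebra $\nu^++\nu^-=-2\alpha$, $-\nu^+\nu^-=\nu_0^2$, and the replacement of $\mathcal{L}$ by the localized operator $L_k$) is correct as well.

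There is, however, a concrete error in your derivation of \eqref{zes}: you lose the very term that produces $\dot{z}_k$. Substituting ${\partial}_t\varepsilon=\eta+N_1$ into $2\alpha{\partial}_t\varepsilon$ gives $2\alpha\eta+2\alpha N_1$, so the differentiated orthogonality condition reads $0=\int(-\mathcal{L}\varepsilon+2\alpha N_1+N_2){\partial}_lQ_k-\int(\eta+2\alpha\varepsilon)\,\dot{z}_k\cdot\nabla{\partial}_lQ_k$; your displayed expression $\int(-\mathcal{L}\varepsilon-2\alpha\eta+N_2+2\alpha\eta){\partial}_lQ_k$ omits $2\alpha N_1$ altogether, and your closing paragraph then declares the whole $N_1$ contribution an error ``to be absorbed''. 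But the diagonal piece $2\alpha\int(\dot{z}_k\cdot\nabla Q_k){\partial}_lQ_k=2\alpha\sigma_k\dot{z}_{k,l}\|{\partial}_1Q\|_{L^2}^2$ (computed with \eqref{Qsymmetry}) is precisely the leading term on the left-hand side of \eqref{zes} --- it is the reason the normalization $\mathcal{F}=g/(2\alpha\|{\partial}_1Q\|_{L^2}^2)$ that you yourself invoke appears at all. Only the off-diagonal pieces $i\neq k$, of size $e^{-\frac{3}{4}D}\sum_i|\dot{z}_i|$, are errors, and those (together with the term $O(|\dot{z}_k|\,\|\vec{\varepsilon}\|_{\mathcal{H}})$ from differentiating ${\partial}_lQ_k$) are absorbed by first deducing $\sum_i|\dot{z}_i|\lesssim\mathcal{F}(D)+\|\vec{\varepsilon}\|_{\mathcal{H}}^2$ from the identity itself --- a self-improvement step, not an ``already bounded'' fact one may quote in advance. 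As literally written, your computation yields $0=\text{force}+\text{small}$ with no $\dot{z}_k$ left, so \eqref{zes} does not follow; once the $2\alpha N_1$ term is restored and its diagonal part identified as the main term, the rest of your plan goes through as in the paper.
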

\begin{proof}
This proof rewrites \cite[Section3, Section4]{IN} in simplified terms. Differentiating the equation $\int (\eta+2\alpha\varepsilon){\partial}_lQ(\cdot-z_k)=0$, we obtain
\begin{align*}
0&=\int({\partial}_t\eta+2\alpha {\partial}_t\varepsilon){\partial}_lQ(\cdot-z_k)-\int(\eta+2\alpha\varepsilon)\dot{z}_k\cdot \nabla({\partial}_lQ_k).
\end{align*}
In particular, we have
\begin{align*}
\int ({\partial}_t\eta+2\alpha {\partial}_t\varepsilon){\partial}_lQ(\cdot-z_k)&=\int (-\mathcal{L}\varepsilon+2\alpha N_1+N_2){\partial}_lQ(\cdot-z_k).
\end{align*}
Here, we estimate the right-hand side. First, we have
\begin{align*}
\int-\mathcal{L}\varepsilon{\partial}_lQ(\cdot-z_k)&=\int \varepsilon (f^{\prime}(Q_{\sum})-f^{\prime}(Q_k)){\partial}_lQ(\cdot-z_k). 
\end{align*}
By considering the Taylor expansion, we have
\begin{align*}
\left| (f^{\prime}(Q_{\sum})-f^{\prime}(Q_k))({\partial}_lQ(\cdot-z_k))\right|\lesssim \sum_{1\leq i\leq K,\ i\neq k}(|Q_k|^{p-1}|Q_i|+|Q_i|^{p-1}|Q_k|).
\end{align*}
Therefore we have
\begin{align}\label{zespart1}
\left|\int \varepsilon (f^{\prime}(Q_{\sum})-f^{\prime}(Q_k)){\partial}_lQ(\cdot-z_k)\right|\lesssim \|\vec{\varepsilon}\|_{\mathcal{H}}\mathcal{F}(D)\lesssim \|\vec{\varepsilon}\|_{\mathcal{H}}^2+\mathcal{F}(D)^2.
\end{align}
Second, by direct computation we obtain
\begin{align}\label{zespart2}
\begin{aligned}
\int 2\alpha N_1{\partial}_lQ(\cdot-z_k)&=2\alpha \sum_{i=1}^K \int  (\dot{z}_i\cdot \nabla Q_i){\partial}_lQ(\cdot-z_k)
\\
&=2\alpha \sigma_k \dot{z}_{k,l}\|{\partial}_1Q\|_{L^2}^2+O\left(e^{-\frac{3}{4}D} \left(\sum_{i=1}^K|\dot{z}_i|\right)\right).
\end{aligned}
\end{align}
Note that we obtain \eqref{zespart2} by using \eqref{Qsymmetry} and Lemma \ref{ni}. Next, we estimate $\int N_2{\partial}_lQ(\cdot-z_k)$. $N_2$ sarisfies
\begin{align*}
N_2-R=(f(u)-f(Q_{\sum})-f^{\prime}(Q_{\sum})\varepsilon)+\left(f(Q_{\sum})-\sum_{i=1}^Kf(Q_i)-R\right).
\end{align*}
Then we obtain
\begin{align*}
\left|f(u)-f(Q_{\sum})-f^{\prime}(Q_{\sum})\varepsilon\right|\lesssim |\varepsilon|^2,
\end{align*}
and
\begin{align*}
|f(Q_{\sum})-\sum_{i=1}^Kf(Q_i)-R|\lesssim e^{-\theta D}.
\end{align*}
Therefore we have
\begin{align}\label{zespart3}
\left|\int (N_2-R){\partial}_lQ(\cdot-z_k)\right|\lesssim e^{-\theta D}+\|\vec{\varepsilon}\|_{\mathcal{H}}^2.
\end{align}
In addition, we have
\begin{align}\label{zespart4}
\begin{aligned}
\int R{\partial}_lQ(\cdot-z_k)&=\sum_{i=1}^K\int p|Q_k|^{p-1}{\partial}_lQ(\cdot-z_k)Q_i+O(e^{-\theta D})
\\
&=\sum_{1\leq i\leq K,\ i\neq k}2\alpha\sigma_i \|{\partial}_1Q\|_{L^2}^2\mathcal{F}(|z_k-z_i|) \frac{z_{k,l}-z_{i,l}}{|z_k-z_i|}+O(e^{-\theta D}).
\end{aligned}
\end{align}
By \eqref{zespart1}-\eqref{zespart4}, we have
\begin{align*}
0&=2\alpha\|{\partial}_1Q\|_{L^2}^2\left( \sigma_k\dot{z}_{k,l}+\sum_{1\leq i\leq K,\ i\neq k}\sigma_i \mathcal{F}(|z_k-z_i|) \frac{z_{k,l}-z_{i,l}}{|z_k-z_i|}\right)
\\
&\quad +O\left(e^{-\theta D}+\|\vec{\varepsilon}\|_{\mathcal{H}}^2+\left(\sum_{i=1}^K|\dot{z}_i|\right)\|\vec{\varepsilon}\|_{\mathcal{H}}\right).
\end{align*}
In particular, $z$ satisfies
\begin{align*}
\sum_{i=1}^K |\dot{z}_i|\lesssim \mathcal{F}(D)+\|\vec{\varepsilon}\|_{\mathcal{H}}^2,
\end{align*}
and therefore we obtain \eqref{zes}.

Next, we estimate \eqref{apmes} and \eqref{bes}. By Lemma \ref{ni}, we have
\begin{align*}
\frac{d}{dt}a_k^{\pm}&=\int ({\partial}_t\eta-\nu^{\mp}{\partial}_t\varepsilon)\phi_k+O(\mathcal{F}(D)+\|\vec{\varepsilon}\|_{\mathcal{H}}^2)
\\
&=\int(-\mathcal{L}\varepsilon-2\alpha \eta-\nu^{\mp}\eta)\phi_k+O(\mathcal{F}(D)+\|\vec{\varepsilon}\|_{\mathcal{H}}^2)
\\
&=\nu^{\pm}a_k^{\pm}+\int (f^{\prime}(Q_{\sum})-f^{\prime}(Q_k))\phi_k+O(\mathcal{F}(D)+\|\vec{\varepsilon}\|_{\mathcal{H}}^2)
\\
&=\nu^{\pm}a_k^{\pm}+O(\mathcal{F}(D)+\|\vec{\varepsilon}\|_{\mathcal{H}}^2),
\end{align*}
and
\begin{align*}
\frac{d}{dt}b_{k,l}&=\int {\partial}_t\eta {\partial}_lQ_k+O(\mathcal{F}(D)+\|\vec{\varepsilon}\|_{\mathcal{H}}^2)
\\
&=-2\alpha b_{k,l}+\int (f^{\prime}(Q_{\sum})-f^{\prime}(Q_k)){\partial}_lQ_k+O(\mathcal{F}(D)+\|\vec{\varepsilon}\|_{\mathcal{H}}^2)
\\
&=-2\alpha b_{k,l}+O(\mathcal{F}(D)+\|\vec{\varepsilon}\|_{\mathcal{H}}^2).
\end{align*}
Hence we complete the proof.
\end{proof}
Since $\int (\eta+2\alpha\varepsilon){\partial}_lQ(\cdot-z_k)=0$ holds, we have for $1\leq k\leq K$ and $1\leq l\leq d$,
\begin{align*}
\langle \varepsilon, \phi_k \rangle=\frac{a_k^+-a_k^-}{\nu^+-\nu^-},\ \langle \varepsilon,{\partial}_lQ_k\rangle= -\frac{b_{k,l}}{2\alpha}.
\end{align*}
Therefore, we obtain
\begin{align}\label{coersim}
\sum_{k=1}^K\left(|\langle \varepsilon,\phi_k\rangle|+\left(\sum_{l=1}^d|\langle \varepsilon,{\partial}_lQ_k\rangle|\right) \right)\sim \sum_{k=1}^K\left( |a_k^+|+|a_k^-|+\left(\sum_{l=1}^d|b_{k,l}|\right)\right).
\end{align}

\subsection{Energy estimates}

In this subsection, we assume that $\vec{u}$ is a solution of \eqref{DNKG} satisfying \eqref{sa} for some $\sigma,\ z$, and $0<\delta\ll 1$. For $\mu>0$ sufficiently small to be chosen, we denote $\rho=2\alpha-\mu$. Furthermore, we define $\mathcal{E}$ as
\begin{align*}
\mathcal{E}=\frac{1}{2}\int \left(|\nabla\varepsilon|^2+(1-\rho \mu)\varepsilon^2+(\eta+\mu \varepsilon)^2-2(F(u)-F(Q_{\sum})-f(Q_{\sum})\varepsilon)\right),
\end{align*}
where $F(u)=\frac{1}{p+1}|u|^{p+1}$. First, we introduce the coercivity of $\mathcal{E}$.
\begin{lemma}\label{enelem}
There exist $\mu>0$,\ $\tilde{C}_1>0,\ \tilde{C}_2>0,\ \tilde{C}_3>0$, and $\delta_1>0$ such that the following holds. Let $\vec{u}$ be a solution of \eqref{DNKG} satisfying \eqref{sa} for some $\sigma,\ z$, and $0<\delta<\delta_1$. Then, we have
 \begin{align}\label{Esiml}
\tilde{C}_2\|\vec{\varepsilon}\|_{\mathcal{H}}^2\leq \mathcal{E}+\tilde{C}_1\sum_{k=1}^K\left( |a_k^+|^2+|a_k^-|^2+(\sum_{l=1}^d|b_{k,l}|^2)\right)\leq\frac{1}{\tilde{C}_2}\|\vec{\varepsilon}\|_{\mathcal{H}}^2,
\end{align}
and 
\begin{align}\label{Edecay}
\frac{d}{dt}\mathcal{E}+2\mu \mathcal{E}\leq \tilde{C}_3\left( \|\vec{\varepsilon}\|_{\mathcal{H}}^3+\|\vec{\varepsilon}\|_{\mathcal{H}}\mathcal{F}(D)\right).
\end{align}
\end{lemma}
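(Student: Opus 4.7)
I treat \eqref{Esiml} and \eqref{Edecay} separately; $\mu$ will be taken small, in particular $\mu<\alpha$ and small relative to the coercivity constant produced below. The upper bound in \eqref{Esiml} follows by Taylor expansion $F(u)-F(Q_{\sum})-f(Q_{\sum})\varepsilon = \tfrac12 f'(Q_{\sum})\varepsilon^2 + R$ with $|R|\lesssim |\varepsilon|^3+|\varepsilon|^{p+1}$, Sobolev, and the trivial bound $|a_k^\pm|^2+|b_{k,l}|^2\lesssim \|\vec\varepsilon\|_{\mathcal{H}}^2$.

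\textbf{Lower bound of \eqref{Esiml}.} Rewriting
\[
\mathcal{E} = \tfrac12\int\varepsilon\mathcal{L}\varepsilon + \tfrac12\int(\eta+\mu\varepsilon)^2 - \tfrac{\rho\mu}{2}\int\varepsilon^2 - \int R,
\]
the heart of the matter is the multi-soliton coercivity of $\mathcal{L}$:
\[
\int\varepsilon\mathcal{L}\varepsilon \geq c\|\varepsilon\|_{H^1}^2 - \tfrac{1}{c}\sum_{k=1}^K\Bigl(\langle\varepsilon,\phi_k\rangle^2 + \sum_{l=1}^d\langle\varepsilon,\partial_l Q_k\rangle^2\Bigr).
\]
I would derive this by localizing Lemma \ref{Lproperty}(2): pick smooth cutoffs $\chi_k$ centered at $z_k$ at scale $D/3$ and an exterior cutoff $\chi_\infty$, apply the lemma to each translated $\chi_k \varepsilon$, and use positivity of $-\Delta+1$ on $\chi_\infty \varepsilon$. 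Commutator errors and the potential-gap $f'(Q_{\sum})-\sum_k f'(Q_k)\chi_k^2$ are $o_\delta(1)\|\varepsilon\|_{H^1}^2$ by Lemma \ref{ni}. Combined with $\int(\eta+\mu\varepsilon)^2\geq \tfrac12\int\eta^2 - \mu^2\int\varepsilon^2$, the relations \eqref{coersim} converting scalar products to $a_k^\pm, b_{k,l}$, and small $\mu,\delta_1$, this yields the lower bound.

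\textbf{Dissipation \eqref{Edecay}.} I would differentiate $\mathcal{E}$ and substitute \eqref{epsilonsystem}. The terms free of $N_1, N_2$, after using $-\int\eta(f(u)-f(Q_{\sum}))$ arising from $-\partial_t\int(F(u)-F(Q_{\sum})-f(Q_{\sum})\varepsilon)$ to cancel the cross-term $\int\eta f'(Q_{\sum})\varepsilon$, reduce to
\[
\frac{d\mathcal{E}}{dt}\Big|_{\textup{quad}} = -\mu\int\varepsilon\mathcal{L}\varepsilon - \rho\int\eta^2 - 2\mu\rho\int\varepsilon\eta,
\]
and a direct comparison with the quadratic part of $-2\mu\mathcal{E}$ gives the clean identity
\[
\frac{d\mathcal{E}}{dt}\Big|_{\textup{quad}} + 2\mu\mathcal{E}^{\textup{quad}} = (\mu-\rho)\int(\eta+\mu\varepsilon)^2 = -2(\alpha-\mu)\int(\eta+\mu\varepsilon)^2 \leq 0,
\]
which is the essential gain from damping and explains the tuning $\rho=2\alpha-\mu$ in the definition of $\mathcal{E}$.

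\textbf{Errors and main obstacle.} The remaining contributions split in two groups. (i) $N_1 = \sum_k \dot z_k\cdot\nabla Q_k$ terms: self-adjointness moves $\mathcal{L}$ onto $N_1$, and $\mathcal{L}\nabla Q_k = (p|Q_k|^{p-1}-f'(Q_{\sum}))\nabla Q_k$ is $L^2$-small by Lemma \ref{ni}; together with $|\dot z|\lesssim \mathcal{F}(D)+\|\vec\varepsilon\|_{\mathcal{H}}^2$ from Lemma \ref{expansionlem} these contribute $\lesssim \|\vec\varepsilon\|_{\mathcal{H}}\mathcal{F}(D)+\|\vec\varepsilon\|_{\mathcal{H}}^3$. (ii) $N_2$ terms: split as $(f(u)-f(Q_{\sum})-f'(Q_{\sum})\varepsilon)+(f(Q_{\sum})-\sum_k f(Q_k))$; the first is pointwise $\lesssim |\varepsilon|^2+|\varepsilon|^p$ giving cubic Sobolev errors, the second is $\lesssim \mathcal{F}(D)$ in $L^2$ by Lemma \ref{ni}, contributing $\|\vec\varepsilon\|_{\mathcal{H}}\mathcal{F}(D)$. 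Summing gives \eqref{Edecay}. The hardest part will be the localized coercivity of $\mathcal{L}$: Lemma \ref{Lproperty}(2) is a single-soliton statement, and gluing its conclusions via cutoffs without inflating cross-terms requires careful choice of scale and use of the interaction bounds in Lemma \ref{ni}; once this is in hand, the dissipation is a routine algebraic cancellation plus interaction estimates.
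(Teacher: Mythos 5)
Your proposal is correct and follows essentially the same route as the paper: localized coercivity of $\mathcal{L}$ via cutoffs at scale $\sim D$ glued through Lemma \ref{Lproperty}(2) and the interaction bounds, converted to the $a_k^{\pm},b_{k,l}$ via \eqref{coersim}, together with the algebraic identity $\frac{d}{dt}\mathcal{E}+2\mu\mathcal{E}=-2(\alpha-\mu)\|\eta+\mu\varepsilon\|_{L^2}^2+O\left(\|\vec{\varepsilon}\|_{\mathcal{H}}(\|\vec{\varepsilon}\|_{\mathcal{H}}^2+\mathcal{F}(D))\right)$ exploiting $\rho=2\alpha-\mu$, which is exactly the paper's computation (after \cite{CMYZ}). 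Only cosmetic differences appear (cutoff scale $D/3$ vs.\ $D/4$, and reusing the letter $R$ already taken in the paper), so no changes are needed.
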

\begin{remark}
$\mu>0$,\ $\tilde{C}_1>0,\ \tilde{C}_2>0,\ \tilde{C}_3>0$, and $\delta_1>0$ only depend on $d,\ \alpha,\ p$.
\end{remark}

\begin{proof}
First, we estimate $\langle \mathcal{L}\varepsilon,\varepsilon\rangle$. Let $\chi$ be a smooth function satisfying the following properties:
\begin{align*}
\chi=1\ \mbox{on}\ [0,1],\ \ \chi=0\ \mbox{on}\ [2,\infty),\ \ \chi^{\prime}\leq 0 \ \mbox{on}\ \mathbb{R}.
\end{align*}
For $\lambda=\frac{D}{4}\gg 1$, let
\begin{align*}
\chi_k(x)=\chi\left( \frac{|x-z_k|}{\lambda}\right),\ \chi_c(x)=\left(1-\sum_{k=1}^K\chi_k(x)^2\right)^{\frac{1}{2}}
\end{align*}
for $1\leq k\leq K$. Furthermore, we define
\begin{align}
\varepsilon_k=\varepsilon \chi_k,\ \ \varepsilon_c=\varepsilon\chi_c\label{cutlem1}
\end{align}
for $1\leq k\leq K$. Then, we have
\begin{align}
\varepsilon^2=\left(\sum_{k=1}^K \varepsilon_k^2\right)+\varepsilon_c^2.
\end{align}
In addition, we have
\begin{align}
\|\varepsilon\|_{H^1}^2=\left(\sum_{k=1}^K\|\varepsilon_k\|_{H^1}^2\right)+\|\varepsilon_c\|_{H^1}^2+O\left(\frac{\|\varepsilon\|_{H^1}^2}{D}\right).
\end{align}
We also have for $1\leq k\leq K$ and $1\leq l\leq d$,
\begin{align}
\left| \langle \varepsilon-\varepsilon_k,{\partial}_lQ_k\rangle\right|+\left|\langle \varepsilon-\varepsilon_k,\phi_k\rangle\right|\lesssim e^{-\frac{D}{4}}\|\varepsilon\|_{L^2},
\\
\left|\langle f^{\prime}(Q_{\sum})-f^{\prime}(Q_k),\varepsilon_k^2\rangle\right|\lesssim e^{-\theta D}\|\varepsilon\|_{L^2},
\\
\langle \mathcal{L}\varepsilon_c,\varepsilon_c\rangle=\|\varepsilon_c\|_{L^2}^2+O(e^{-\frac{D}{4}}\|\varepsilon\|_{L^2}),\label{cutlem2}
\end{align}
where $0<\theta<\min{(p-2,\frac{1}{4})}$. By \eqref{cutlem1}-\eqref{cutlem2}, we obtain
\begin{align}\label{cutes1}
\begin{aligned}
\langle \mathcal{L}\varepsilon,\varepsilon\rangle&=\left(\sum_{k=1}^K \langle \mathcal{L}\varepsilon_k,\varepsilon_k\rangle\right)+\langle \mathcal{L}\varepsilon_c,\varepsilon_c\rangle+O\left(\frac{\|\varepsilon\|_{H^1}^2}{D}\right)
\\
&=\left(\sum_{k=1}^K\langle (-\Delta+1-f^{\prime}(Q_k))\varepsilon_k,\varepsilon_k\rangle \right)+\|\varepsilon_c\|_{L^2}^2+O\left(\frac{\|\varepsilon\|_{H^1}^2}{D}\right),
\end{aligned}
\end{align}
and
\begin{align}\label{cutes2}
\begin{aligned}
\sum_{k=1}^K\left(|\langle \varepsilon,\phi_k\rangle|+\left(\sum_{l=1}^d|\langle \varepsilon,{\partial}_lQ_k\rangle|\right) \right)&=\sum_{k=1}^K\left(|\langle \varepsilon_k,\phi_k\rangle|+\left(\sum_{l=1}^d|\langle \varepsilon_k,{\partial}_lQ_k\rangle|\right) \right)
\\
&\quad +O(e^{-\frac{D}{4}}\|\varepsilon\|_{L^2}).
\end{aligned}
\end{align}
By Lemma \ref{Lproperty}, we have for $1\leq k\leq K$,
\begin{align}\label{cutes3}
\langle (-\Delta+1-f^{\prime}(Q_k))\varepsilon_k,\varepsilon_k\rangle\geq c_L\|\varepsilon_k\|_{L^2}^2-\frac{1}{c_L}\left(\langle \varepsilon_k,\phi_k\rangle^2+\sum_{l=1}^d\langle\varepsilon_k,{\partial}_lQ_k\rangle^2\right).
\end{align}
By \eqref{coersim} and \eqref{cutes1}-\eqref{cutes3}, there exists $\tilde{\delta}>0$ such that the following holds: when $\vec{u}$ satisfies \eqref{sa} for some $z,\ \sigma$, and $0<\delta<\tilde{\delta}$, there exists $\hat{C}>0$ depending only on $d,\ \alpha,\ p$ such that 
\begin{align}\label{Lsumsim}
\|\varepsilon\|_{H^1}^2\sim \langle \mathcal{L}\varepsilon,\varepsilon\rangle+\hat{C} \sum_{k=1}^K\left( |a_k^+|^2+|a_k^-|^2+\left(\sum_{l=1}^d|b_{k,l}|^2\right)\right).
\end{align}
Next, by direct computation, we have
\begin{align}\label{Eespart1}
\left|\int(F(u)-F(Q_{\sum})-f(Q_{\sum})\varepsilon-\frac{1}{2}f^{\prime}(Q_{\sum})\varepsilon^2)\right|\lesssim \|\vec{\varepsilon}\|_{\mathcal{H}}^3.
\end{align}
Furthermore, we have
\begin{align}\label{Eespart2}
\begin{aligned}
&\int  (|\nabla\varepsilon|^2+(1-\rho \mu)\varepsilon^2+(\eta+\mu \varepsilon)^2-f^{\prime}(Q_{\sum})\varepsilon^2)
\\
&\quad=\langle \mathcal{L}\varepsilon,\varepsilon\rangle +\|\eta\|_{L^2}^2+2\mu \langle \varepsilon,\eta\rangle -\mu(\rho-\mu)\|\varepsilon\|_{L^2}^2.
\end{aligned}
\end{align}
Therefore by \eqref{Lsumsim}, \eqref{Eespart1}, \eqref{Eespart2}, we obtain \eqref{Esiml} if $\mu>0$ and $\tilde{\delta}>0$ are sufficiently small.

Next, we estimate \eqref{Edecay}. This proof is due to \cite[Lemma 2.6]{CMYZ}. In this paper, the choice of the center is different from \cite{CMYZ}. Therefore, we give the proof.

Differentiating $\mathcal{E}$, we have
\begin{align*}
\frac{d}{dt}\mathcal{E}=\int \left({\partial}_t\varepsilon(\mathcal{L}\varepsilon-\rho \mu \varepsilon)+({\partial}_t\eta+\mu {\partial}_t\varepsilon)(\eta+\mu\varepsilon)\right)+O(\|\vec{\varepsilon}\|_{\mathcal{H}}(\|\vec{\varepsilon}\|_{\mathcal{H}}^2+\mathcal{F}(D))).
%\\
%&\quad-\int {\partial}_t\varepsilon(f(u)-f(Q_{\sum})-f^{\prime}(Q_{\sum})\varepsilon)+O(\|\vec{\varepsilon}\|_{\mathcal{H}}(\|\vec{\varepsilon}\|_{\mathcal{H}}^2+\mathcal{F}(D))).
\end{align*}
In particular, by \eqref{Eespart1} and \eqref{Eespart2}, we have
\begin{align*}
\left|\mathcal{E}-\frac{1}{2}(\langle \mathcal{L}\varepsilon,\varepsilon\rangle +\|\eta\|_{L^2}^2+2\mu \langle \varepsilon,\eta\rangle -\mu(\rho-\mu)\|\varepsilon\|_{L^2}^2)\right|\lesssim \|\vec{\varepsilon}\|_{\mathcal{H}}^3,
\end{align*}
and
\begin{align*}
\int ({\partial}_t\eta+\mu {\partial}_t\varepsilon)(\eta+\mu\varepsilon)&=-\int \mathcal{L}\varepsilon({\partial}_t\varepsilon+\mu \varepsilon)+(2\alpha-\mu)\eta(\eta+\mu\varepsilon)
\\
&\quad+O(\|\vec{\varepsilon}\|_{\mathcal{H}}(\|\vec{\varepsilon}\|_{\mathcal{H}}^2+\mathcal{F}(D))).
\end{align*}
Gathering these arguments, we have
\begin{align*}
\frac{d}{dt}\mathcal{E}=-2\mu \mathcal{E}-2(\alpha-\mu)\|\eta+\mu \varepsilon\|_{L^2}^2+O(\|\vec{\varepsilon}\|_{\mathcal{H}}(\|\vec{\varepsilon}\|_{\mathcal{H}}^2+\mathcal{F}(D))),
\end{align*}
which implies \eqref{Edecay}.
\end{proof}
In the following discussion, we choose $\mu>0$ as Lemma \ref{enelem}. Here, we consider the Taylor expansion of the Hamiltonian. We define $\theta_{\ast}>0$ as a constant satisfying
\begin{align}\label{thetaast}
\theta_{\ast}=\frac{4}{3-\theta_{\star}}>2.
\end{align}
\begin{lemma}\label{Hamilem}
Let $\vec{u}$ be a solution of \eqref{DNKG} satisfying \eqref{sa} for some $\sigma,\ z$, and $0<\delta<\delta_1$. Then there exists $c_{\ast}>0$ such that
\begin{align}\label{Hamieq}
\begin{aligned}
E(\vec{u})-KE(Q,0)&=-c_{\ast}\sum_{1\leq i<j\leq K}\sigma_i\sigma_j\mathcal{F}(|z_i-z_j|)+\frac{1}{2}(\langle \mathcal{L}\varepsilon,\varepsilon\rangle+\|\eta\|_{L^2}^2)
\\
&\quad+O(D^{-1}\mathcal{F}(D)+\|\vec{\varepsilon}\|_{\mathcal{H}}^{\theta_{\ast}}).
\end{aligned}
\end{align}
Furthermore, there exists $C_E>0$ such that
\begin{align}\label{Hamilower}
\mathcal{E}\leq C_E\left(E(\vec{u})-KE(Q,0)+\sum_{k=1}^K\left( |a_k^+|^2+|a_k^-|^2+(\sum_{l=1}^d|b_{k,l}|^2)\right)+\mathcal{F}(D)\right).
\end{align}
\end{lemma}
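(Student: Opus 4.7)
The plan is to decompose $E(\vec{u}) - KE(Q,0) = A + B$ with $A := E(\vec{u}) - E(Q_{\sum}, 0)$ and $B := E(Q_{\sum}, 0) - KE(Q,0)$, and to analyze the fluctuation part $A$ by Taylor expansion in $\vec{\varepsilon}$ and the static interaction $B$ by explicit asymptotics in the separation.

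For $A$, I use $\vec{u} = \vec{Q}_{\sum} + \vec{\varepsilon}$ together with the identity $(-\Delta+1)Q_{\sum} = \sum_k f(Q_k)$, which follows from \eqref{Qode} and the sign relation $f(\sigma_k Q) = \sigma_k Q^p$. Integration by parts and a Taylor expansion of $F$ around $Q_{\sum}$ give
\begin{align*}
A = \int\Bigl(\sum_k f(Q_k) - f(Q_{\sum})\Bigr)\varepsilon + \frac{1}{2}\bigl(\langle\mathcal{L}\varepsilon,\varepsilon\rangle + \|\eta\|_{L^2}^2\bigr) - \int G(\varepsilon),
\end{align*}
where the Taylor remainder satisfies $|G(\varepsilon)| \lesssim |\varepsilon|^3 + |\varepsilon|^{p+1}$. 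Sobolev embedding bounds $|\int G(\varepsilon)| \lesssim \|\vec{\varepsilon}\|_{\mathcal{H}}^3 + \|\vec{\varepsilon}\|_{\mathcal{H}}^{p+1} \lesssim \|\vec{\varepsilon}\|_{\mathcal{H}}^{\theta_2}$ for small $\|\vec{\varepsilon}\|_{\mathcal{H}}$. For the linear term I use the pointwise expansion $f(Q_{\sum}) - \sum_k f(Q_k) = R + O(e^{-\theta_1 D})$ combined with Lemma \ref{ni} to obtain $|\int R\,\varepsilon| \lesssim \|\vec{\varepsilon}\|_{\mathcal{H}}\mathcal{F}(D)$, and Young's inequality with exponents $\theta_2$ and $\theta_2/(\theta_2-1)$ reduces this to $\|\vec{\varepsilon}\|_{\mathcal{H}}^{\theta_2} + D^{-1}\mathcal{F}(D)$ thanks to the exponential decay in \eqref{Fesa}.

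For $B$, integrating by parts against $(-\Delta+1)Q = Q^p$ gives $\frac{1}{2}(\|Q_{\sum}\|_{H^1}^2 - K\|Q\|_{H^1}^2) = \sum_{i<j}\sigma_i\sigma_j\int Q^p(\cdot-z_i)Q(\cdot-z_j)$, while a cell-by-cell Taylor expansion of $|Q_{\sum}|^{p+1}$ around each $Q_i$, combined with Lemma \ref{ni} on the remainders, shows $\int F(Q_{\sum}) - K\int F(Q) = 2\sum_{i<j}\sigma_i\sigma_j\int Q^p(\cdot-z_i)Q(\cdot-z_j) + O(e^{-\theta_1 D})$. Subtracting yields $B = -\sum_{i<j}\sigma_i\sigma_j\int Q^p(\cdot-z_i)Q(\cdot-z_j) + O(e^{-\theta_1 D})$. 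The key technical step is the Laplace-type asymptotic
\begin{align*}
\int Q^p(x)Q(x - re_1)\,dx = c_\star\mathcal{F}(r) + O(r^{-1}\mathcal{F}(r)),
\end{align*}
obtained by writing $|x - re_1| = r - x_1 + O(|x|^2/r)$ for $|x|\ll r$ and inserting the pointwise decay \eqref{qes} of $q$; matching against \eqref{Fdef}--\eqref{Fesa} identifies the constant $c_\star$. This step is the source of the $O(D^{-1}\mathcal{F}(D))$ error in \eqref{Hamieq} and will be the hardest part of the argument.

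For \eqref{Hamilower}, a Taylor expansion of the nonlinear part of $\mathcal{E}$ around $Q_{\sum}$ gives $\mathcal{E} = \frac{1}{2}(\langle\mathcal{L}\varepsilon,\varepsilon\rangle + \|\eta\|_{L^2}^2) + O(\mu\|\vec{\varepsilon}\|_{\mathcal{H}}^2 + \|\vec{\varepsilon}\|_{\mathcal{H}}^3)$. Substituting \eqref{Hamieq} and using $|\sum\sigma_i\sigma_j\mathcal{F}(|z_i-z_j|)| \lesssim \mathcal{F}(D)$, $D^{-1}\mathcal{F}(D) \leq \mathcal{F}(D)$, and $\|\vec{\varepsilon}\|_{\mathcal{H}}^{\theta_2} \leq \delta^{\theta_2-2}\|\vec{\varepsilon}\|_{\mathcal{H}}^2$ (valid since $\theta_2 > 2$), I obtain $\mathcal{E} \leq [E(\vec{u}) - KE(Q,0)] + C\mathcal{F}(D) + C(\mu + \delta^{\theta_2-2})\|\vec{\varepsilon}\|_{\mathcal{H}}^2$. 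The upper half of \eqref{Esiml} then replaces $\|\vec{\varepsilon}\|_{\mathcal{H}}^2$ by $\mathcal{E} + \sum_k(|a_k^+|^2 + |a_k^-|^2 + \sum_l|b_{k,l}|^2)$ up to a fixed constant, and choosing $\mu$ and $\delta$ small enough that the resulting coefficient of $\mathcal{E}$ is strictly less than one lets me absorb it to the left and conclude.
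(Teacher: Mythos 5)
Your proposal is correct and follows essentially the same route as the paper: a second-order Taylor expansion of the energy around $Q_{\sum}$ producing the quadratic form $\frac{1}{2}(\langle\mathcal{L}\varepsilon,\varepsilon\rangle+\|\eta\|_{L^2}^2)$, interaction bounds of the type in Lemma \ref{ni} for the error terms, the Laplace-type asymptotic $\int Q^p(\cdot-z_i)Q(\cdot-z_j)\approx c\,\mathcal{F}(|z_i-z_j|)$ for the static part, and the same absorption argument via \eqref{Esiml} for \eqref{Hamilower}. If anything, your bookkeeping is more explicit than the paper's displayed identity (you carry the $H^1$ cross term $\sum_{i<j}\sigma_i\sigma_j\int Q^p(\cdot-z_i)Q(\cdot-z_j)$ and the linear-in-$\varepsilon$ term separately), which only affects the inessential value of the constant $c_\ast$.
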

\begin{proof}
By direct computation, we have
\begin{align*}
E(\vec{u})-KE(Q,0)&=\frac{1}{2}(\langle \mathcal{L}\varepsilon,\varepsilon\rangle+\|\eta\|_{L^2}^2)
\\
&\quad-\frac{1}{p+1}\int \left(|u|^{p+1}-|Q_{\sum}|^{p+1}-(p+1)f(Q_{\sum})\varepsilon-\frac{p+1}{2}f^{\prime}(Q_{\sum})\varepsilon^2\right)
\\
&\quad -\frac{1}{p+1}\int\left( |Q_{\sum}|^{p+1}-\sum_{k=1}^K |Q_k|^{p+1}\right)
\\
&\quad+\frac{1}{2}\int \left( f^{\prime}(Q_{\sum})-\sum_{k=1}^Kf^{\prime}(Q_k) \right)\varepsilon^2.
\end{align*}
By the Taylor expansion, we obtain
\begin{align*}
||u|^{p+1}-|Q_{\sum}|^{p+1}-(p+1)f(Q_{\sum})\varepsilon-\frac{p+1}{2}f^{\prime}(Q_{\sum})\varepsilon^2|&\lesssim |\varepsilon|^3,
\\
\left|\left( f^{\prime}(Q_{\sum})-\sum_{k=1}^Kf^{\prime}(Q_k) \right)\varepsilon^2\right|&\lesssim e^{-(\theta_{\star}-1) D}|\varepsilon|^2
\\
&\lesssim e^{-2D}+\|\vec{\varepsilon}\|_{\mathcal{H}}^{\theta_{\ast}}
\\
&\lesssim D^{-1}\mathcal{F}(D)+\|\vec{\varepsilon}\|_{\mathcal{H}}^{\theta_{\ast}}.
\end{align*}
Furthermore, we have
\begin{align*}
-\frac{1}{p+1}\int\left( |Q_{\sum}|^{p+1}-\sum_{k=1}^K |Q_k|^{p+1}\right)=-2\sum_{1\leq i<j\leq K}\sigma_i\sigma_j\int |Q_k|^p|Q_l|+O\left(D^{-1}\mathcal{F}(D) \right).
\end{align*}
Notably, by the definition of $\mathcal{F}$, there exists $c_1>0$ such that 
\begin{align*}
\left|\int |Q_i|^p|Q_j|-c_1\mathcal{F}(|z_i-z_j|)\right|\lesssim D^{-1}\mathcal{F}(D).
\end{align*}
Therefore we obtain \eqref{Hamieq} by choosing $c_{\ast}=2c_1$. \eqref{Hamilower} follows from \eqref{Lsumsim} and \eqref{Hamieq}, since $\vec{u}$ satisfies $E(\vec{u})-KE(Q,0)\geq 0$.
\end{proof}

\subsection{Estimate of the instability term}

Here, we estimate the instability term. Let $L>\tilde{C}_1$ be large to be chosen. We define $\mathcal{G}$ as
\begin{align}\label{Gdef}
\mathcal{G}=\mathcal{E}+L\sum_{k=1}^K\left(|a_k^-|^2+\left(\sum_{l=1}^d|b_{k,l}|^2\right)\right).
\end{align}
Furthermore, we define
\begin{align*}
|a^+|^2=\sum_{k=1}^K |a_k^+|^2.
\end{align*}
Then, by Lemma \ref{enelem}, when $\vec{u}$ satisfies \eqref{sa} for some $z,\ \sigma$, and $0<\delta<\delta_1$, we have
\begin{align}\label{Gdouchi}
\mathcal{G}+L|a^+|^2\sim \|\vec{\varepsilon}\|_{\mathcal{H}}^2. 
\end{align}
Here, we estimate $\mathcal{G}$.

\begin{lemma}\label{Glem}
There exist $L>0$ and $0<\delta_2<\delta_1$ and $\tau>0$ such that the following holds. Let $\vec{u}$ be a solution of \eqref{DNKG} satisfying \eqref{sa} for some $\sigma,\ z$, and $0<\delta<\delta_2$. Then we have for all $t\geq0$
\begin{align}\label{Ges1}
\frac{d}{dt}\mathcal{G}\leq -\tau\cdot \mathcal{G}+\frac{1}{\tau}\left(|a^+|^2+\mathcal{F}(D)^2\right).
\end{align}
\end{lemma}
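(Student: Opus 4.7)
The plan is to differentiate $\mathcal{G}$ and assemble a Lyapunov-type differential inequality by combining the energy dissipation \eqref{Edecay} with the spectral ODEs \eqref{apmes} and \eqref{bes}. Writing
\[
\frac{d}{dt}\mathcal{G}=\frac{d}{dt}\mathcal{E}+L\frac{d}{dt}\sum_{k=1}^K\Big(|a_k^-|^2+\sum_{l=1}^d|b_{k,l}|^2\Big),
\]
I would bound the first term by \eqref{Edecay} and handle the remaining quadratic forms by multiplying $\dot{a}_k^-$ and $\dot{b}_{k,l}$ by $2a_k^-$ and $2b_{k,l}$ respectively. The decisive sign information is that $\nu^-<0$ and $-2\alpha<0$, so Lemma \ref{expansionlem} yields
\[
\frac{d}{dt}|a_k^-|^2\le 2\nu^-|a_k^-|^2+C|a_k^-|\,(\mathcal{F}(D)+\|\vec{\varepsilon}\|_{\mathcal{H}}^2),
\]
and the analogous estimate for $|b_{k,l}|^2$ with coefficient $-4\alpha$. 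Young's inequality converts the linear-in-$|a_k^-|$ remainder into a small multiple of $|a_k^-|^2$ plus $C(\mathcal{F}(D)^2+\|\vec{\varepsilon}\|_{\mathcal{H}}^4)$, leaving a strictly negative coefficient $-2\tau'$ with $\tau'$ just below $\min(-\nu^-,2\alpha)$.

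Since Lemma \ref{enelem} holds for any sufficiently small $\mu>0$, I would shrink $\mu$ so that $\mu\le\tau'$, which makes the aggregate linear part satisfy $-2\mu\mathcal{E}-2L\tau'(|a^-|^2+\sum|b_{k,l}|^2)\le -2\mu\mathcal{G}$. To close the inequality I would then dispose of the leftover positive terms: the cubic $\tilde{C}_3\|\vec{\varepsilon}\|_{\mathcal{H}}^3$ is bounded by $\tilde{C}_3\delta\|\vec{\varepsilon}\|_{\mathcal{H}}^2$ using \eqref{sa}; the cross term $\tilde{C}_3\|\vec{\varepsilon}\|_{\mathcal{H}}\mathcal{F}(D)$ is split by Young into $\frac{\mu}{2}\|\vec{\varepsilon}\|_{\mathcal{H}}^2+C\mathcal{F}(D)^2$; and the quartic $\|\vec{\varepsilon}\|_{\mathcal{H}}^4\le\delta^2\|\vec{\varepsilon}\|_{\mathcal{H}}^2$ is likewise small. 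Invoking \eqref{Gdouchi}, every residual $\|\vec{\varepsilon}\|_{\mathcal{H}}^2$ becomes $\le C(\mathcal{G}+L|a^+|^2)$, so all the small-factor terms are absorbed into the $-2\mu\mathcal{G}$ at the price of a $|a^+|^2$ contribution on the right. Choosing first $L>\tilde{C}_1$ large enough so that \eqref{Gdouchi} is valid, then $\delta_2$ small enough to swallow the cubic factors, and finally $\tau$ depending on $L,\mu$, and the universal constants, one obtains \eqref{Ges1}.

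The delicate point is the ordering of constants: the Young losses in the ODEs for $a_k^-$ and $b_{k,l}$ force the effective dissipation to be strictly less than $\min(-\nu^-,2\alpha)$, and then $\mu$ must be shrunk below this rate without invalidating the coercivity proof of Lemma \ref{enelem}, which is why that lemma is stated with $\mu$ sufficiently small. The appearance of $|a^+|^2$ on the right is structural: it corresponds to the single exponentially growing direction associated with $\nu^+>0$, which cannot be controlled by a purely monotone functional and must be handled later by a topological or modulation argument.
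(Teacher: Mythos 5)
Your proposal is correct and follows essentially the same route as the paper: differentiate $\mathcal{G}$, combine \eqref{Edecay} with the ODEs \eqref{apmes}, \eqref{bes} (whose good signs $\nu^-<0$, $-2\alpha<0$ provide the dissipation), convert the residual $\|\vec{\varepsilon}\|_{\mathcal{H}}^2$ into $\mathcal{G}+L|a^+|^2$ via the coercivity \eqref{Esiml}/\eqref{Gdouchi}, and absorb the cubic and cross terms by Young's inequality and the smallness of $\delta$. The only cosmetic difference is that the paper keeps $\mu$ fixed and introduces a separate small constant $\epsilon_1$ in \eqref{epsilon1lower} rather than shrinking $\mu$ below $\min(-\nu^-,2\alpha)$, and your Young coefficient $\tfrac{\mu}{2}$ should really be taken small relative to the constant in \eqref{Gdouchi} — constant-chasing you already flag as the delicate point.
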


\begin{remark}
In Lemma \ref{Glem}, $L>0$ and $\delta_2>0$ and $\tau>0$ depend only on $d,\ p,\ \alpha$.

\end{remark}

\begin{proof}
First, by Lemma \ref{expansionlem}, we have for $1\leq k\leq K$,\ $1\leq l\leq d$, 
\begin{align}\label{abes}
\begin{aligned}
\left| \frac{d}{dt}(a_k^-)^2-2\nu^-(a_k^-)^2 \right|&\lesssim \|\vec{\varepsilon}\|_{\mathcal{H}}^3+\mathcal{F}(D)\|\vec{\varepsilon}\|_{\mathcal{H}},
\\
\left| \frac{d}{dt}(b_{k,l})^2+4\alpha (b_{k,l})^2\right|&\lesssim \|\vec{\varepsilon}\|_{\mathcal{H}}^3+\mathcal{F}(D)\|\vec{\varepsilon}\|_{\mathcal{H}}.
\end{aligned}
\end{align}
By \eqref{Edecay} and \eqref{abes}, we have
\begin{align}\label{Glesssim}
\frac{d}{dt}\mathcal{G}+2\mu \mathcal{E}+2L\sum_{k=1}^K\left( -\nu^-|a_k^-|^2+(\sum_{l=1}^d 2\alpha |b_{k,l}|^2)\right)\lesssim \|\vec{\varepsilon}\|_{\mathcal{H}}^3+\mathcal{F}(D)\|\vec{\varepsilon}\|_{\mathcal{H}}.
\end{align}
Here, we choose $L>0$ and $\epsilon_1>0$ satisfying 
\begin{align}\label{epsilon1lower}
2\mu \mathcal{E}+2L\sum_{k=1}^K\left( -\nu^-|a_k^-|^2+(\sum_{l=1}^d 2\alpha |b_{k,l}|^2)\right)+\epsilon_1^{-1}|a^+|^2\geq \epsilon_1(\mathcal{G}+\|\vec{\varepsilon}\|_{\mathcal{H}}^2).
\end{align}
Notably, \eqref{epsilon1lower} holds true by \eqref{Esiml} and \eqref{Gdouchi}. We note that $L>0$ and $\epsilon_1>0$ depend only on $d,\ p,\ \alpha$. By \eqref{Glesssim} and \eqref{epsilon1lower}, there exists $C_1>0$ such that 
\begin{align}\label{Gespart1}
\frac{d}{dt}\mathcal{G}+\epsilon_1\mathcal{G}+\epsilon_1\|\vec{\varepsilon}\|_{\mathcal{H}}^2-\epsilon_1^{-1}|a^+|^2\leq C_1\|\vec{\varepsilon}\|_{\mathcal{H}}^3+C_1\mathcal{F}(D)\|\vec{\varepsilon}\|_{\mathcal{H}}.
\end{align}
 By the smallness of $\delta_2>0$ and the arithmetic-geometric mean inequality, we have
\begin{align}\label{Gespart2}
C_1\|\vec{\varepsilon}\|_{\mathcal{H}}^3+C_1\mathcal{F}(D)\|\vec{\varepsilon}\|_{\mathcal{H}}\leq \frac{1}{2}\epsilon_1\|\vec{\varepsilon}\|_{\mathcal{H}}^2+\left(\frac{\epsilon_1}{2}\|\vec{\varepsilon}\|_{\mathcal{H}}^2+\frac{C_1^2}{2\epsilon_1}\mathcal{F}(D)^2\right).
\end{align}
By \eqref{Gespart1} and \eqref{Gespart2}, we obtain
\begin{align*}
\frac{d}{dt}\mathcal{G}+\epsilon_1\mathcal{G}\leq \frac{1}{\epsilon_1}|a^+|^2+\frac{C_1^2}{2\epsilon_1}\mathcal{F}(D)^2.
\end{align*}
Therefore by choosing
\begin{align*}
\tau=\min{\left(\epsilon_1, \frac{2\epsilon_1}{C_1^2}\right)},
\end{align*}
we obtain \eqref{Ges1}. It is clear that $\tau>0$ and $\delta_2>0$ depend only on $d,\ p,\ \alpha$, and we complete the proof. 
\end{proof}

Now, we estimate $|a^+|$. 
\begin{lemma}\label{a+lem1}
Let $\vec{u}$ be a solution of \eqref{DNKG} satisfying \eqref{sa} for some $\sigma,\ z$, and $\delta>0$. Then we have
\begin{align}\label{lima}
\lim_{t\to\infty} \frac{|a^+(t)|^2}{\|\vec{\varepsilon}(t)\|_{\mathcal{H}}^2+\mathcal{F}(D(t))}=0.
\end{align}
\end{lemma}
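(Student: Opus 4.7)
The plan is to argue by contradiction. Suppose there exist $\lambda_0>0$ and $t_n\to\infty$ with $|a^+(t_n)|^2 \geq \lambda_0 f(t_n)$, where $f(t) := \|\vec\varepsilon(t)\|_{\mathcal{H}}^2 + \mathcal{F}(D(t))$; note $f(t)\to 0$ by \eqref{sa}. The central tool is the backward integration of the unstable ODE. From \eqref{apmes}, $\dot a_k^+ - \nu^+ a_k^+ = h_k$ with $|h_k(s)|\lesssim f(s)$. Since $\nu^+>0$ and $a_k^+(t)\to 0$ (because $\vec\varepsilon(t)\to 0$), the identity $\frac{d}{ds}(e^{-\nu^+ s}a_k^+)=e^{-\nu^+ s}h_k$ can be integrated from $t$ to $\infty$, giving
\[
a_k^+(t) = -\int_t^\infty e^{-\nu^+(s-t)} h_k(s)\,ds,
\]
and Cauchy--Schwarz then yields
\[
|a^+(t)|^2 \leq \frac{C}{\nu^+}\int_t^\infty e^{-\nu^+(s-t)} f(s)^2\,ds.
\]
Setting $\epsilon(t):=\sup_{s\geq t}f(s)\to 0$, the crude bound $f(s)^2\leq \epsilon(t)f(s)$ gives $|a^+(t)|^2 \leq \frac{C\epsilon(t)}{\nu^+}\int_t^\infty e^{-\nu^+(s-t)}f(s)\,ds$.

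To close the argument I must prove the comparison $\int_t^\infty e^{-\nu^+(s-t)} f(s)\,ds \lesssim f(t)$ for $t$ large, from which $|a^+(t)|^2/f(t) \leq C\epsilon(t)\to 0$ would contradict the hypothesis. For the $\mathcal{F}(D)$ component, \eqref{zes} yields $|\dot D|\lesssim f$; combined with \eqref{Fesa} this gives $|\frac{d}{ds}\log \mathcal{F}(D(s))|\lesssim f(s)\leq \epsilon(t)$, so $\mathcal{F}(D(s))\leq 2\mathcal{F}(D(t))$ on any interval where $\epsilon(t)(s-t)$ is small, which suffices since the exponential weight $e^{-\nu^+(s-t)}$ truncates the integral. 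For the $\|\vec\varepsilon\|_{\mathcal{H}}^2$ component, Gr\"onwall applied to \eqref{Ges1} combined with \eqref{Gdouchi} gives
\[
\|\vec\varepsilon(s)\|_{\mathcal{H}}^2 \lesssim e^{-\tau(s-t)}\|\vec\varepsilon(t)\|_{\mathcal{H}}^2 + \int_t^s e^{-\tau(s-r)}\bigl(|a^+(r)|^2+\mathcal{F}(D(r))^2\bigr)dr + |a^+(s)|^2.
\]
Multiplying by $e^{-\nu^+(s-t)}$, integrating $s$ from $t$ to $\infty$, and exchanging orders of integration converts the double integral into $\int_t^\infty e^{-\nu^+(r-t)} (|a^+(r)|^2+\mathcal{F}(D(r))^2)\,dr$, producing a self-referential inequality on $\int_t^\infty e^{-\nu^+(s-t)}\|\vec\varepsilon(s)\|_{\mathcal{H}}^2\,ds$ whose $|a^+|^2$ contribution can be absorbed into the left-hand side using the Cauchy--Schwarz bound above, provided $\epsilon(t)$ (equivalently $\delta$) is small enough.

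The main obstacle is precisely this closing step: the comparison $\int e^{-\nu^+(s-t)}f(s)\,ds \lesssim f(t)$ is self-referential, because Lemma \ref{Glem} controls $\|\vec\varepsilon\|_{\mathcal{H}}^2$ itself only up to an integral of $|a^+|^2$. The balancing uses the smallness $\epsilon(t)\to 0$ (equivalently the tubular smallness $\delta$) to ensure the feedback constant is strictly less than one, and a careful comparison of the rates $\nu^+$ and $\tau$ to guarantee that the exchange-of-order integrals converge. Once the comparison estimate is established, the contradiction with the standing hypothesis $|a^+(t_n)|^2\geq\lambda_0 f(t_n)$ is immediate and completes the proof.
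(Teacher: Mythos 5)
Your strategy is genuinely different from the paper's. You represent the unstable mode by backward Duhamel integration, $a_k^+(t)=-\int_t^\infty e^{-\nu^+(s-t)}h_k(s)\,ds$ (legitimate, since $|a_k^+|\lesssim\|\vec{\varepsilon}\|_{\mathcal H}\to0$ and $\nu^+>0$), and then try to convert this into $|a^+(t)|^2\lesssim \epsilon(t)\,f(t)$, $f=\|\vec{\varepsilon}\|_{\mathcal H}^2+\mathcal{F}(D)$, using only Lemma \ref{Glem}, \eqref{Gdouchi} and the slow variation of $\mathcal{F}(D)$. The paper instead runs a forward bootstrap: if $|a^+|^2$ ever dominates $f$, then \eqref{apmes} forces exponential growth of $|a^+|^2$ while the stable modes and $\mathcal{F}(D)$ vary slowly, and a monotone functional built from the nonincreasing energy (via \eqref{energydecay} and the expansion \eqref{Hamilower}) keeps the bootstrap closed until $|a^+|$ violates the smallness \eqref{Massu}, a contradiction. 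Your route avoids the Hamiltonian expansion and the energy decay altogether, which is a genuine structural difference; it would, however, need the hypotheses of Lemma \ref{Glem} and \eqref{Gdouchi}, so you must first restrict to large times where $\delta<\delta_2$, exactly as the paper does through $T_M$.

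As written, though, there is a gap, and you flag it yourself: everything hinges on the comparison $\int_t^\infty e^{-\nu^+(s-t)}f(s)\,ds\lesssim f(t)$, which you describe but do not establish. The $\mathcal{F}(D)$ part is fine (slow variation from \eqref{zes} and \eqref{Fesa}). For the $\|\vec{\varepsilon}\|_{\mathcal H}^2$ part, the difficulty is not the one you name (convergence of the exchanged integrals, or the relative size of $\nu^+$ and $\tau$); those integrals always converge. The real danger is the secular factor produced when two exponentials with the same rate are convolved: if you weight with $e^{-\nu^+(s-t)}$ and then reinsert the Cauchy--Schwarz bound $|a^+(s)|^2\lesssim\epsilon(t)\int_s^\infty e^{-\nu^+(r-s)}f(r)\,dr$, the exchange of order produces $\int_t^\infty (r-t)e^{-\nu^+(r-t)}f(r)\,dr$, which is not controlled by the quantity you want to absorb into; iterating only degrades the exponential rate. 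The repair is to work with a fixed outer rate $\kappa<\min(\nu^+,\tau)$ and $\Phi(t)=\int_t^\infty e^{-\kappa(s-t)}f(s)\,ds$: since the backward-integration bound carries the strictly larger rate $\nu^+$, the convolution yields $e^{-\kappa(r-t)}/(\nu^+-\kappa)$ with no secular factor, and combining Gronwall for \eqref{Ges1}, \eqref{Gdouchi}, and $\mathcal{F}(D(r))^2\le\epsilon(t)\mathcal{F}(D(r))$ gives $\Phi(t)\le Cf(t)+C\epsilon(t)\Phi(t)$, hence $\Phi(t)\lesssim f(t)$ and $|a^+(t)|^2\lesssim\epsilon(t)f(t)$ for large $t$, which is the lemma (the contradiction framing is then unnecessary). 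So your approach can be completed, but the step you defer as ``once the comparison estimate is established'' is precisely where the work lies, and the technical obstacle there is different from the one you anticipate.
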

\begin{proof}
First, by Lemma \ref{enelem} and Lemma \ref{Hamilem}, there exists $C_1>0$ such that 
\begin{align}\label{a+espart0}
\begin{aligned}
\|\vec{\varepsilon}\|_{\mathcal{H}}^2+\mathcal{F}(D)\leq C_1\left(E(\vec{u})-KE(Q,0)+|a^+|^2+\sum_{k=1}^K(|a_k^-|^2+(\sum_{l=1}^d|b_{k,l}|^2))+\mathcal{F}(D)\right),
\\
E(\vec{u})-KE(Q,0)+|a^+|^2+\sum_{k=1}^K(|a_k^-|^2+(\sum_{l=1}^d|b_{k,l}|^2))+\mathcal{F}(D)<C_1\left(\|\vec{\varepsilon}\|_{\mathcal{H}}^2+\mathcal{F}(D)\right).
\end{aligned}
\end{align}
We fix $M>0$. Let $\delta_M>0$ be sufficiently small. By \eqref{sa}, there exists $T_M>0$ such that for $t\geq T_M$
\begin{align}\label{Massu}
\frac{1}{D(t)}+\mathcal{F}(D(t))+\|\vec{\varepsilon}(t)\|_{\mathcal{H}}+|a^+(t)|<\delta_M<\delta_2.
\end{align}
Here, we assume that $T_1>T_M$ satisfies 
\begin{align*}
\frac{1}{M}\left( \|\vec{\varepsilon}(T_1)\|_{\mathcal{H}}^2+\mathcal{F}(D(T_1))\right)\leq |a^+(T_1)|^2.
\end{align*}
Then, we define 
\begin{align}\label{M1def}
M_1=\max{\left(M, C_1^2M, \frac{C_1(2+\nu^+)}{\nu^+} \right)}+1,
\end{align}
and we introduce the following bootstrap estimate
\begin{align}\label{bses1}
\frac{1}{M_1}\left( \|\vec{\varepsilon}\|_{\mathcal{H}}^2+\mathcal{F}(D)\right)\leq |a^+|^2\leq \delta_M^2.
\end{align}
We define $T_2>T_1$ as 
\begin{align*}
T_2=\sup\{t\in[T_1,\infty)\ \mbox{such that \eqref{bses1} holds on}\ [T_1,t]\}.
\end{align*}
Since $\delta_M>0$ is sufficiently small, by Lemma \ref{expansionlem} we have for $T_1<t<T_2$
\begin{align}\label{a+espart1}
\begin{aligned}
\frac{d}{dt}|a^+|^2\geq \nu^+|a^+|^2,
\\
\frac{d}{dt}\left(\sum_{k=1}^K(|a_k^-|^2+(\sum_{l=1}^d|b_{k,l}|^2)\right)\leq |a^+|^2.
\end{aligned}
\end{align}
Furthermore, $D$ satisfies
\begin{align*}
\left| \frac{d}{dt}D\right|\lesssim \|\vec{\varepsilon}\|_{\mathcal{H}}^2+\mathcal{F}(D).
\end{align*}
Therefore, we have
\begin{align}\label{a+espart2}
\left| \frac{d}{dt}\mathcal{F}(D)\right|\leq |a^+|^2.
\end{align}
Here, we define
\begin{align*}
\mathcal{I}=M_1|a^+|^2- C_1\left(E(\vec{u})-KE(Q,0)+|a^+|^2+\sum_{k=1}^K(|a_k^-|^2+(\sum_{l=1}^d|b_{k,l}|^2))+\mathcal{F}(D)\right).
\end{align*}
Then we have $\mathcal{I}(T_1)>0$. Furthermore, by \eqref{energydecay}, \eqref{M1def}, \eqref{a+espart1}, and \eqref{a+espart2}, we have for $T_1<t<T_2$
\begin{align*}
\frac{d}{dt}\mathcal{I}\geq \left( (M_1-C_1)\nu^+-2C_1\right)|a^+|^2\geq 0.
\end{align*}
Therefore as long as \eqref{bses1} holds, we have
\begin{align*}
C_1\left(E(\vec{u})-KE(Q,0)+|a^+|^2+\sum_{k=1}^K(|a_k^-|^2+(\sum_{l=1}^d|b_{k,l}|^2))+\mathcal{F}(D)\right)<M_1|a^+|^2.
\end{align*} 
In addition, by \eqref{a+espart0} we have for $T_1<t<T_2$
\begin{align}\label{bses2}
\|\vec{\varepsilon}\|_{\mathcal{H}}^2+\mathcal{F}(D)<M_1|a^+|^2.
\end{align}
This implies that as long as \eqref{bses1} holds, we have \eqref{bses2}. Therefore for $T_1\leq t\leq T_2$, \eqref{bses2} holds, and $|a^+|^2$ increases exponentially. Thus $|a^+(T_3)|=\delta_M$ holds for some $T_3>T_1$, which contradicts \eqref{Massu}. Therefore for all $t\geq T_M$,
\begin{align*}
|a^+|^2\leq \frac{1}{M}\left( \|\vec{\varepsilon}\|_{\mathcal{H}}^2+\mathcal{F}(D)\right)
\end{align*}
holds, which implies \eqref{lima}.
\end{proof}

By Lemma \ref{a+lem1}, we estimate $\|\vec{\varepsilon}\|_{\mathcal{H}}$ and derive the conditions for nonlinear interactions. We define $V$ as 
\begin{align}\label{Vdef}
V=-\sum_{1\leq i<j\leq K}\sigma_i\sigma_j\mathcal{F}(|z_i-z_j|).
\end{align}
\begin{lemma}\label{limeplem}
Let $\vec{u}$ be a solution of \eqref{DNKG} satisfying \eqref{sa} for some $\sigma,\ z$, and $\delta>0$. Then we have
\begin{align}\label{limep}
\lim_{t\to\infty} \frac{\|\vec{\varepsilon}(t)\|_{\mathcal{H}}^2}{\mathcal{F}(D(t))}=0.
\end{align}
Furthermore, we have
\begin{align}\label{Ves1}
\liminf_{t\to\infty} \frac{V(t)}{\mathcal{F}(D(t))}\geq 0.
\end{align}
\end{lemma}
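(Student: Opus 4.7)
The plan for \eqref{limep} is to upgrade the differential inequality of Lemma \ref{Glem} by absorbing the $|a^+|^2$ term via Lemma \ref{a+lem1}, and then run a Gronwall-type argument on the weighted ratio $\mathcal{G}/\mathcal{F}(D)$. To begin, I would use \eqref{Gdouchi} to write $\|\vec{\varepsilon}\|_{\mathcal{H}}^2 \lesssim \mathcal{G}_+ + |a^+|^2$ with $\mathcal{G}_+ := \max(\mathcal{G},0)$, introduced to bypass the a priori unknown sign of $\mathcal{G}$. Combined with Lemma \ref{a+lem1} and a short absorption, this yields, for $t$ large,
\begin{align*}
|a^+(t)|^2 \leq \omega_1(t)\bigl(\mathcal{G}_+(t) + \mathcal{F}(D(t))\bigr),
\end{align*}
with $\omega_1(t) \to 0$ as $t\to\infty$. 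Since $\mathcal{F}(D)\to 0$ we also have $\mathcal{F}(D)^2 = o(\mathcal{F}(D))$. Plugging both bounds into Lemma \ref{Glem} and working with $\mathcal{G}_+$ gives the a.e.\ differential inequality
\begin{align*}
\tfrac{d}{dt}\mathcal{G}_+(t) \leq -\tfrac{\tau}{2}\mathcal{G}_+(t) + \omega_2(t)\mathcal{F}(D(t)),
\end{align*}
with $\omega_2(t)\to 0$; in regions where $\mathcal{G}\leq 0$ the inequality is trivial since $\mathcal{G}_+'=0$ and the right-hand side is nonnegative.

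Next I would exploit the slow variation of the weight $\mathcal{F}(D(t))$. By \eqref{zes}, $|\dot D|\lesssim \mathcal{F}(D)+\|\vec{\varepsilon}\|_{\mathcal{H}}^2 \to 0$, and \eqref{Fesa} gives $\mathcal{F}'(D)/\mathcal{F}(D) \to -1$; hence $[\log \mathcal{F}(D(t))]' \to 0$. Setting $F_+(t)=\mathcal{G}_+(t)/\mathcal{F}(D(t))\geq 0$, the chain rule combined with absorbing the $F_+\cdot[\log\mathcal{F}(D)]'$ term yields $F_+' \leq -(\tau/4)F_+ + \omega_2(t)$ for $t$ large, and standard integration against $e^{\tau t/4}$ gives $F_+(t)\to 0$. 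Hence $\mathcal{G}_+ = o(\mathcal{F}(D))$. Using the lower bound $\mathcal{G}\geq -L|a^+|^2 = -o(\mathcal{F}(D))$ (from the first step) together with $\|\vec{\varepsilon}\|_{\mathcal{H}}^2 \lesssim \mathcal{G}_+ + |a^+|^2$, we conclude $\|\vec{\varepsilon}\|_{\mathcal{H}}^2 = o(\mathcal{F}(D))$, which is \eqref{limep}.

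For \eqref{Ves1} I would rearrange \eqref{Hamieq}. Since $\vec{u}$ is a $K$-solitary wave, the Hamiltonian expansion together with $\vec{\varepsilon}\to 0$ and $\mathcal{F}(D)\to 0$ forces $E(\vec{u}(t))\to KE(Q,0)$, and then the monotonicity \eqref{energydecay} gives $E(\vec{u}(t))-KE(Q,0)\geq 0$ for all $t$. Rearranging \eqref{Hamieq} yields
\begin{align*}
c_*V \geq -\tfrac{1}{2}\bigl(\langle\mathcal{L}\varepsilon,\varepsilon\rangle + \|\eta\|_{L^2}^2\bigr) - O\bigl(D^{-1}\mathcal{F}(D) + \|\vec{\varepsilon}\|_{\mathcal{H}}^{\theta_2}\bigr).
\end{align*}
Each term on the right is $o(\mathcal{F}(D))$: the bilinear term satisfies $\langle\mathcal{L}\varepsilon,\varepsilon\rangle + \|\eta\|_{L^2}^2 \lesssim \|\vec{\varepsilon}\|_{\mathcal{H}}^2 = o(\mathcal{F}(D))$ by \eqref{limep}, $D^{-1}\mathcal{F}(D) = o(\mathcal{F}(D))$ is immediate, and $\|\vec{\varepsilon}\|_{\mathcal{H}}^{\theta_2} = o(\mathcal{F}(D)^{\theta_2/2}) = o(\mathcal{F}(D))$ because $\theta_2 > 2$ and $\mathcal{F}(D)\to 0$. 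Therefore $V/\mathcal{F}(D)\geq -o(1)$, giving \eqref{Ves1}.

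I expect the main technical obstacle to be the weighted Gronwall step: one must simultaneously justify the a.e.\ differential inequality for $\mathcal{G}_+ = \max(\mathcal{G},0)$, verify the slow variation $[\log \mathcal{F}(D)]'\to 0$ (using $\|\vec{\varepsilon}\|_{\mathcal{H}}\to 0$ from \eqref{sa} and $|\dot D|\lesssim \mathcal{F}(D)+\|\vec{\varepsilon}\|_{\mathcal{H}}^2$ from \eqref{zes}), and then extract the limit $F_+\to 0$ by a careful time integration. The remaining manipulations are routine applications of the lemmas already established in this section.
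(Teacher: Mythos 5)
Your proposal is correct and follows essentially the same route as the paper: both reduce \eqref{limep} to showing $\mathcal{G}/\mathcal{F}(D)\to 0$ by feeding the $o(1)$-bound on $|a^+|^2$ from Lemma \ref{a+lem1} into the differential inequality of Lemma \ref{Glem} and running a weighted Gronwall argument (the paper handles the sign of $\mathcal{G}$ via a two-sided $\limsup/\liminf$ estimate with a fixed $\epsilon$ rather than your positive part $\mathcal{G}_+$, a purely cosmetic difference), and both obtain \eqref{Ves1} by combining \eqref{Hamieq} with $E(\vec{u})-KE(Q,0)\geq 0$ and \eqref{limep}.
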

\begin{proof}
Since \eqref{Gdouchi} holds true, we only need to show
\begin{align}\label{Glim}
\lim_{t\to\infty} \frac{\mathcal{G}(t)}{\mathcal{F}(D(t))}=0.
\end{align}
We define 
\begin{align*}
S(t)=\frac{\mathcal{G}(t)}{\mathcal{F}(D(t))}.
\end{align*}
We fix $\epsilon>0$. We may assume $0<\delta<\delta_2$. By \eqref{Gdouchi} and \eqref{lima}, there exists $C_1>0$ such that 
\begin{align}\label{C1es}
\mathcal{G}+\epsilon\mathcal{F}(D)\geq C_1\|\vec{\varepsilon}\|_{\mathcal{H}}^2.
\end{align}
We note that $C_1$ does not depend on $\epsilon$. Then, by \eqref{Fesa},\eqref{Ges1}, and \eqref{C1es}, there exists $T_{\epsilon}>0$  such that for $t>T_{\epsilon}$, 
\begin{align*}
S^{\prime}&\leq \frac{ -\tau \mathcal{G}+\frac{1}{\tau}\left(|a^+|^2+\mathcal{F}(D)^2\right)}{\mathcal{F}(D)}-\frac{\mathcal{G}\mathcal{F}^{\prime}(D)\frac{d}{dt}D}{\mathcal{F}(D)^2}
\\
&\leq \frac{-\tau \mathcal{G}+\epsilon\|\vec{\varepsilon}\|_{\mathcal{H}}^2}{\mathcal{F}(D)}+\epsilon+2S\left| \frac{d}{dt}D\right|
\\
&\leq \left(-\tau+\frac{\epsilon}{C_1}+2\left|\frac{d}{dt}D\right|\right)S+\epsilon+\frac{\epsilon^2}{C_1}.
\end{align*}
Furthermore, since we have \eqref{sa}, by choosing $\epsilon$ sufficiently small, we may assume that we have for $t\geq T_{\epsilon}$
\begin{align*}
S^{\prime}\leq -\frac{\tau}{2}S+2\epsilon,
\end{align*}
which implies for $t>T_{\epsilon}$
\begin{align*}
S(t)\leq e^{-\frac{\tau(t-T_{\epsilon})}{2}}S(T_{\epsilon})+\frac{4\epsilon}{\tau}.
\end{align*}
Therefore we obtain
\begin{align}\label{Slimsup}
\limsup_{t\to\infty}\frac{ \mathcal{G}}{\mathcal{F}(D)}\leq 0.
\end{align}
By \eqref{C1es}, we also have for $t>T_{\epsilon}$
\begin{align*}
\mathcal{G}\geq -\epsilon\mathcal{F}(D),
\end{align*}
which implies
\begin{align}\label{Sliminf}
\liminf_{t\to\infty}\frac{ \mathcal{G}}{\mathcal{F}(D)}\geq 0.
\end{align}
By \eqref{Slimsup} and \eqref{Sliminf}, we obtain \eqref{Glim}. Thus we obtain \eqref{limep}. \eqref{Ves1} is easily derived from \eqref{Hamieq} and \eqref{limep}, and we complete the proof.

\end{proof}

\subsection{Estimates of the distance between each solitary waves}

In this subsection, we estimate $D$.

\begin{lemma}\label{Dupperbound}
Let $\vec{u}$ be a solution of \eqref{DNKG} satisfying \eqref{sa} for some $\sigma,\ z$, and $0<\delta<\delta_2$. Then there exists $C_{\star}>0$ and $C_{\ast}>0$ depending on $\vec{u}$ such that for all $t>e+1$, 
\begin{align}
\mathcal{F}(D(t))\geq \frac{C_{\star}}{t+1}, \label{mathcalFes}
\\
D(t)\leq \log{t}-\frac{d-1}{2}\log{(\log{t})}+C_{\ast}. \label{Des}
\end{align}
\end{lemma}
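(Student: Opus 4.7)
The key observation is that the center dynamics \eqref{zes}, together with the improved error bound from Lemma \ref{limeplem}, imply that $D(t)$ satisfies an essentially autonomous differential inequality $\dot D \lesssim \mathcal{F}(D)$. Integrating this inequality yields the claimed logarithmic upper bound on $D$, and \eqref{mathcalFes} then follows from the monotonicity asymptotics of $\mathcal{F}$ in \eqref{Fesa}.

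My first step is to show that $|\dot z_k(t)| \lesssim \mathcal{F}(D(t))$ for each $k$ and each sufficiently large $t$. From \eqref{zes},
\[
|\dot z_k(t)| \leq \sum_{i\neq k} \mathcal{F}(|z_k - z_i|) + C\bigl(e^{-\theta_1 D} + \|\vec{\varepsilon}\|_{\mathcal{H}}^2\bigr).
\]
Since $\mathcal{F}$ is eventually decreasing (by \eqref{Fesa}) and $|z_k - z_i| \geq D$, each term in the sum is bounded by $\mathcal{F}(D)$. The asymptotic $\mathcal{F}(r) \sim c_\star r^{-(d-1)/2} e^{-r}$ together with $\theta_1 > 1$ gives $e^{-\theta_1 D} = o(\mathcal{F}(D))$, and Lemma \ref{limeplem} provides $\|\vec{\varepsilon}\|_{\mathcal{H}}^2 = o(\mathcal{F}(D))$. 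Since $D = \min_{i\neq j}|z_i - z_j|$ is the pointwise minimum of finitely many $C^1$ functions, it is Lipschitz continuous, and at its differentiability points $|\dot D(t)| \leq 2\max_k |\dot z_k(t)| \lesssim \mathcal{F}(D(t))$.

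Now set $\Phi(r) := \int_{r_0}^r \mathcal{F}(s)^{-1}\,ds$ for a fixed large $r_0$, so that $\frac{d}{dt}\Phi(D(t)) = \Phi'(D)\,\dot D \leq C$ almost everywhere. Integration from some $T_0$ yields $\Phi(D(t)) \leq C_1(1 + t)$ for $t \geq T_0$. By \eqref{Fesa} and a single integration by parts, $\Phi(r) = c_\star^{-1} r^{(d-1)/2} e^{r}\bigl(1 + o(1)\bigr)$ as $r \to \infty$. Writing $D(t) = \log t - \tfrac{d-1}{2}\log\log t + \rho(t)$ and substituting into $\Phi(D(t)) \leq C_1(1+t)$ shows that $\rho(t)$ is bounded above by a constant, which establishes \eqref{Des}. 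Applying the decreasing function $\mathcal{F}$ together with \eqref{Fesa} to this upper bound then gives \eqref{mathcalFes} with the claimed rate $1/t$. The main technical subtlety is the careful asymptotic inversion of $\Phi$ needed to isolate the $\tfrac{d-1}{2}\log\log t$ correction; once the differential inequality $|\dot D| \lesssim \mathcal{F}(D)$ is in hand the rest is routine real-variable analysis.
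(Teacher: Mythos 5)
Your proposal is correct and follows essentially the same route as the paper: both derive the one-sided differential inequality $\dot D \lesssim \mathcal{F}(D)$ from \eqref{zes} together with Lemma \ref{limeplem}, integrate the antiderivative $G(r)=\int^r \mathcal{F}(s)^{-1}\,ds$ (your $\Phi$), use the asymptotics $G(r)\sim 1/\mathcal{F}(r)$ coming from \eqref{Fesa}, and invert to extract the $\log t - \tfrac{d-1}{2}\log\log t$ bound. The only cosmetic difference is ordering — the paper obtains \eqref{mathcalFes} directly from $G(D(t))\lesssim t$ and then inverts to get \eqref{Des}, whereas you obtain \eqref{Des} first and deduce \eqref{mathcalFes} from it — and that you spell out the almost-everywhere differentiability of the Lipschitz function $D$, which the paper leaves implicit.
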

\begin{proof}
By \eqref{zes} and Lemma \ref{limeplem}, there exists $C_1>0$ such that 
\begin{align}\label{odeuekara}
\frac{d}{dt}D\leq C_1\mathcal{F}(D).
\end{align}
We define 
\begin{align*}
G(r)=\int_1^r \frac{ds}{\mathcal{F}(s)}.
\end{align*}
Then, we have for $r>1$
\begin{align}\label{Ghyouka}
\left|G(r)-\frac{1}{\mathcal{F}(r)}\right|\lesssim \frac{1}{r\mathcal{F}(r)}.
\end{align}
Therefore by \eqref{odeuekara} and \eqref{Ghyouka}, there exists $C_2>0$ such that
\begin{align*}
G(D(t))-G(D(0))\leq C_2t,
\end{align*}
which implies \eqref{mathcalFes}. Notably, if $R\gg1$ and $G(D)=R$, we have
\begin{align*}
D=\log{R}-\frac{d-1}{2}\log{(\log{R})}+C+O\left(\frac{\log{(\log{t})}}{\log{t}}\right).
\end{align*}
for some $C>0$. Therefore \eqref{Des} follows from \eqref{mathcalFes}, and we complete the proof.
\end{proof}

\subsection{Soliton repulsivity condition}

In this subsection, we assume the condition
\begin{align}\label{Vrep}
\liminf_{t\to \infty}\frac{V(t)}{\mathcal{F}(D(t))}> 0.
\end{align}
When $\vec{u}$ satisfies this condition, the estimate of $|a^+|$ is improved compared with Lemma \ref{a+lem1}.
\begin{lemma}\label{a+lem2}
Let $\vec{u}$ be a solution of \eqref{DNKG} satisfying \eqref{sa} for some $\sigma,\ z$, and $0<\delta<\delta_2$. Furthermore, we assume \eqref{Vrep}. Then there exists $C>0$ depending on $\vec{u}$ such that for $t\geq 0$,
\begin{align}\label{newa+es}
|a^+(t)|\leq C\left(\|\vec{\varepsilon}(t)\|_{\mathcal{H}}^2+\mathcal{F}(D(t))\right).
\end{align}
\end{lemma}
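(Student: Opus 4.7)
The plan is to invert the unstable direction by integrating the ODE for $a_k^+$ backwards from $+\infty$, then to exploit the damped energy monotonicity sharpened by the repulsivity condition to close the resulting integral against $\mathcal{F}(D(t))$.

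First I would set up the backward integration. Estimate \eqref{apmes} rewrites as $|\tfrac{d}{dt}(e^{-\nu^+ t} a_k^+)| \lesssim e^{-\nu^+ t}(\mathcal{F}(D) + \|\vec{\varepsilon}\|_{\mathcal{H}}^2)$. Since $|a_k^+(t)| \lesssim \|\vec{\varepsilon}(t)\|_{\mathcal{H}} \to 0$ and $\nu^+ > 0$, we have $e^{-\nu^+ t} a_k^+(t) \to 0$ as $t \to \infty$, and integrating from $t$ to $\infty$ gives
\begin{equation*}
|a_k^+(t)| \lesssim \int_t^\infty e^{-\nu^+(s-t)}\bigl(\mathcal{F}(D(s)) + \|\vec{\varepsilon}(s)\|_{\mathcal{H}}^2\bigr)\,ds.
\end{equation*}
It thus suffices to bound this integral by $C\mathcal{F}(D(t))$ for all large $t$; the compact initial interval is handled by enlarging $C$ in terms of $\vec{u}$, as the statement allows.

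The central step, and the one where \eqref{Vrep} is essential, is to show that $\mathcal{F}(D(s)) \lesssim \mathcal{F}(D(t))$ for $s \geq t \geq T_0$. The trivial upper bound $|V(t)| \leq K^2 \mathcal{F}(D(t))$ (since $|z_i - z_j| \geq D$ and $\mathcal{F}$ is decreasing) together with the lower bound in \eqref{Vrep} gives $V(t) \sim \mathcal{F}(D(t))$. Substituting into the Hamiltonian expansion \eqref{Hamieq} of Lemma \ref{Hamilem}, and using Lemma \ref{limeplem} to absorb $\|\vec{\varepsilon}\|_{\mathcal{H}}^2 = o(\mathcal{F}(D))$ as well as the error $O(D^{-1}\mathcal{F}(D) + \|\vec{\varepsilon}\|_{\mathcal{H}}^{\theta_2}) = o(\mathcal{F}(D))$, I obtain $E(\vec{u}(t)) - KE(Q,0) \sim \mathcal{F}(D(t))$ for $t \geq T_0$. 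Since $E(\vec{u}(\cdot))$ is nonincreasing by \eqref{energydecay}, chaining these two-sided equivalences yields
\begin{equation*}
\mathcal{F}(D(s)) \lesssim E(\vec{u}(s)) - KE(Q,0) \leq E(\vec{u}(t)) - KE(Q,0) \lesssim \mathcal{F}(D(t))
\end{equation*}
for $s \geq t \geq T_0$.

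Plugging this back, and using $\|\vec{\varepsilon}(s)\|_{\mathcal{H}}^2 \leq \mathcal{F}(D(s))$ from Lemma \ref{limeplem}, the backward integral is bounded by $C\mathcal{F}(D(t)) \int_t^\infty e^{-\nu^+(s-t)}\,ds \lesssim \mathcal{F}(D(t))$, which gives \eqref{newa+es}. The main obstacle is precisely the middle step: the repulsivity hypothesis \eqref{Vrep} is exactly what is needed to equate $E(\vec{u}) - KE(Q,0)$ with $\mathcal{F}(D)$ up to multiplicative constants, thereby transferring the monotonicity of the damped energy into the comparison $\mathcal{F}(D(s)) \lesssim \mathcal{F}(D(t))$. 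Without \eqref{Vrep}, $V$ could be $o(\mathcal{F}(D))$, the equivalence would collapse, and one would have to substitute a much more delicate analysis (for instance pointwise Bernoulli-type estimates based on $|\tfrac{d}{ds}\mathcal{F}(D(s))| \lesssim \mathcal{F}(D(s))^2$) to close the backward integral.
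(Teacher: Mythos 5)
Your proof is correct, and it takes a genuinely different route from the paper. The paper argues by contradiction with a bootstrap: assuming $|a^+|>M_2(\|\vec{\varepsilon}\|_{\mathcal{H}}^2+\mathcal{F}(D))$ at some time, it shows $|a^+|$ grows exponentially while the auxiliary functional $\mathcal{J}=\bigl(E(\vec{u})-KE(Q,0)+M_1|a^+|^2\bigr)/|a^+|$ is nonincreasing (this is where \eqref{energydecay} enters), so that $|a^+|$ eventually exceeds the smallness threshold, a contradiction. You instead invert the unstable mode directly by integrating $\frac{d}{dt}(e^{-\nu^+t}a_k^+)$ backward from $+\infty$, and then close the tail integral via the chain $\mathcal{F}(D(s))\lesssim E(\vec{u}(s))-KE(Q,0)\leq E(\vec{u}(t))-KE(Q,0)\lesssim\mathcal{F}(D(t))$ for $s\geq t\geq T_0$. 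The middle inequality is again \eqref{energydecay}, and the two outer comparisons are exactly where \eqref{Vrep} is needed, via \eqref{Hamieq}, the trivial bound $|V|\lesssim\mathcal{F}(D)$, and Lemma \ref{limeplem} to absorb $\|\vec{\varepsilon}\|_{\mathcal{H}}^2$ and the error terms; note that for the lower bound you only need the crude estimate $\langle\mathcal{L}\varepsilon,\varepsilon\rangle\geq-C\|\varepsilon\|_{H^1}^2=-o(\mathcal{F}(D))$, so you bypass the coercivity bookkeeping \eqref{a+nozokisim} for $a_k^-$ and $b_{k,l}$ that the paper carries out. Both proofs ultimately rest on the same two ingredients (the unstable ODE \eqref{apmes} and the monotone energy made comparable to $\mathcal{F}(D)$ by the repulsivity condition), but your backward-integration argument is shorter and more transparent, at the mild cost of being less quantitative about the constant (which is anyway allowed to depend on $\vec{u}$, and the compact interval $[0,T_0]$ is absorbed into $C$ as you say, using that $\mathcal{F}(D)$ is continuous and positive there).
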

\begin{proof}
This proof is based on \cite[subsection 6.2]{IN}.

First, we estimate $a_k^-$ and $b_{k,l}$. By \eqref{apmes}, \eqref{bes}, and \eqref{limep}, we have
\begin{align}
|a_k^-(t)|&\lesssim e^{\nu^-t}|a_k^-(0)|+\int_0^te^{\nu^-(t-s)}\mathcal{F}(D(s))ds,\label{ak-es!}
\\
|b_{k,l}(t)|&\lesssim e^{-2\alpha t}|b_{k,l}(0)|+\int_0^t e^{-2\alpha(t-s)}\mathcal{F}(D(s))ds.\label{bkles!}
\end{align}
By \eqref{zes} and Lemma \ref{limeplem}, we have
\begin{align*}
\left|\frac{d}{dt}D\right|\lesssim \|\vec{\varepsilon}\|_{\mathcal{H}}^2+\mathcal{F}(D)\lesssim \mathcal{F}(D),
\end{align*}
and therefore we have
\begin{align}
e^{\nu^-t}+e^{-2\alpha t}\lesssim \mathcal{F}(D(t)).\label{Des!}
\end{align}
Furthermore, there exists $T>0$ such that 
\begin{align*}
\left| \frac{d}{dt}\mathcal{F}(D)\right|\leq \frac{\min{(-\nu^-,2\alpha)}}{2} \mathcal{F}(D).
\end{align*}
Then, we have for $t\geq s\geq T$,
\begin{align}
\mathcal{F}(D(s))\leq e^{\epsilon(t-s)}\mathcal{F}(D(t)),\label{Fes!}
\end{align}
where $\epsilon=\frac{\min{(-\nu^-,2\alpha)}}{2}$. Thus, by \eqref{ak-es!}, \eqref{bkles!}, \eqref{Des!}, and \eqref{Fes!}, we obtain
\begin{align}\label{a+nozokisim}
\sum_{k=1}^K \left(|a_k^-|^2+\left(\sum_{l=1}^d |b_{k,l}|^2\right)\right)\lesssim \mathcal{F}(D)^2.
\end{align}
Here, by \eqref{Vrep}, there exist $C_1>0$ and $T_1>0$ such that for $t>T_1$
\begin{align}\label{Vrepes1}
V(t)>C_1\mathcal{F}(D(t)).
\end{align}
Next, we collect some basic properties. By \eqref{apmes}, there exists $C_2>0$ such that for $1\leq k\leq K$ 
\begin{align*}
\left|\frac{d}{dt}a_k^+-\nu^+a_k^+\right|\leq C_2(\|\vec{\varepsilon}\|_{\mathcal{H}}^2+\mathcal{F}(D)).
\end{align*}
Therefore, there exists $C_3>0$ such that
\begin{align}\label{a+lemass}
\left| \frac{d}{dt}|a^+|^2-2\nu^+|a^+|^2 \right|\leq C_3|a^+|(\|\vec{\varepsilon}\|_{\mathcal{H}}^2+\mathcal{F}(D)).  
\end{align}
By \eqref{coersim}, \eqref{Lsumsim}, \eqref{Hamieq} \eqref{a+nozokisim}, and \eqref{Vrepes1}, there exist $0<C_4<1,\ C_5>0$ such that
\begin{align}\label{a+lemdouchi}
C_4(\|\vec{\varepsilon}\|_{\mathcal{H}}^2+\mathcal{F}(D))\leq E(\vec{u})-KE(Q,0)+C_5|a^+|^2\leq C_4^{-1}(\|\vec{\varepsilon}\|_{\mathcal{H}}^2+\mathcal{F}(D)).
\end{align}
Here, we define $M_1,\ M_2$ as
\begin{align}\label{Mdef}
M_1=\max{\left(C_5,\frac{C_3}{\nu^+} \right)}+1,\ M_2=\frac{2M_1}{C_4^2}.
\end{align}
Then, by \eqref{sa} and \eqref{lima}, there exist $\tilde{\delta}>0$ and  $T_2>T_1$ such that for $t>T_2$
\begin{align}
\|\vec{\varepsilon}\|_{\mathcal{H}}+\mathcal{F}(D)+|a^+|<\tilde{\delta}<\delta_2 \label{repasses1}
\\
|a^+|^2<\frac{C_4}{M_1+C_5}(\|\vec{\varepsilon}\|_{\mathcal{H}}^2+\mathcal{F}(D)). \label{repasses2}
\end{align}
Note that we may assume 
\begin{align}\label{deltabses}
\tilde{\delta}<\frac{1}{C_4M_1M_2}.
\end{align}
We assume that there exists $T_3>T_2$ such that 
\begin{align}\label{t3ass}
M_2(\|\vec{\varepsilon}(T_3)\|_{\mathcal{H}}^2+\mathcal{F}(D(T_3))<|a^+(T_3)|.
\end{align}
We then introduce the following bootstrap estimate
\begin{align}\label{Vrepbs}
M_1(\|\vec{\varepsilon}(t)\|_{\mathcal{H}}^2+\mathcal{F}(D(t))\leq |a^+(t)|<\tilde{\delta},
\end{align}
and we define $T_4>0$ as
\begin{align*}
T_4=\sup\{ t\in[T_3,\infty)\ \mbox{such that \eqref{Vrepbs} holds on}\ [T_3,t]\}.
\end{align*}
By \eqref{a+lemass}, \eqref{Mdef}, and  \eqref{Vrepbs}, we have for $T_3\leq t<T_4$
\begin{align*}
\frac{d}{dt}|a^+|^2\geq \nu^+|a^+|^2.
\end{align*}
In particular, $|a^+|$ is monotonically and exponentially growing. Here, we define 
\begin{align*}
\mathcal{J}=\frac{E(\vec{u})-KE(Q,0)+M_1|a^+|^2}{|a^+|}.
\end{align*}
Then by \eqref{a+lemdouchi}, \eqref{deltabses} and \eqref{t3ass}, we have
\begin{align}
\mathcal{J}(T_3)\leq \frac{C_4^{-1}(\|\vec{\varepsilon}(T_3)\|_{\mathcal{H}}^2+\mathcal{F}(D(T_3)))+M_1|a^+(T_3)|^2}{|a^+(T_3)|}<\frac{2}{M_2C_4}.
\end{align}
In addition, by \eqref{a+lemdouchi} and \eqref{repasses2}, we have
\begin{align*}
\frac{d}{dt}\mathcal{J}=\frac{-2\alpha\|{\partial}_tu\|_{L^2}^2}{|a^+|}+\frac{ M_1|a^+|^2-(E(\vec{u})-KE(Q,0))}{|a^+|^2}\frac{d}{dt}|a^+|\leq 0.
\end{align*}
Thus, we have
\begin{align*}
C_4(\|\vec{\varepsilon}\|_{\mathcal{H}}^2+\mathcal{F}(D))\leq \frac{2}{M_2C_4} |a^+|,
\end{align*}
and by \eqref{Mdef}, for $T_3\leq t<T_4$ we have
\begin{align}\label{bsleft}
M_1(\|\vec{\varepsilon}(t)\|_{\mathcal{H}}^2+\mathcal{F}(D(t))<|a^+(t)|.
\end{align}
Therefore as long as \eqref{Vrepbs} holds, $|a^+|$ increases exponentially, which implies that $T_4<\infty$ and $|a^+(T_4)|=\tilde{\delta}$. This contradicts \eqref{repasses1}. Thus we have for $t\geq T_1$
\begin{align*}
|a^+(t)|\leq M_2(\|\vec{\varepsilon}(t)\|_{\mathcal{H}}^2+\mathcal{F}(D(t)),
\end{align*}
and we complete the proof.
\end{proof}
From Lemma \ref{a+lem2}, we improve the estimate of $\|\vec{\varepsilon}\|_{\mathcal{H}}$. Furthermore,  we improve the estimate of the centers of the solitary wave.

\begin{lemma}\label{epzes}
Let $\vec{u}$ be a solution of \eqref{DNKG} satisfying \eqref{sa} for some $\sigma, z$, and $0<\delta<\delta_2$. Furthermore, we assume \eqref{Vrep}. Then, there exists $C>0$ depending on $\vec{u}$ such that 
\begin{align}\label{newepes}
\|\vec{\varepsilon}(t)\|_{\mathcal{H}}\leq C \mathcal{F}(D(t)).
\end{align}
Furthermore, for any $1<\theta<\theta_{\star}$, there exists $\tilde{C}>0$ such that for $1\leq k\leq K$, we have
\begin{align}\label{newzes}
\left|\dot{z}_k+\sum_{1\leq i\leq K,\ i\neq k}\sigma_i\sigma_k\mathcal{F}(|z_k-z_i|)\frac{z_k-z_i}{|z_k-z_i|}\right|&\leq \tilde{C} e^{-\theta D}.
\end{align}
\end{lemma}
\begin{proof}
By \eqref{Gdouchi} and \eqref{newa+es} there exists $L_1>0$ such that 
\begin{align*}
\mathcal{G}+L_1\mathcal{F}(D)^2\sim \|\vec{\varepsilon}\|_{\mathcal{H}}^2+\mathcal{F}(D)^2.
\end{align*}
Furthermore, by \eqref{newa+es} and Lemma \ref{Glem}, there exists $\tau_1>0$ such that 
\begin{align}\label{newGes1}
\frac{d}{dt}\mathcal{G}\leq -\tau_1 \mathcal{G}+\frac{1}{\tau_1} \mathcal{F}(D)^2.
\end{align}
By \eqref{zes}, we have
\begin{align}\label{newzesass1}
\frac{d}{dt}\mathcal{F}(D)^2\leq \frac{\tau_1}{2}\mathcal{F}(D)^2.
\end{align}
By \eqref{newGes1} and \eqref{newzesass1}, we obtain
\begin{align*}
\mathcal{G}\lesssim e^{-\tau_1t}\mathcal{G}(0)+\mathcal{F}(D)^2,
\end{align*}
which implies \eqref{newepes}. \eqref{newzes} follows from \eqref{zes} and \eqref{newepes}. Therefore we complete the proof.
\end{proof}

\section{Global dynamics of the center of each solitary wave}

Here, we consider the $K=3$ case. In \cite{CD}, it has been proven that a 3-solitary wave that has the same sign does not exist. Therefore, when we consider a 3-solitary wave, we may assume
\begin{align}\label{3soliass}
K=3, \sigma_1=\sigma_2=-\sigma_3.
\end{align}
Furthermore, we define
\begin{align*}
Z_1=z_1-z_3,\ Z_2=z_2-z_3,\ Z_0=z_2-z_1,
\\
D_1=|Z_1|,\ D_2=|Z_2|, D_0=|Z_0|.
\end{align*}
By the definition, we have
\begin{align}\label{Ddef2}
D=\min{(D_0,D_1,D_2)}.
\end{align}
In this section, we investigate the long-time dynamics of a 3-solitary wave. Although there are several possible ways to define the centers of the solitons, the long-time behavior is determined once the leading interaction term has been identified. Thus, we define $\theta_1$ to be a constant greater than $1$ and sufficiently close to it. In particular, $\theta_1$ satisfies 
\begin{align}\label{theta1}
1<\theta_1<\theta_{\star}.
\end{align}
\begin{remark}
We note that the closeness of $\theta_1$ to $1$ depends on the solution $\vec{u}$. This is because, in a later subsection, the smallness of $\theta_1-1$ will be required when applying Lemma \ref{Dupperbound}. However, since we fix $\vec{u}$ and consider its behavior as $t\to\infty$, this dependence on $\vec{u}$ is not an issue.
\end{remark}

Then, Lemma \ref{expansionlem} can be rewritten as follows.

\begin{lemma}\label{3solicenter1}
Let $\vec{u}$ be a solution of \eqref{DNKG} satisfying \eqref{sa} and \eqref{3soliass} for some $z,\ \sigma$, and $0<\delta\ll 1$.\ Then we have 
\begin{align}
\left|\dot{Z}_1-2\mathcal{F}(D_1)\frac{Z_1}{D_1}-\mathcal{F}(D_2)\frac{Z_2}{D_2}-\mathcal{F}(D_0)\frac{Z_0}{D_0}\right|\lesssim e^{-\theta_1 D}+\|\vec{\varepsilon}\|_{\mathcal{H}}^2,\label{Z1es}
\\
\left|\dot{Z}_2-2\mathcal{F}(D_2)\frac{Z_2}{D_2}-\mathcal{F}(D_1)\frac{Z_1}{D_1}+\mathcal{F}(D_0)\frac{Z_0}{D_0}\right|\lesssim e^{-\theta_1 D}+\|\vec{\varepsilon}\|_{\mathcal{H}}^2,\label{Z2es}
\\
\left|\dot{Z}_0+2\mathcal{F}(D_0)\frac{Z_0}{D_0}+\mathcal{F}(D_1)\frac{Z_1}{D_1}-\mathcal{F}(D_2)\frac{Z_2}{D_2}\right|\lesssim e^{-\theta_1 D}+\|\vec{\varepsilon}\|_{\mathcal{H}}^2.\label{Z0es}
\end{align}
Furthermore, we have
\begin{align}
\left|\frac{d}{dt}D_1-2\mathcal{F}(D_1)-\mathcal{F}(D_2)\frac{D_1}{D_2}-\left(\frac{\mathcal{F}(D_2)}{D_1D_2}+\frac{\mathcal{F}(D_0)}{D_0D_1}\right)(Z_0\cdot Z_1)\right|\lesssim e^{-\theta_1 D}+\|\vec{\varepsilon}\|_{\mathcal{H}}^2,\label{D1es}
\\
\left|\frac{d}{dt}D_2-2\mathcal{F}(D_2)-\mathcal{F}(D_1)\frac{D_2}{D_1}+\left(\frac{\mathcal{F}(D_1)}{D_1D_2}+\frac{\mathcal{F}(D_0)}{D_0D_2}\right)(Z_0\cdot Z_2)\right|\lesssim e^{-\theta_1 D}+\|\vec{\varepsilon}\|_{\mathcal{H}}^2, \label{D2es}
\\
\left|\frac{d}{dt}D_0+2\mathcal{F}(D_0)-\mathcal{F}(D_2)\frac{D_0}{D_2}-\left(\frac{\mathcal{F}(D_2)}{D_0D_2}-\frac{\mathcal{F}(D_1)}{D_0D_1}\right)(Z_0\cdot Z_1)\right|\lesssim e^{-\theta_1 D}+\|\vec{\varepsilon}\|_{\mathcal{H}}^2, \label{D0es1}
\\
\left|\frac{d}{dt}D_0+2\mathcal{F}(D_0)-\mathcal{F}(D_1)\frac{D_0}{D_1}-\left(\frac{\mathcal{F}(D_2)}{D_0D_2}-\frac{\mathcal{F}(D_1)}{D_0D_1}\right)(Z_0\cdot Z_2)\right|\lesssim e^{-\theta_1 D}+\|\vec{\varepsilon}\|_{\mathcal{H}}^2. \label{D0es2}
\end{align}
\end{lemma}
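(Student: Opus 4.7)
The plan is to obtain this lemma by straightforward specialization and algebraic rearrangement of Lemma~\ref{expansionlem}, together with the triangular identity $Z_2=Z_1+Z_0$.

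First I would apply Lemma~\ref{expansionlem} to $K=3$ with $\sigma_1=\sigma_2=-\sigma_3$. Writing out the sum in the velocity estimate for each $k$ and using $z_1-z_2=-Z_0$, $z_1-z_3=Z_1$, $z_2-z_3=Z_2$, I obtain
\begin{align*}
\dot z_1 &= \mathcal F(D_0)\tfrac{Z_0}{D_0}+\mathcal F(D_1)\tfrac{Z_1}{D_1}+O(e^{-\theta_1 D}+\|\vec\varepsilon\|_{\mathcal H}^2),\\
\dot z_2 &= -\mathcal F(D_0)\tfrac{Z_0}{D_0}+\mathcal F(D_2)\tfrac{Z_2}{D_2}+O(e^{-\theta_1 D}+\|\vec\varepsilon\|_{\mathcal H}^2),\\
\dot z_3 &= -\mathcal F(D_1)\tfrac{Z_1}{D_1}-\mathcal F(D_2)\tfrac{Z_2}{D_2}+O(e^{-\theta_1 D}+\|\vec\varepsilon\|_{\mathcal H}^2).
\end{align*}
Since the error bound is an $\mathbb R^d$-vector norm estimate, it is stable under taking linear combinations with coefficients of size $O(1)$. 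The identities $\dot Z_1=\dot z_1-\dot z_3$, $\dot Z_2=\dot z_2-\dot z_3$, $\dot Z_0=\dot z_2-\dot z_1$ then give \eqref{Z1es}, \eqref{Z2es}, and \eqref{Z0es} immediately.

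Next, I would derive each distance equation from $\frac{d}{dt}D_i=\frac{Z_i\cdot\dot Z_i}{D_i}$. Dotting \eqref{Z1es} against $Z_1/D_1$ produces the terms $2\mathcal F(D_1)$, $\mathcal F(D_2)\frac{Z_1\cdot Z_2}{D_1D_2}$, and $\mathcal F(D_0)\frac{Z_0\cdot Z_1}{D_0 D_1}$. To bring the cross term into the stated form I use $Z_2=Z_1+Z_0$, which gives
\[
\mathcal F(D_2)\frac{Z_1\cdot Z_2}{D_1 D_2}=\mathcal F(D_2)\frac{D_1}{D_2}+\mathcal F(D_2)\frac{Z_0\cdot Z_1}{D_1 D_2},
\]
producing exactly \eqref{D1es}. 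The estimate \eqref{D2es} follows by the symmetric computation with $Z_1=Z_2-Z_0$ applied to $Z_1\cdot Z_2$. For \eqref{D0es1} I dot \eqref{Z0es} with $Z_0/D_0$, obtaining $-2\mathcal F(D_0)$ plus cross terms $\mathcal F(D_2)\frac{Z_0\cdot Z_2}{D_0 D_2}$ and $-\mathcal F(D_1)\frac{Z_0\cdot Z_1}{D_0 D_1}$; substituting $Z_0\cdot Z_2=D_0^2+Z_0\cdot Z_1$ converts this into the stated form. For \eqref{D0es2} I instead substitute $Z_0\cdot Z_1=Z_0\cdot Z_2-D_0^2$ into the same identity, which regroups the leading $-2\mathcal F(D_0)$ with a $\mathcal F(D_1)\frac{D_0}{D_1}$ term and isolates $Z_0\cdot Z_2$ in the remainder.

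There is no genuine obstacle here: the whole lemma is an algebraic rewriting of Lemma~\ref{expansionlem} in the new variables $(Z_0,Z_1,Z_2)$, and every error term that appears is either the original $e^{-\theta_1 D}+\|\vec\varepsilon\|_{\mathcal H}^2$ from \eqref{zes} or is controlled by it because the factors $Z_i/D_i$ and $Z_i\cdot Z_j/(D_iD_j)$ are uniformly bounded by $1$. The only point requiring a little care is keeping track of signs when specializing the $\sigma_i\sigma_k$ products: here $\sigma_1\sigma_2=+1$ while $\sigma_1\sigma_3=\sigma_2\sigma_3=-1$, which is what produces the $\pm\mathcal F(D_0)Z_0/D_0$ patterns in \eqref{Z1es}--\eqref{Z0es}.
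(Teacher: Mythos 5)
Your proposal is correct and follows essentially the same route as the paper: specialize \eqref{zes} to $K=3$ with $\sigma_1=\sigma_2=-\sigma_3$, take differences to get $\dot Z_0,\dot Z_1,\dot Z_2$, then dot with $Z_i/D_i$ and use $Z_2=Z_0+Z_1$ to regroup the cross terms. All signs and substitutions check out against the paper's computation.
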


\begin{proof}
First, by Lemma \ref{expansionlem} and \eqref{3soliass}, we have
\begin{align*}
\dot{z}_1&=\mathcal{F}(D_1)\frac{Z_1}{D_1}+\mathcal{F}(D_0)\frac{Z_0}{D_0}+O(e^{-\theta_1 D}+\|\vec{\varepsilon}\|_{\mathcal{H}}^2),
\\
\dot{z}_2&=\mathcal{F}(D_2)\frac{Z_2}{D_2}-\mathcal{F}(D_0)\frac{Z_0}{D_0}+O(e^{-\theta_1 D}+\|\vec{\varepsilon}\|_{\mathcal{H}}^2),
\\
\dot{z}_3&=-\mathcal{F}(D_1)\frac{Z_1}{D_1}-\mathcal{F}(D_2)\frac{Z_2}{D_2}+O(e^{-\theta_1 D}+\|\vec{\varepsilon}\|_{\mathcal{H}}^2).
\end{align*}
Thus, by the above estimates, we obtain \eqref{Z1es}, \eqref{Z2es}, \eqref{Z0es}. Furthermore, by the direct computation, we have
\begin{align*}
\frac{d}{dt}D_1&=\frac{\dot{Z}_1\cdot Z_1}{D_1}
\\
&=2\mathcal{F}(D_1)+\mathcal{F}(D_2)\frac{Z_1\cdot Z_2}{D_1D_2}+\mathcal{F}(D_0)\frac{Z_0\cdot Z_1}{D_0D_1}+O(e^{-\theta_1 D}+\|\vec{\varepsilon}\|_{\mathcal{H}}^2)
\\
&=2\mathcal{F}(D_1)+\frac{\mathcal{F}(D_2)}{D_1D_2}(Z_0+Z_1)\cdot Z_1+\frac{\mathcal{F}(D_0)}{D_0D_1}(Z_0\cdot Z_1)+O(e^{-\theta_1 D}+\|\vec{\varepsilon}\|_{\mathcal{H}}^2)
\\
&=2\mathcal{F}(D_1)+\frac{\mathcal{F}(D_2)D_1}{D_2}+\left(\frac{\mathcal{F}(D_2)}{D_1D_2}+\frac{\mathcal{F}(D_0)}{D_0D_1}\right)(Z_0\cdot Z_1)+O(e^{-\theta_1 D}+\|\vec{\varepsilon}\|_{\mathcal{H}}^2),
\\
\frac{d}{dt}D_2&=\frac{\dot{Z}_2\cdot Z_2}{D_2}
\\
&=2\mathcal{F}(D_2)+\mathcal{F}(D_1)\frac{Z_1\cdot Z_2}{D_1D_2}-\mathcal{F}(D_0)\frac{Z_0\cdot Z_2}{D_0D_2}+O(e^{-\theta_1 D}+\|\vec{\varepsilon}\|_{\mathcal{H}}^2)
\\
&=2\mathcal{F}(D_2)+\frac{\mathcal{F}(D_1)}{D_1D_2}(Z_2-Z_0)\cdot Z_2-\frac{\mathcal{F}(D_0)}{D_0D_2}(Z_0\cdot Z_2)+O(e^{-\theta_1 D}+\|\vec{\varepsilon}\|_{\mathcal{H}}^2)
\\
&=2\mathcal{F}(D_2)+\frac{\mathcal{F}(D_1)D_2}{D_1}-\left(\frac{\mathcal{F}(D_1)}{D_1D_2}+\frac{\mathcal{F}(D_0)}{D_0D_2}\right)(Z_0\cdot Z_2)+O(e^{-\theta_1 D}+\|\vec{\varepsilon}\|_{\mathcal{H}}^2).
\end{align*}
Furthermore, $D_0$ satisfies
\begin{align*}
\frac{d}{dt}D_0&=\frac{\dot{Z}_0\cdot Z_0}{D_0}
\\
&=-2\mathcal{F}(D_0)-\mathcal{F}(D_1)\frac{Z_0\cdot Z_1}{D_0D_1}+\mathcal{F}(D_2)\frac{Z_0\cdot Z_2}{D_0D_2}+O(e^{-\theta_1 D}+\|\vec{\varepsilon}\|_{\mathcal{H}}^2).
\end{align*}
Thus, we obtain
\begin{align*}
\frac{d}{dt}D_0&=-2\mathcal{F}(D_0)-\mathcal{F}(D_1)\frac{Z_0\cdot Z_1}{D_0D_1}+\mathcal{F}(D_2)\frac{Z_0\cdot Z_2}{D_0D_2}+O(e^{-\theta_1 D}+\|\vec{\varepsilon}\|_{\mathcal{H}}^2)
\\
&=-2\mathcal{F}(D_0)-\frac{\mathcal{F}(D_1)}{D_0D_1}(Z_0\cdot Z_1)+\frac{\mathcal{F}(D_2)}{D_0D_2}(Z_0+Z_1)\cdot Z_0+O(e^{-\theta_1 D}+\|\vec{\varepsilon}\|_{\mathcal{H}}^2)
\\
&=-2\mathcal{F}(D_0)+\frac{\mathcal{F}(D_2)D_0}{D_2}+\left(\frac{\mathcal{F}(D_2)}{D_0D_2}-\frac{\mathcal{F}(D_1)}{D_0D_1}\right)(Z_0\cdot Z_1)+O(e^{-\theta_1 D}+\|\vec{\varepsilon}\|_{\mathcal{H}}^2),
\end{align*}
and
\begin{align*}
\frac{d}{dt}D_0&=-2\mathcal{F}(D_0)-\mathcal{F}(D_1)\frac{Z_0\cdot Z_1}{D_0D_1}+\mathcal{F}(D_2)\frac{Z_0\cdot Z_2}{D_0D_2}+O(e^{-\theta_1 D}+\|\vec{\varepsilon}\|_{\mathcal{H}}^2)
\\
&=-2\mathcal{F}(D_0)-\frac{\mathcal{F}(D_1)}{D_0D_1}(Z_2-Z_0)\cdot Z_0+\frac{\mathcal{F}(D_2)}{D_0D_2}(Z_0\cdot Z_2)+O(e^{-\theta_1 D}+\|\vec{\varepsilon}\|_{\mathcal{H}}^2)
\\
&=-2\mathcal{F}(D_0)+\frac{\mathcal{F}(D_1)D_0}{D_1}+\left(\frac{\mathcal{F}(D_2)}{D_0D_2}-\frac{\mathcal{F}(D_1)}{D_0D_1}\right)(Z_0\cdot Z_2)+O(e^{-\theta_1 D}+\|\vec{\varepsilon}\|_{\mathcal{H}}^2).
\end{align*}
By summarizing these discussions, this completes the proof.
\end{proof}

Moreover, by Lemma \ref{epzes}, the following lemma also holds.

\begin{lemma}\label{repcondes}
Let $\vec{u}$ be a solution of \eqref{DNKG} satisfying \eqref{sa} and \eqref{3soliass} for some $z,\ \sigma$, and $0<\delta<\delta_2$. Furthermore, we assume \eqref{Vrep}. Then, we have
\begin{align}
\left|\dot{Z}_1-2\mathcal{F}(D_1)\frac{Z_1}{D_1}-\mathcal{F}(D_2)\frac{Z_2}{D_2}-\mathcal{F}(D_0)\frac{Z_0}{D_0}\right|\lesssim e^{-\theta_1 D},\label{Z1es2}
\\
\left|\dot{Z}_2-2\mathcal{F}(D_2)\frac{Z_2}{D_2}-\mathcal{F}(D_1)\frac{Z_1}{D_1}+\mathcal{F}(D_0)\frac{Z_0}{D_0}\right|\lesssim e^{-\theta_1 D},\label{Z2es2}
\\
\left|\dot{Z}_0+2\mathcal{F}(D_0)\frac{Z_0}{D_0}+\mathcal{F}(D_1)\frac{Z_1}{D_1}-\mathcal{F}(D_2)\frac{Z_2}{D_2}\right|\lesssim e^{-\theta_1 D}.\label{Z0es2}
\end{align}
Furthermore, we have
\begin{align}
\left|\frac{d}{dt}D_1-2\mathcal{F}(D_1)-\mathcal{F}(D_2)\frac{D_1}{D_2}-\left(\frac{\mathcal{F}(D_2)}{D_1D_2}+\frac{\mathcal{F}(D_0)}{D_0D_1}\right)(Z_0\cdot Z_1)\right|\lesssim e^{-\theta_1 D},\label{D1es2}
\\
\left|\frac{d}{dt}D_2-2\mathcal{F}(D_2)-\mathcal{F}(D_1)\frac{D_2}{D_1}+\left(\frac{\mathcal{F}(D_1)}{D_1D_2}+\frac{\mathcal{F}(D_0)}{D_0D_2}\right)(Z_0\cdot Z_2)\right|\lesssim e^{-\theta_1 D}, \label{D2es2}
\\
\left|\frac{d}{dt}D_0+2\mathcal{F}(D_0)-\mathcal{F}(D_2)\frac{D_0}{D_2}-\left(\frac{\mathcal{F}(D_2)}{D_0D_2}-\frac{\mathcal{F}(D_1)}{D_0D_1}\right)(Z_0\cdot Z_1)\right|\lesssim e^{-\theta_1 D}, \label{D0es2-1}
\\
\left|\frac{d}{dt}D_0+2\mathcal{F}(D_0)-\mathcal{F}(D_1)\frac{D_0}{D_1}-\left(\frac{\mathcal{F}(D_2)}{D_0D_2}-\frac{\mathcal{F}(D_1)}{D_0D_1}\right)(Z_0\cdot Z_2)\right|\lesssim e^{-\theta_1 D}. \label{D0es2-2}
\end{align}
\end{lemma}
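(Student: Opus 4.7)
The plan is to obtain Lemma \ref{repcondes} as a direct consequence of Lemma \ref{3solicenter1} combined with the improved bounds supplied by Lemma \ref{epzes}, which are available precisely because we now assume the repulsivity condition \eqref{Vrep}. Lemma \ref{3solicenter1} already provides every inequality in the statement, except that its error term is $e^{-\theta_1 D}+\|\vec{\varepsilon}\|_{\mathcal{H}}^2$ rather than the sharper $e^{-\theta_1 D}$. Hence all that must be done is to absorb the quadratic term in $\vec{\varepsilon}$ into $e^{-\theta_1 D}$.

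First, I would invoke Lemma \ref{epzes}, whose hypotheses coincide with those of the present lemma, to get a constant $C>0$ (depending on $\vec{u}$) such that $\|\vec{\varepsilon}(t)\|_{\mathcal{H}}\leq C\mathcal{F}(D(t))$ for all $t\geq 0$. Squaring yields
\begin{equation*}
\|\vec{\varepsilon}(t)\|_{\mathcal{H}}^2\lesssim \mathcal{F}(D(t))^2.
\end{equation*}
Next, using the asymptotic estimate \eqref{Fesa} for $\mathcal{F}$, namely $\mathcal{F}(r)\lesssim r^{-(d-1)/2}e^{-r}$ for $r\gg 1$, together with the condition $D(t)\to\infty$ from \eqref{sa}, we obtain $\mathcal{F}(D)^2\lesssim D^{-(d-1)}e^{-2D}$.

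The key point is that $\theta_1$ was chosen to satisfy $\theta_1<2$, so $e^{-2D}\leq e^{-\theta_1 D}\cdot e^{-(2-\theta_1)D}$ and the second factor is bounded on any set where $D$ stays bounded below. Combining, we get
\begin{equation*}
\|\vec{\varepsilon}(t)\|_{\mathcal{H}}^2\lesssim \mathcal{F}(D(t))^2\lesssim e^{-\theta_1 D(t)}.
\end{equation*}
Substituting this bound into the right-hand side of each of \eqref{Z1es}--\eqref{D0es2} of Lemma \ref{3solicenter1} yields the estimates \eqref{Z1es2}--\eqref{D0es2-2}, completing the proof. There is no serious obstacle: the entire content of this lemma is the observation that the repulsivity hypothesis, via Lemma \ref{epzes}, upgrades the modulation error term, and the numeric condition $\theta_1<2$ built into the definition of $\theta_1$ is exactly what makes $\|\vec{\varepsilon}\|_{\mathcal{H}}^2$ dominated by $e^{-\theta_1 D}$.
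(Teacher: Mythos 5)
Your proposal is correct and follows essentially the same route as the paper: the paper's proof simply invokes Lemma \ref{epzes} and repeats the computation of Lemma \ref{3solicenter1}, and your absorption step $\|\vec{\varepsilon}\|_{\mathcal{H}}^2\lesssim \mathcal{F}(D)^2\lesssim D^{-(d-1)}e^{-2D}\lesssim e^{-\theta_1 D}$ (valid since $\theta_1<2$ and $D\gg 1$ by \eqref{sa}) is exactly the implicit content of that reference. You have merely spelled out the detail the paper leaves to the reader.
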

\begin{proof}
By Lemma \ref{epzes}, we obtain \eqref{Z1es2}-\eqref{D0es2-2} using the same argument as the proof of Lemma \ref{3solicenter1}.

\end{proof}

\subsection{Considerations on the location of the center}

Here, we estimate $D_0,\ D_1,\ D_2$. Before estimating $D_1$ and $D_2$, we define
\begin{align}
\tilde{D}=\min{(D_1,D_2)}.
\end{align}
Then the following lemma holds.

\begin{lemma}\label{dtildes}
There exist $\epsilon_1>0$ and $\delta_3>0$ depending only on $d,\ p,\ \alpha$ such that the following holds. Let $\vec{u}$ be a solution of \eqref{DNKG} satisfying \eqref{sa} and \eqref{3soliass} for some $z,\ \sigma$, and $0<\delta<\delta_3$. Furthermore, we assume that there exist $0\leq T_1\leq T_2\leq \infty$ such that for $T_1<t<T_2$ 
\begin{align}
\|\vec{\varepsilon}(t)\|_{\mathcal{H}}^2<\epsilon_1 \mathcal{F}(D(t)), \label{varepes1}
\\
|\tilde{D}(t)-D_0(t)|<\sqrt{D}(t).\label{Dass}
\end{align}
Then, for $T_1<t<T_2$, we have
\begin{align}\label{Dest}
\left( \tilde{D}(t)-D_0(t)\right)-\left( \tilde{D}(T_1)-D_0(T_1)\right)\gtrsim \int_{T_1}^t \frac{ds}{(s-T_1)+1}.
\end{align}
\end{lemma}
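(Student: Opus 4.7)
The plan is to establish the pointwise bound $\tfrac{d}{dt}(\tilde{D} - D_0) \gtrsim \mathcal{F}(D)$ on $(T_1, T_2)$ by direct differentiation via Lemma \ref{3solicenter1}, and then integrate using (the proof of) Lemma \ref{Dupperbound}. The label-swap $1 \leftrightarrow 2$, which sends $(Z_0, Z_1, Z_2) \mapsto (-Z_0, Z_2, Z_1)$ and preserves the system, makes the cases $\tilde{D} = D_1$ and $\tilde{D} = D_2$ interchangeable; the non-smoothness of $\tilde{D}$ at $D_1 = D_2$ is innocuous since $\tilde{D} - D_0$ is Lipschitz, and it suffices to establish the differential bound a.e.\ before integrating. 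Assuming $\tilde{D} = D_1 \leq D_2$ and subtracting \eqref{D0es1} from \eqref{D1es}, then using the law-of-cosines identity $Z_0 \cdot Z_1 = -D_0 D_1 \cos\theta$, where $\theta$ is the interior angle of the triangle $(z_1, z_2, z_3)$ at $z_1$, the leading part of $\tfrac{d}{dt}(D_1 - D_0)$ collapses to
\[
(\mathcal{F}(D_0) + \mathcal{F}(D_1))(2 - \cos\theta) + \mathcal{F}(D_2)\,\tfrac{(D_1 - D_0)(1 + \cos\theta)}{D_2}.
\]

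Under $|\tilde{D} - D_0| < \sqrt{D}$ one has $D_0, D_1 \in [D, D + \sqrt{D}]$, and combined with $D_2 \geq D$ the law of cosines gives $\cos\theta \leq 1/2 + O(1/\sqrt{D})$, so $2 - \cos\theta \geq 1$ once $D$ is large (which follows from $\delta_3$ small). Crucially, $\tilde{D} = D_1 \leq D_2$ implies $D = \min(D_0, D_1, D_2) = \min(D_0, D_1)$, so $\mathcal{F}(D_0) + \mathcal{F}(D_1) \geq \mathcal{F}(\min(D_0, D_1)) = \mathcal{F}(D)$. The second piece above is $O(\mathcal{F}(D)/\sqrt{D})$ by $|D_0 - D_1| \leq \sqrt{D}$ and $D_2 \geq D$, while the error $O(e^{-\theta_1 D} + \|\vec{\varepsilon}\|_{\mathcal{H}}^2)$ is bounded by $O\bigl((\epsilon_1 + \mathcal{F}(D)^{\theta_1 - 1})\mathcal{F}(D)\bigr)$ using the standing hypothesis \eqref{varepes1} and $\theta_1 > 1$. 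Choosing $\epsilon_1, \delta_3$ small enough depending only on $d, p, \alpha$ absorbs all subleading contributions into the positive main term, giving $\tfrac{d}{dt}(\tilde{D} - D_0) \geq c\mathcal{F}(D)$ for an absolute $c > 0$. For the integration step, re-running the argument of Lemma \ref{Dupperbound} starting at time $T_1$ yields $\mathcal{F}(D(s))^{-1} \lesssim \mathcal{F}(D(T_1))^{-1} + C_1(s - T_1)$; after absorbing $\mathcal{F}(D(T_1))^{-1}$ into the implicit constant (allowed to depend on $\vec{u}$), one obtains $\mathcal{F}(D(s)) \gtrsim 1/((s - T_1) + 1)$, and integrating the pointwise bound yields the claim.

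The main obstacle is the algebraic bookkeeping that extracts the clean non-negative structure $(\mathcal{F}(D_0) + \mathcal{F}(D_1))(2 - \cos\theta)$: several terms in \eqref{D1es} and \eqref{D0es1} are of comparable exponential size and nearly cancel, and only after the cosine substitution does the positive residue become manifest. A related subtlety is that $\mathcal{F}(D_0)$ and $\mathcal{F}(D_1)$ can differ by a factor of order $e^{O(\sqrt{D})}$ even in the hypothesis region (since $\mathcal{F}$ is of exponential type and $|D_0 - D_1|$ may be as large as $\sqrt{D}$), so one must not lower bound the sum by $\mathcal{F}(\max(D_0, D_1)) \sim \mathcal{F}(D + \sqrt{D})$, which would be exponentially too small; the correct bound $\geq \mathcal{F}(D)$ is available only after exploiting the case assumption $\tilde{D} = D_1 \leq D_2$ to pin down that $\min(D_0, D_1)$ coincides with $D$.
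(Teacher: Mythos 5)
Your proposal is correct and follows essentially the same route as the paper: subtract the $D_0$-equation from the $D_1$-equation (Lemma \ref{3solicenter1}), use $|Z_0\cdot Z_1|\leq D_0D_1$ (your cosine substitution is exactly this Cauchy--Schwarz step), exploit $D=\min(D_0,D_1)$ in the case $D_1\leq D_2$ to get the main term $\mathcal{F}(D_0)+\mathcal{F}(D_1)\geq\mathcal{F}(D)$, absorb the $O(\mathcal{F}(D)/\sqrt{D})$ and error terms by smallness of $\epsilon_1,\delta_3$, and integrate against the lower bound $\mathcal{F}(D)\gtrsim((s-T_1)+1)^{-1}$ from Lemma \ref{Dupperbound}. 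Your explicit handling of the a.e.\ differentiability of $\tilde{D}$ and of the time-shifted constant are minor refinements of points the paper leaves implicit.
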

\begin{proof}
First, we assume that $D_1(t)<D_2(t)$. Then by \eqref{D1es} and \eqref{D0es1}, we have
\begin{align}\label{sanoes1}
\begin{aligned}
\frac{d}{dt}(D_1-D_0)&=2(\mathcal{F}(D_1)+\mathcal{F}(D_0))+\frac{\mathcal{F}(D_2)}{D_2}(D_1-D_0)
\\
&\quad +\frac{\mathcal{F}(D_2)(D_0-D_1)}{D_0D_1D_2}(Z_0\cdot Z_1)+\frac{\mathcal{F}(D_0)+\mathcal{F}(D_1)}{D_0D_1}(Z_0\cdot Z_1)
\\
&\quad +O(e^{-\theta_1 D}+\|\vec{\varepsilon}\|_{\mathcal{H}}^2).
\end{aligned}
\end{align}
By using the Cauchy-Schwarz inequality, we have
\begin{align*}
-D_0D_1\leq Z_0\cdot Z_1\leq D_0D_1.
\end{align*}
By \eqref{Dass}, there exists $C>0$ depending only on $d,\ p,\ \alpha$ such that
\begin{align*}
\frac{d}{dt}(D_1-D_0)&\geq 2(\mathcal{F}(D_1)+\mathcal{F}(D_0))-(\mathcal{F}(D_0)+\mathcal{F}(D_1))
\\
&\quad-\frac{2\mathcal{F}(D_2)}{\sqrt{D_2}}-C\left(e^{-\theta_1 D}+\|\vec{\varepsilon}\|_{\mathcal{H}}^2\right).
\end{align*}
Notably, we have $D=\min{(D_0,D_1)}$. In addition, when we choose $\epsilon_1>0$ and $\delta_3>0$ sufficiently small, by \eqref{sa}, and \eqref{varepes1}, we obtain
\begin{align}\label{Dlemes1}
\frac{2\mathcal{F}(D_2)}{\sqrt{D_2}}+C\left(e^{-\theta_1 D}+\|\vec{\varepsilon}\|_{\mathcal{H}}^2\right)<\frac{1}{2}\mathcal{F}(D).
\end{align}
We note that by this argument, $\epsilon_1$ and $\delta_3$ depend only on $d,\ p,\ \alpha$. By \eqref{Dlemes1}, we obtain 
\begin{align}\label{Dnosaesa}
\frac{d}{dt}(D_1-D_0)&\geq \frac{1}{2}\mathcal{F}(D).
\end{align}
Second, we assume $D_2\leq D_1$. Then by a similar calculation, we obtain 
\begin{align}\label{Dnosaesb}
\frac{d}{dt}(D_2-D_0)&\geq \frac{1}{2}\mathcal{F}(D).
\end{align}
By Lemma \ref{Dupperbound}, \eqref{Dnosaesa}, and \eqref{Dnosaesb}, we have
\begin{align}\label{Dnosaesc}
\frac{d}{dt}(\tilde{D}-D_0)\gtrsim \mathcal{F}(D)\gtrsim \frac{1}{t-T_1+1}.
\end{align}
By \eqref{Dnosaesc}, we complete the proof.

\end{proof}

\subsection{The distance between solitons with the same sign}
Here, we consider a 3-solitary wave. In this subsection, we show that the distance between solitons with the same sign is sufficiently large.
\begin{lemma}\label{repcondlem}
Let $\vec{u}$ be a solution of \eqref{DNKG} satisfying \eqref{sa} and \eqref{3soliass} for some $z,\ \sigma$, and $0<\delta<\delta_3$. Then, we have
\begin{align}\label{d0long}
\lim_{t\to\infty}\left(D_0(t)-\tilde{D}(t)\right)=\infty.
\end{align}
\end{lemma}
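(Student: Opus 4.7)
I argue by contradiction: assume there exist $M_0 > 0$ and $t_n \to \infty$ with $D_0(t_n) - \tilde{D}(t_n) \leq M_0$. For $t$ large, Lemma \ref{limeplem} gives $\|\vec{\varepsilon}(t)\|_{\mathcal{H}}^2 < \epsilon_1 \mathcal{F}(D(t))$; the 3-solitary wave condition \eqref{sa} forces $D(t) \to \infty$ (so $\sqrt{D(t_n)} > M_0 + 1$ for $n$ large and $D_0(t) \to \infty$); and Lemma \ref{Dupperbound} provides $\mathcal{F}(D(t)) \geq C_{\star}/(t+1)$. The strategy is to rule out the ``bad regime'' $\mathcal{R} = \{t : \tilde{D}(t) - D_0(t) \geq \sqrt{D(t)}\}$ and then use Lemma \ref{dtildes} starting from each $t_n$ to obtain a contradiction.

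\emph{Step 1: $\mathcal{R}$ is never entered.} On $\mathcal{R}$, $D = D_0$ and $\tilde{D} \geq D_0 + \sqrt{D_0}$, so the decay law \eqref{Fesa} yields $\mathcal{F}(\tilde{D}) \leq C e^{-\sqrt{D_0}} \mathcal{F}(D_0) = o(\mathcal{F}(D_0))$ as $D_0 \to \infty$. The triangle inequality together with $D_0 \leq D_1, D_2$ gives $D_1 \leq 2 D_2$ and vice versa, controlling the cross terms. Substituting into \eqref{D0es1} and absorbing the $O(e^{-\theta_1 D_0} + \|\vec{\varepsilon}\|^2)$ error into $o(\mathcal{F}(D_0))$,
\[
\dot{D}_0 \leq -(2 - o(1))\mathcal{F}(D_0).
\]
In \eqref{D1es}--\eqref{D2es} the only contribution to $\dot{D}_k$ ($k = 1, 2$) not of size $o(\mathcal{F}(D_0))$ is the cross term $(\mathcal{F}(D_0)/(D_0 D_k))(Z_0 \cdot Z_k)$, of modulus at most $\mathcal{F}(D_0)$; hence $\dot{\tilde{D}} \geq -(1 + o(1))\mathcal{F}(D_0)$. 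Letting $X = \tilde{D} - D_0 - \sqrt{D}$ (so $\mathcal{R} = \{X \geq 0\}$) and noting $\dot{\sqrt{D}} = \dot{D}_0/(2\sqrt{D_0})$ on $\mathcal{R}$,
\[
\dot{X} = \dot{\tilde{D}} - \dot{D}_0\bigl(1 + \tfrac{1}{2\sqrt{D_0}}\bigr) \geq \mathcal{F}(D_0)\bigl(1 + \tfrac{1}{\sqrt{D_0}} - o(1)\bigr) \geq \tfrac{1}{2}\mathcal{F}(D_0) > 0
\]
for $D_0$ large. Hence $\mathcal{R}$ is absorbing: once entered at some time $T$, the system remains in $\mathcal{R}$ for all $t \geq T$. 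But then $\dot{D}_0 \leq -\mathcal{F}(D_0) \leq -C_{\star}/(t+1)$ on $[T, \infty)$, integrating to $D_0(t) \to -\infty$, contradicting $D_0 \geq 0$. Therefore $\mathcal{R}$ is never entered.

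\emph{Step 2: Lemma \ref{dtildes} yields the contradiction.} By Step 1, $\tilde{D}(t_n) - D_0(t_n) < \sqrt{D(t_n)}$; combined with $D_0(t_n) - \tilde{D}(t_n) \leq M_0 < \sqrt{D(t_n)}$, this gives $|\tilde{D} - D_0|(t_n) < \sqrt{D(t_n)}$, the hypothesis of Lemma \ref{dtildes}. On the maximal forward interval $[t_n, T_2]$ on which this hypothesis persists, Lemma \ref{dtildes} provides $(\tilde{D} - D_0)(t) \geq -M_0 + c \log((t - t_n) + 1)$, which is unbounded in $t$, while $\sqrt{D(t)} \lesssim \sqrt{\log t}$ by Lemma \ref{Dupperbound}. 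Hence $T_2 < \infty$, and the exit must occur at $(\tilde{D} - D_0)(T_2) = \sqrt{D(T_2)}$ (not $-\sqrt{D(T_2)}$, since $\tilde{D} - D_0$ has grown from $\geq -M_0$). The system thus enters $\mathcal{R}$ just after $T_2$, contradicting Step 1, and the proof is complete.

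\emph{Main obstacle.} The delicate step is establishing $\dot{X} > 0$ on $\mathcal{R}$: both $\dot{D}_0$ and $\dot{\tilde{D}}$ are of size $\Theta(\mathcal{F}(D_0))$ there, so positivity hinges on the factor-of-two gap between the leading coefficients $2$ (in $\dot{D}_0 \leq -(2 - o(1))\mathcal{F}(D_0)$) and $1$ (in $\dot{\tilde{D}} \geq -(1 + o(1))\mathcal{F}(D_0)$). This requires the sharp exponential gap $\mathcal{F}(\tilde{D})/\mathcal{F}(D_0) \leq C e^{-\sqrt{D_0}}$ — a consequence of the $\sqrt{D}$ threshold defining $\mathcal{R}$ together with the precise asymptotic \eqref{Fesa} — so that all subleading contributions ($\mathcal{F}(\tilde{D})$, $e^{-\theta_1 D_0}$, and $\|\vec{\varepsilon}\|^2$) can be absorbed uniformly into $o(\mathcal{F}(D_0))$ as $D_0 \to \infty$.
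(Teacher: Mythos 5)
Your proof is correct, but it takes a genuinely different route from the paper's at the decisive step. Both arguments share the same endgame: assume $\tilde D - D_0 \geq -M$ along a sequence $t_n\to\infty$, invoke Lemma \ref{dtildes} to force $\tilde D - D_0$ to grow logarithmically, and derive a contradiction from the fact that it cannot cross a certain upper threshold. The difference is how that threshold is enforced. The paper runs a bootstrap $-M\leq \tilde D-D_0\leq c_2$ with a \emph{fixed constant} $c_2$ (chosen so that $\mathcal{F}(\tilde D)<\tfrac14\mathcal{F}(D_0)$ once $\tilde D-D_0\geq c_2$) and excludes the upper exit by an \emph{energy} argument: at such a configuration $V<-\tfrac12\mathcal{F}(D)$, so the Hamiltonian expansion of Lemma \ref{Hamilem} together with $\|\vec{\varepsilon}\|_{\mathcal{H}}^2=o(\mathcal{F}(D))$ gives $E(\vec u)<3E(Q,0)$, impossible for a $3$-solitary wave since the energy decreases to $3E(Q,0)$. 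You instead exclude the regime $\tilde D-D_0\geq\sqrt D$ by a purely \emph{dynamical} argument: there $D=D_0$, the attraction between the two same-sign solitons dominates ($\dot D_0\leq-(2-o(1))\mathcal{F}(D_0)$ while $\dot{\tilde D}\geq-(1+o(1))\mathcal{F}(D_0)$, the gap coming from $\mathcal{F}(\tilde D)\lesssim e^{-\sqrt{D_0}}\mathcal{F}(D_0)$), the regime is forward-invariant, and the non-integrable lower bound $\mathcal{F}(D_0)=\mathcal{F}(D)\gtrsim (t+1)^{-1}$ of Lemma \ref{Dupperbound} then drives $D_0$ to $-\infty$ (equivalently, keeps $D=D_0$ bounded, contradicting \eqref{sa}). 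Your route avoids Lemma \ref{Hamilem} and the variational input $E(\vec u)\geq 3E(Q,0)$ entirely, at the price of the more delicate invariance computation; the paper's energy argument is shorter and works with a constant threshold. Two cosmetic points: since $\tilde D$ and $D$ are only Lipschitz, your differential inequalities for $\dot{\tilde D}$ and $\dot X$ should be read in the a.e.\ or Dini sense (as the paper implicitly does in Lemma \ref{dtildes}), and on $\mathcal{R}$ it is cleaner to work with $Y=\tilde D-D_0-\sqrt{D_0}$, which coincides with your $X$ there and whose square-root term is genuinely $C^1$.
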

\begin{proof}
We fix $M>0$ and assume that there exists a sequence $t_n\to \infty$ such that 
\begin{align}\label{replemass1}
D_0(t_n)\leq \tilde{D}(t_n)+M.
\end{align}
Then, we choose $N\in \mathbb{N}$ that $t_N$ satisfies for all $t>t_N$
\begin{align}\label{epsmall}
\|\vec{\varepsilon}\|_{\mathcal{H}}^2\ll \epsilon_1\mathcal{F}(D)
\end{align}
and \eqref{Dass} hold. Then we introduce  the following bootstrap estimate
\begin{align}\label{belem1}
-M\leq \tilde{D}-D_0\leq c_2,
\end{align}
where $c_2>0$ is a constant such that for all $\tilde{D}-D_0\geq c_2$
\begin{align}\label{repasslem2}
\mathcal{F}(\tilde{D})<\frac{1}{4}\mathcal{F}(D_0).
\end{align}
We define $T\in [t_N,\infty]$ by
\begin{align}\label{bsdef1}
T=\sup{\{ t\in[t_N, \infty)\ \mbox{such that holds}\ \eqref{belem1}\ \mbox{on}\ [t_N,t] \}} .
\end{align}
Furthermore, we assume that $T<\infty$. Then we have
\begin{align*}
\tilde{D}(T)-D_0(T)=-M\ \mbox{or}\ \tilde{D}(T)-D_0(T)=c_2.
\end{align*}
By Lemma \ref{dtildes}, if $\tilde{D}(T)-D_0(T)=-M$, we have $\tilde{D}(t)-D_0(t)\geq -M$ for $T<t<T+\epsilon$, where $0<\epsilon\ll 1$ and it contradicts to \eqref{bsdef1}. If $\tilde{D}(T)-D_0(T)=c_2$, by the definition of $c_2$, we have
\begin{align*}
\mathcal{F}(D_1(T))+\mathcal{F}(D_2(T))-\mathcal{F}(D_0(T))\leq 2\mathcal{F}(\tilde{D}(T))-\mathcal{F}(D(T))<-\frac{1}{2}\mathcal{F}(D(T)).
\end{align*}
Then by Lemma \ref{Hamilem} and \eqref{epsmall}, we have $E(\vec{u}(T))<3E(Q,0)$, which contradicts that $\vec{u}$ is a 3-solitary wave. Therefore, we obtain $T=\infty$. On the other hand, by Lemma \ref{dtildes}, we have for all $t_N<t$,
\begin{align*}
\left(\tilde{D}(t)-D_0(t)\right)-\left(\tilde{D}(t_N)-D_0(t_N)\right)\gtrsim \int_{t_N}^t \frac{ds}{s-t_N+1}.
\end{align*}
When we consider $t\to\infty$, we obtain $\tilde{D}(t)-D_0(t)\to \infty$, which contradicts to \eqref{belem1}. Gathering these arguments, we complete the proof.
\end{proof}

In particular, when $D_0\geq \tilde{D}+M$ holds for a large constant $M>0$, we have
\begin{align*}
\frac{1}{2}\mathcal{F}(\tilde{D})=\frac{1}{2}\mathcal{F}(D)>\mathcal{F}(D_0).
\end{align*}
Therefore we obtain for $t\gg 1$
\begin{align*}
\mathcal{F}(D_1)+\mathcal{F}(D_2)-\mathcal{F}(D_0)> \frac{1}{2}\mathcal{F}(D).
\end{align*}
Therefore we obtain the following lemma:
\begin{lemma}\label{3solirep}
Let $\vec{u}$ be a solution of \eqref{DNKG} satisfying \eqref{sa} and \eqref{3soliass} for some $z,\ \sigma$, and $0<\delta<\delta_3$. Then $\vec{u}$ satisfies \eqref{Vrep}.
\end{lemma}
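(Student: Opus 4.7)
The plan is to unwind the definition of $V$ under the sign hypothesis \eqref{3soliass} and then harvest the geometric separation produced by Lemma \ref{repcondlem}. Since $\sigma_1\sigma_2=+1$ while $\sigma_1\sigma_3=\sigma_2\sigma_3=-1$, the definition \eqref{Vdef} collapses to
\begin{align*}
V=\mathcal{F}(D_1)+\mathcal{F}(D_2)-\mathcal{F}(D_0).
\end{align*}
Thus the only danger for \eqref{Vrep} is that $D_0$ becomes comparable to (or smaller than) $\tilde D=\min(D_1,D_2)$, in which case $-\mathcal{F}(D_0)$ could cancel the positive contributions. So the whole game is to show that $D_0$ is significantly larger than $\tilde D$ for large $t$, which is exactly what Lemma \ref{repcondlem} delivers.

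Concretely, the first step is to fix $M>0$ large enough so that the exponential decay encoded in \eqref{Fesa} yields, for any $r\geq 1$,
\begin{align*}
\mathcal{F}(r+M)\leq \tfrac{1}{2}\mathcal{F}(r).
\end{align*}
Next, I apply Lemma \ref{repcondlem} with this $M$ to obtain a time $T_M>0$ such that $D_0(t)\geq \tilde D(t)+M$ for every $t\geq T_M$. In particular, $D(t)=\min(D_0(t),\tilde D(t))=\tilde D(t)$ on $[T_M,\infty)$, and by the choice of $M$,
\begin{align*}
\mathcal{F}(D_0(t))\leq \tfrac{1}{2}\mathcal{F}(\tilde D(t))=\tfrac{1}{2}\mathcal{F}(D(t)).
\end{align*}

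Combining these facts, for $t\geq T_M$,
\begin{align*}
V(t)=\mathcal{F}(D_1(t))+\mathcal{F}(D_2(t))-\mathcal{F}(D_0(t))\geq \mathcal{F}(\tilde D(t))-\tfrac{1}{2}\mathcal{F}(\tilde D(t))=\tfrac{1}{2}\mathcal{F}(D(t)),
\end{align*}
which immediately gives $\liminf_{t\to\infty} V(t)/\mathcal{F}(D(t))\geq 1/2>0$, i.e.\ \eqref{Vrep}. There is essentially no obstacle here: all the hard analytic work (the bootstrap, the Hamiltonian argument showing $E(\vec u)\geq 3E(Q,0)$ would be violated if $V$ stayed negative, and the lower bound \eqref{Dest} on $\tilde D-D_0$) has already been done inside Lemma \ref{repcondlem}. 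The present lemma is just the translation of the geometric statement ``the same-sign pair is the farthest apart'' into the analytic statement ``$V$ is coercive relative to $\mathcal{F}(D)$''.
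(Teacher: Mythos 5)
Your proof is correct and follows essentially the same route as the paper: both reduce \eqref{Vrep} to the bound $\mathcal{F}(D_0)\leq\frac{1}{2}\mathcal{F}(D)$ for large $t$, obtained from Lemma \ref{repcondlem} together with the exponential decay \eqref{Fesa} of $\mathcal{F}$. Your write-up merely makes explicit the choice of $M$ that the paper leaves implicit.
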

\begin{proof}
By Lemma \ref{repcondlem}, we have for large $t$,
\begin{align*}
\mathcal{F}(D_0)<\frac{1}{2}\mathcal{F}(D).
\end{align*}
Thus, we have
\begin{align*}
V=\mathcal{F}(D_1)+\mathcal{F}(D_2)-\mathcal{F}(D_0)>\frac{1}{2}\mathcal{F}(D),
\end{align*}
which implies \eqref{Vrep}.
\end{proof}

\subsection{Dynamics of 3-solitons under the repulsive condition}
In this subsection, we estimate the difference between $D_1$ and $D_2$. We define $\theta_2$ as
\begin{align}\label{theta2}
\theta_2=\frac{\theta_1-1}{2}>0.
\end{align}
\begin{lemma}\label{Dnosalem}
Let $\vec{u}$ be a solution of \eqref{DNKG} satisfying \eqref{sa} and \eqref{3soliass} for some $z,\ \sigma$, and $0<\delta<\delta_3$. Then, we have
\begin{align}\label{Dnosaes}
\left|D_1(t)-D_2(t)\right|\lesssim t^{-\theta_2}.
\end{align}
Furthermore, we have 
\begin{align}
|D_1(t)-D(t)|+|D_2(t)-D(t)|&\lesssim t^{-\theta_2},\label{Dnosaes2}
\\
\mathcal{F}(D(t))&\sim \frac{1}{t+1}.\label{logtorder}
\end{align}
\end{lemma}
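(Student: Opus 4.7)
The plan is to analyze the ODE system of Lemma \ref{repcondes} directly, exploiting the three features available under the repulsivity condition: $\|\vec\varepsilon\|_{\mathcal H}\lesssim \mathcal F(D)$ (Lemma \ref{epzes}), $\mathcal F(D_0)\ll \mathcal F(D)$ (Lemma \ref{repcondlem}), and $e^{-\theta_1 D}\ll \mathcal F(D)$ (since $\theta_1>1$). Projecting the equations \eqref{Z1es2}--\eqref{Z2es2} for $\dot Z_1, \dot Z_2$ onto the unit vectors $Z_1/D_1$ and $Z_2/D_2$ produces the cleaner form
\begin{align*}
\dot D_1 &= 2\mathcal F(D_1) + c\,\mathcal F(D_2) + O\bigl(\mathcal F(D_0) + e^{-\theta_1 D}\bigr),\\
\dot D_2 &= 2\mathcal F(D_2) + c\,\mathcal F(D_1) + O\bigl(\mathcal F(D_0) + e^{-\theta_1 D}\bigr),
\end{align*}
with $c = (Z_1\cdot Z_2)/(D_1 D_2)\in[-1,1]$. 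Subtracting and adding these identities yields
\begin{align*}
\frac{d}{dt}(D_2 - D_1) &= (2 - c)\bigl(\mathcal F(D_2) - \mathcal F(D_1)\bigr) + O\bigl(\mathcal F(D_0) + e^{-\theta_1 D}\bigr),\\
\frac{d}{dt}(D_1 + D_2) &= (2 + c)\bigl(\mathcal F(D_1) + \mathcal F(D_2)\bigr) + O\bigl(\mathcal F(D_0) + e^{-\theta_1 D}\bigr).
\end{align*}

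For \eqref{Dnosaes}, the coefficient $2-c\geq 1$ combined with the strict monotonicity of $\mathcal F$ makes the first identity a contraction for $|D_1 - D_2|$. Using \eqref{Fesa}--\eqref{Fesb} I would derive the quantitative bound $|\mathcal F(D_1) - \mathcal F(D_2)|\gtrsim \mathcal F(D)\min(|D_1 - D_2|,\, 1)$. A two-stage argument follows: first, as long as $|D_1 - D_2|\geq 1$, the contraction decreases it at rate $\gtrsim \mathcal F(D)\gtrsim 1/t$ by Lemma \ref{Dupperbound}, which is summable to $\log t$ and forces $|D_1 - D_2|<1$ in finite time; second, once $|D_1 - D_2|<1$, the linearized estimate
\[ \frac{d}{dt}|D_1 - D_2|\leq -c_1\,\mathcal F(D)\,|D_1 - D_2| + o(\mathcal F(D)) \]
combined with $\int^t \mathcal F(D)\,d\tau\gtrsim \log t$ gives $|D_1 - D_2|\lesssim t^{-\theta_3}$ for some $\theta_3>0$ via Grönwall. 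Estimate \eqref{Dnosaes2} is then immediate since Lemma \ref{repcondlem} forces $D = \min(D_1, D_2)$ for large $t$, so that $|D_k - D|\leq |D_1 - D_2|$.

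For \eqref{logtorder}, the lower bound $\mathcal F(D)\geq C_\star/(t+1)$ is already Lemma \ref{Dupperbound}. The matching upper bound follows from the sum identity above together with \eqref{Dnosaes}: since $D_1\approx D_2\approx D$ for large $t$, we obtain $\dot D = (2+c)\mathcal F(D) + o(\mathcal F(D))$ with $2+c\in[1,3]$, so $\tfrac{1}{2}\mathcal F(D)\leq \dot D\leq 4\,\mathcal F(D)$ eventually. Integrating against $G(r) = \int_1^r ds/\mathcal F(s)$ (which satisfies $G(r)\sim r^{(d-1)/2}e^r$ as in the proof of Lemma \ref{Dupperbound}) gives $G(D(t))\sim t$, whence $\mathcal F(D)\sim 1/(t+1)$.

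The main technical difficulty is controlling the error $O(\mathcal F(D_0))$: Lemma \ref{repcondlem} yields only the qualitative $\mathcal F(D_0)/\mathcal F(D)\to 0$, whereas the polynomial rate in \eqref{Dnosaes} needs $\mathcal F(D_0)/\mathcal F(D)\lesssim t^{-\epsilon}$ for some $\epsilon>0$. I expect to close this gap by a coupled bootstrap: once $|D_1 - D_2|$ is under control, the configuration of $z_1, z_3, z_2$ is close to collinear, and the $\dot D_0$ equation from \eqref{D0es2-1}--\eqref{D0es2-2} combined with the $\dot D$ equation above forces $D_0 - D \gtrsim \log t$, which yields $\mathcal F(D_0)/\mathcal F(D)\lesssim 1/t$ — more than enough for the bootstrap to close and the polynomial rate $\theta_3$ to emerge.
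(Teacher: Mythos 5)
Your overall architecture (bootstrap to make $|D_1-D_2|$ small, a two-sided bound $\frac{d}{dt}D\sim\mathcal F(D)$ for \eqref{logtorder}, then Gr\"onwall for the polynomial rate) matches the paper's, and your treatment of \eqref{logtorder} and \eqref{Dnosaes2} is fine. The gap is precisely where you flag ``the main technical difficulty'': recording the $\mathcal F(D_0)$ contributions to $\dot D_1,\dot D_2$ as a forcing error $O(\mathcal F(D_0))$ is too coarse, and it destroys an exact cancellation. With $c=(Z_1\cdot Z_2)/(D_1D_2)$ and $Z_0=Z_2-Z_1$, the equations \eqref{D1es2}--\eqref{D2es2} give $\dot D_1=2\mathcal F(D_1)+c\,\mathcal F(D_2)+\tfrac{\mathcal F(D_0)}{D_0}(cD_2-D_1)+O(e^{-\theta_1D})$ and $\dot D_2=2\mathcal F(D_2)+c\,\mathcal F(D_1)+\tfrac{\mathcal F(D_0)}{D_0}(cD_1-D_2)+O(e^{-\theta_1D})$, so in the difference the $\mathcal F(D_0)$ terms combine into
\[
-\frac{\mathcal F(D_0)}{D_0}\,(1+c)\,(D_1-D_2),
\]
a \emph{damping} term proportional to $D_1-D_2$ with non-positive contribution to $\frac{d}{dt}|D_1-D_2|$ (since $1+c\ge 0$), not a forcing term. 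The only genuine forcing is $O(e^{-\theta_1D})\lesssim\mathcal F(D)^{\theta_1}\lesssim t^{-\theta}$ with $\theta>1$, which already carries a polynomial rate; no quantitative control of $\mathcal F(D_0)/\mathcal F(D)$ is needed. This identity is \eqref{d1d2saes} in the paper and is the crux of the proof.

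The substitute you propose --- extracting $\mathcal F(D_0)/\mathcal F(D)\lesssim t^{-\epsilon}$ from near-collinearity --- does not work. First, smallness of $|D_1-D_2|$ does not imply collinearity: an isoceles configuration has $D_1=D_2$ with $D_0$ anywhere in $(0,2D_1)$, so ``once $|D_1-D_2|$ is under control the configuration is close to collinear'' is false. Second, the quantitative separation $D_0-D\gtrsim\log t$ (in the paper, $D_0\ge\tfrac53 D$, giving \eqref{D0esnew}) is only established in Lemma \ref{modifiedZ}, whose proof relies on \eqref{logtorder} and on Lemma \ref{3soliZesrefine}, both downstream of the present lemma; Lemma \ref{repcondlem} alone yields $D_0-\tilde D\to\infty$ with no rate. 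The bootstrap you sketch is therefore circular. Replacing the $O(\mathcal F(D_0))$ error by the exact expression above removes the difficulty entirely and the rest of your argument goes through.
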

\begin{proof}
By Lemma \ref{3solirep}, we have \eqref{Vrep}. Then, by Lemma \ref{repcondes} and $Z_2-Z_1=Z_0$, we have
\begin{align*}
\frac{d}{dt}D_1&=2\mathcal{F}(D_1)+\mathcal{F}(D_2)\frac{D_1}{D_2}+\left( \frac{\mathcal{F}(D_2)}{D_1D_2}+\frac{\mathcal{F}(D_0)}{D_0D_1}\right)(Z_0\cdot Z_1)+O(e^{-\theta_1D})
\\
&=2\mathcal{F}(D_1)-\mathcal{F}(D_0)\frac{D_1}{D_0}+\left(\frac{\mathcal{F}(D_2)}{D_1D_2}+\frac{\mathcal{F}(D_0)}{D_0D_1}\right)(Z_1\cdot Z_2)+O(e^{-\theta_1 D}),
\\
\frac{d}{dt}D_2&=2\mathcal{F}(D_2)+\mathcal{F}(D_1)\frac{D_2}{D_1}-\left(\frac{\mathcal{F}(D_1)}{D_1D_2}+\frac{\mathcal{F}(D_0)}{D_0D_2}\right)(Z_0\cdot Z_2)+O(e^{-\theta_1 D})
\\
&=2\mathcal{F}(D_2)-\mathcal{F}(D_0)\frac{D_2}{D_0}+\left(\frac{\mathcal{F}(D_1)}{D_1D_2}+\frac{\mathcal{F}(D_0)}{D_0D_2}\right)(Z_1\cdot Z_2)+O(e^{-\theta_1 D}).
\end{align*}
Therefore we obtain
\begin{align}\label{d1d2saes}
\begin{aligned}
\frac{d}{dt}(D_1-D_2)&=\left( \mathcal{F}(D_1)-\mathcal{F}(D_2)\right)\left(2-\frac{Z_1\cdot Z_2}{D_1D_2}\right)
\\
&\quad -\frac{\mathcal{F}(D_0)}{D_0}\left(1+\frac{Z_1\cdot Z_2}{D_1D_2}\right)(D_1-D_2)+O(e^{-\theta_1 D}).
\end{aligned}
\end{align}
The proof of Lemma \ref{Dnosalem} proceeds in three steps.

First, we show that there exist $T>0$ and a small constant $\epsilon>0$ such that for all $t>T$
\begin{align}\label{upperepsilon}
|D_1(t)-D_2(t)|\leq \epsilon.
\end{align}
By Lemma \ref{repcondlem}, $\tilde{D}=D$ holds true for sufficiently large $t>0$. Furthermore there exists $T_1>0$ such that for $t>T_1$
\begin{align}\label{D0okii}
\mathcal{F}(D_0)<\frac{1}{4}\mathcal{F}(D).
\end{align}
Now, we assume that there exists a sequence $t_n\to\infty$ such that 
\begin{align*}
|D_1(t_n)-D_2(t_n)|>\epsilon.
\end{align*}
Then we choose large $N\in \mathbb{N}$ satisfying $e^{-\theta_1 D}<\epsilon^2\mathcal{F}(D)$ for $t>t_N>T_1$, and introduce the following bootstrap estimate
\begin{align}\label{d1d2sabs}
|D_1(t)-D_2(t)|\geq \epsilon.
\end{align}
We define $T_2\in [t_N,\infty]$ as 
\begin{align}\label{bsdef2}
T_2=\sup{\{ t\in[t_N, \infty)\ \mbox{such that holds}\ \eqref{d1d2sabs}\ \mbox{on}\ [t_N,t] \}} .
\end{align}
We assume $T_2<\infty$. Then $D_1$ and $D_2$ satisfy
\begin{align*}
|D_1(T_2)-D_2(T_2)|=\epsilon.
\end{align*}
Then by \eqref{d1d2saes} we have $\left(\frac{d}{dt}|D_1-D_2|\right)(T_2)\leq  0$, which contradicts to \eqref{bsdef2}. Thus we obtain $T_2=\infty$ and we obtain \eqref{upperepsilon} for $t\gg 1$.

Second, we show \eqref{logtorder}. By Lemma \ref{repcondlem}, and \eqref{upperepsilon}, if $D_1\leq D_2$ for $t\gg 1$ we have $D=D_1$ and $D_2<D_0$ and 
\begin{align*}
\frac{d}{dt}D_1\geq 2\mathcal{F}(D_1)-2\mathcal{F}(D_0)-\mathcal{F}(D_2)-Ce^{-\theta_1 D}
\end{align*}
for some $C>0$. Furthermore by \eqref{D0okii} and $e^{-\theta_1 D}\ll \mathcal{F}(D)$ for $t\gg1$, we have for $t\gg 1$
\begin{align*}
\frac{d}{dt}D_1\geq \frac{1}{3}\mathcal{F}(D_1).
\end{align*}
By the same argument, if $D_2\leq D_1$ we have for $t\gg 1$
\begin{align*}
\frac{d}{dt}D_2\geq \frac{1}{3}\mathcal{F}(D_2).
\end{align*}
Thus, we have for $t\gg 1$
\begin{align*}
\frac{d}{dt}D\geq \frac{1}{3}\mathcal{F}(D).
\end{align*}
Therefore we have 
\begin{align*}
\frac{d}{dt}D\sim \mathcal{F}(D),
\end{align*}
which implies \eqref{logtorder}.

Last, we show \eqref{Dnosaes} and \eqref{Dnosaes2}. By \eqref{Fesb}, \eqref{mathcalFes}, and \eqref{upperepsilon}, we have
\begin{align}\label{mathcalFsimrev}
\mathcal{F}(D_1(t))-\mathcal{F}(D_2(t))\sim \frac{1}{t+1}\left(D_2(t)-D_1(t)\right).
\end{align}
By \eqref{d1d2saes} and \eqref{mathcalFsimrev}, there exist $c_1>0$ and $c_2>0$ such that for $t\gg 1$ 
\begin{align}\label{Dnosaesr1}
\frac{d}{dt}|D_1-D_2|\leq -\frac{c_1}{t}|D_1-D_2|+c_2t^{-\theta_2-1}.
\end{align}
We note that $\theta_2$ satisfies
\begin{align*}
e^{-\theta_1 D}\lesssim \mathcal{F}(D)^{\theta_2+1},
\end{align*}
and \eqref{Dnosaesr1} holds true. We also note that $c_1$ is independent of $\theta_2$. Then, by the Gronwall's inequality, we have for $t>T\gg 1$
\begin{align*}
|D_1(t)-D_2(t)|\lesssim t^{-c_1}+t^{-c_1}\int_T^t s^{c_1-\theta_2-1}ds\lesssim t^{-\min{(c_1,\theta_2)}}=t^{-\theta_2},
\end{align*}
since $\theta_2$ is close to $0$. Thus, we obtain \eqref{Dnosaes}. The estimate \eqref{Dnosaes2} follows directly from \eqref{Dnosaes} and Lemma \ref{repcondlem}, and we complete the proof.
\end{proof}
Here, we again compute $Z_0$, $Z_1$, and $Z_2$. We define $\theta_3$ as
\begin{align}\label{theta3}
\theta_3=\theta_2+1=\frac{\theta_1+1}{2}>1.
\end{align}
\begin{lemma}\label{3soliZesrefine}
Let $\vec{u}$ be a solution of \eqref{DNKG} satisfying \eqref{sa} and \eqref{3soliass} for some $z,\ \sigma$, and $0<\delta<\delta_3$. Then, we have for $t>0$
\begin{align}
\left|\dot{Z}_1-\frac{\mathcal{F}(D)}{D}(2Z_1+Z_2)-\frac{\mathcal{F}(D_0)}{D_0}Z_0\right|&\lesssim t^{-\theta_3},\label{Z1refes}
\\
\left|\dot{Z}_2-\frac{\mathcal{F}(D)}{D}(Z_1+2Z_2)+\frac{\mathcal{F}(D_0)}{D_0}Z_0\right|&\lesssim t^{-\theta_3}, \label{Z2refes}
\\
\left|\dot{Z}_0-\left(\frac{\mathcal{F}(D)}{D}-\frac{2\mathcal{F}(D_0)}{D_0}\right)Z_0\right|&\lesssim t^{-\theta_3}.\label{Z0refes}
\end{align}
\end{lemma}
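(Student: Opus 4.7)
The plan is to start from the already-established expansions in Lemma \ref{repcondes} and replace each appearance of $\mathcal{F}(D_i)/D_i$ (for $i=1,2$) by $\mathcal{F}(D)/D$, absorbing the resulting discrepancy into the error term using the convergence rate $|D_i - D| \lesssim t^{-\theta_3}$ from Lemma \ref{Dnosalem}. The geometric identity $Z_2 - Z_1 = Z_0$ will then collapse the $i=1,2$ contributions into exactly the form required.

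More concretely, the first step is a quantitative swap lemma: for $i=1,2$, I would estimate
\begin{align*}
\left| \frac{\mathcal{F}(D_i)}{D_i} Z_i - \frac{\mathcal{F}(D)}{D} Z_i \right|
&= |Z_i| \cdot \left| \frac{D\mathcal{F}(D_i) - D_i \mathcal{F}(D)}{D_i D} \right|
\lesssim |D_i - D| \, \mathcal{F}(D),
\end{align*}
where I use $|Z_i| = D_i$, the factorization $D\mathcal{F}(D_i) - D_i\mathcal{F}(D) = D(\mathcal{F}(D_i)-\mathcal{F}(D)) + (D-D_i)\mathcal{F}(D)$, and the derivative bound \eqref{Fesb}. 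By Lemma \ref{Dnosalem} this is bounded by $t^{-\theta_3} \cdot t^{-1} = t^{-\theta_4}$.

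The second step handles the exponential error already present in Lemma \ref{repcondes}. Since $\mathcal{F}(D) \sim (t+1)^{-1}$ implies $D \sim \log t$, and the choice $\theta_3 \leq (\theta_1 - 1)/2$ made in the proof of Lemma \ref{Dnosalem} gives $\theta_4 = \theta_3 + 1 < \theta_1$, I get $e^{-\theta_1 D} \lesssim t^{-\theta_1} \lesssim t^{-\theta_4}$. Combining with step one, the estimates \eqref{Z1es2} and \eqref{Z2es2} immediately yield \eqref{Z1refes} and \eqref{Z2refes} after rewriting $\mathcal{F}(D_i) Z_i / D_i = (\mathcal{F}(D)/D) Z_i + O(t^{-\theta_4})$.

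For $\dot Z_0$ in \eqref{Z0refes}, I start from \eqref{Z0es2} and apply the same swap to obtain
\begin{align*}
-\mathcal{F}(D_1) \frac{Z_1}{D_1} + \mathcal{F}(D_2) \frac{Z_2}{D_2} = -\frac{\mathcal{F}(D)}{D}(Z_1 - Z_2) + O(t^{-\theta_4}) = \frac{\mathcal{F}(D)}{D} Z_0 + O(t^{-\theta_4}),
\end{align*}
using $Z_2 - Z_1 = Z_0$. Adding the surviving $-2\mathcal{F}(D_0) Z_0 / D_0$ term produces the claimed coefficient $\mathcal{F}(D)/D - 2\mathcal{F}(D_0)/D_0$. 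The whole argument is essentially a routine bookkeeping exercise once Lemma \ref{Dnosalem} is in hand; the only subtlety I expect is making sure the choice of $\theta_3$ made earlier is tight enough that $e^{-\theta_1 D}$ beats $t^{-\theta_4}$, which is indeed the case by construction.
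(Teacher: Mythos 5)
Your proposal is correct and follows essentially the same route as the paper: invoke Lemma \ref{repcondes} (valid since \eqref{Vrep} holds by Lemma \ref{3solirep}), replace $\mathcal{F}(D_i)Z_i/D_i$ by $\mathcal{F}(D)Z_i/D$ with error $O(t^{-\theta_4})$ via Lemma \ref{Dnosalem}, and use $Z_2-Z_1=Z_0$ together with $e^{-\theta_1 D}\lesssim t^{-\theta_4}$. The only nitpick is the intermediate claim $e^{-\theta_1 D}\lesssim t^{-\theta_1}$, which is off by a factor $(\log t)^{\theta_1(d-1)/2}$; since $\theta_4<\theta_1$ strictly, your final bound $e^{-\theta_1 D}\lesssim t^{-\theta_4}$ is nonetheless valid.
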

\begin{proof}
By Lemma \ref{Dnosalem}, we have the following estimates:
\begin{align}\label{Dninaosu}
\begin{aligned}
\left| \mathcal{F}(D_1)-\mathcal{F}(D)\right|+\left|\mathcal{F}(D_2)-\mathcal{F}(D)\right|&\lesssim t^{-\theta_3},
\\
\left|\frac{\mathcal{F}(D_1)}{D_1}-\frac{\mathcal{F}(D)}{D}\right|+\left|\frac{\mathcal{F}(D_2)}{D_2}-\frac{\mathcal{F}(D)}{D}\right|&\lesssim \frac{t^{-\theta_3}}{D}.
\end{aligned}
\end{align}
By \eqref{Dninaosu}, we have
\begin{align}\label{D1D2Ddekinji}
\begin{aligned}
\mathcal{F}(D_1)\frac{Z_1}{D_1}&=\mathcal{F}(D)\frac{Z_1}{D}+O(t^{-\theta_3}),
\\
\mathcal{F}(D_2)\frac{Z_2}{D_2}&=\mathcal{F}(D)\frac{Z_2}{D}+O(t^{-\theta_3}).
\end{aligned}
\end{align}
By Lemma \ref{3solirep}, we have \eqref{Vrep}. Thus by Lemma \ref{repcondes} and \eqref{D1D2Ddekinji}, we obtain \eqref{Z1refes}, \eqref{Z2refes}, and \eqref{Z0refes}.
\end{proof}
Now we prove that $z_3$ converges. Essentially, if three points form a triangular shape, the angle corresponding to opposite signs will close, the distance between solitons of the same sign will decrease, and \eqref{Vrep} will not be valid. Before proceeding to the proof, we define 
\begin{align}\label{theta4}
\theta_4=\frac{\theta_1-1}{4}.
\end{align}
\begin{lemma}\label{modifiedZ}
Let $\vec{u}$ be a solution of \eqref{DNKG} satisfying \eqref{sa} and \eqref{3soliass} for some $z,\ \sigma$, and $0<\delta<\delta_3$. Then there exists $z_{\infty}\in \mathbb{R}^d$ such that
\begin{align}\label{z3es}
|z_3(t)-z_{\infty}|\lesssim t^{-\theta_4}.
\end{align}
Furthermore, we have
\begin{align}\label{D0esnew}
\mathcal{F}(D_0)\lesssim t^{-\frac{3}{2}}.
\end{align}
\end{lemma}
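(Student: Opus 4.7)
The plan is to track how $z_3$ moves relative to the midpoint of $z_1$ and $z_2$. Set $M(t):=\tfrac{1}{2}(Z_1(t)+Z_2(t))$, so $z_3=\tfrac{1}{2}(z_1+z_2)-M$. Adding \eqref{Z1refes} and \eqref{Z2refes}, the $\pm(\mathcal{F}(D_0)/D_0)Z_0$ terms cancel and
\begin{align*}
\dot M = 3\,\frac{\mathcal{F}(D)}{D}\,M + O(t^{-\theta_4}).
\end{align*}
Separately, from Lemma~\ref{expansionlem} together with Lemma~\ref{Dnosalem} and Lemma~\ref{epzes} we have $\|\vec{\varepsilon}\|_{\mathcal{H}}^2+e^{-\theta_1 D}\lesssim t^{-\theta_4}$ and $|D_i-D|\lesssim t^{-\theta_3}$ for $i=1,2$, so replacing $\mathcal{F}(D_i)/D_i$ by $\mathcal{F}(D)/D$ costs only $O(t^{-\theta_4})$ and
\begin{align*}
\dot z_3 = -\mathcal{F}(D_1)\frac{Z_1}{D_1} - \mathcal{F}(D_2)\frac{Z_2}{D_2} + O(t^{-\theta_4}) = -2\,\frac{\mathcal{F}(D)}{D}\,M + O(t^{-\theta_4}).
\end{align*}

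Combining these two ODEs, the $\mathcal{F}(D)M/D$ contributions cancel:
\begin{align*}
\frac{d}{dt}\!\left(z_3+\tfrac{2}{3}M\right) = O(t^{-\theta_4}).
\end{align*}
Since $\theta_4=\theta_3+1>1$ the right-hand side is integrable on $[1,\infty)$, so $z_3+\tfrac{2}{3}M$ converges to some $z_\infty\in\mathbb{R}^d$ with $|(z_3+\tfrac{2}{3}M)(t)-z_\infty|\lesssim t^{1-\theta_4}=t^{-\theta_3}$.

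The main obstacle is to show $|M(t)|\lesssim t^{-\theta_3}$, because the linear part of the ODE for $M$ is unstable in its natural time scale. Define the integrating factor $\Lambda(t):=3\int_{t_0}^t \mathcal{F}(D(s))/D(s)\,ds$; since $\dot D\sim\mathcal{F}(D)$, we have $\Lambda(t)=3\log D(t)+O(1)=3\log\log t+O(1)$, so $e^{\Lambda(t)}\sim(\log t)^3$. The 3-solitary-wave assumption provides the crude bound $|M(t)|\le\tfrac{1}{2}(D_1+D_2)\lesssim\log t$, whence $e^{-\Lambda(T)}|M(T)|\lesssim(\log T)^{-2}\to 0$ as $T\to\infty$. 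Writing the ODE as $\tfrac{d}{dt}(e^{-\Lambda}M)=e^{-\Lambda}R$ with $|R|\lesssim t^{-\theta_4}$ and integrating from $t$ to $T$, then sending $T\to\infty$, one obtains
\begin{align*}
M(t) = -e^{\Lambda(t)}\int_t^\infty e^{-\Lambda(s)}\,R(s)\,ds,
\end{align*}
and monotonicity of $e^{-\Lambda}$ yields $|M(t)|\le\int_t^\infty|R(s)|\,ds\lesssim t^{1-\theta_4}=t^{-\theta_3}$.

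Combining the two preceding bounds gives $|z_3(t)-z_\infty|\lesssim t^{-\theta_3}$, proving \eqref{z3es} with $\theta_5:=\theta_3$. For \eqref{D0esnew}, the identities $Z_1=M-Z_0/2$ and $Z_2=M+Z_0/2$ give $D_0^2=2(D_1^2+D_2^2)-4|M|^2\ge 4D^2-4|M|^2$, hence $D_0\ge 2D-O(|M|^2/D)\ge 2D-O(1)$ for $t$ large. Then \eqref{Fesa} and \eqref{logtorder} yield $\mathcal{F}(D_0)\lesssim D^{-(d-1)/2}e^{-2D}\sim(\log t)^{(d-1)/2}/t^2\lesssim t^{-3/2}$, the last bound holding because $d\le 5$.
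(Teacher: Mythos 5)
Your overall architecture (centroid of the three centers converges; the midpoint deviation $M=\tfrac12(Z_1+Z_2)=\tfrac12 W$ must vanish; then the law of cosines gives $D_0\ge 2D-o(1)$ and hence \eqref{D0esnew}) matches the paper's, and the first and last steps are fine. The gap is in the key step, the decay of $M$. Your backward-Duhamel argument for the unstable ODE $\dot M=3\tfrac{\mathcal{F}(D)}{D}M+R$ hinges entirely on the boundary term $e^{-\Lambda(T)}|M(T)|$ vanishing as $T\to\infty$, and your justification of that rests on the assertion $\Lambda(t)=3\log D(t)+O(1)$, hence $e^{\Lambda(t)}\sim(\log t)^3$. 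That assertion does not follow from ``$\dot D\sim\mathcal{F}(D)$'': a two-sided bound $\tfrac13\mathcal{F}(D)\le\dot D\le C_1\mathcal{F}(D)$ only yields $e^{\Lambda(t)}\gtrsim(\log t)^{3/C_1}$, and the bounds available at this stage (from \eqref{D1es2} with $|Z_0\cdot Z_1|\le D_0D_1$, or equivalently from the constant in \eqref{logtorder}) give $C_1\ge 3$, so $3/C_1\le 1$. Then $e^{-\Lambda(T)}|M(T)|\lesssim(\log T)^{1-3/C_1}$ need not tend to $0$, the representation $M(t)=-e^{\Lambda(t)}\int_t^\infty e^{-\Lambda}R$ is not established, and indeed the homogeneous solution grows like $(\log t)^{3/C_1}$, which is perfectly consistent with your only a priori input $|M|\lesssim\log t$. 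So the crude bound alone cannot kill the unstable mode; the identity $\Lambda=3\log D+O(1)$ would require $\dot D=\mathcal{F}(D)(1+o(1))$ with constant exactly $1$, which is equivalent to the asymptotic collinearity you are trying to prove.

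To close the gap you need extra geometric input. One route consistent with your scheme: Lemma \ref{repcondlem} gives $D_0\ge\tilde D$, whence by the law of cosines $Z_0\cdot Z_1=Z_1\cdot Z_2-D_1^2\le-\tfrac12 D^2(1+o(1))$, which sharpens \eqref{D1es2} to $\dot D\le(\tfrac52+o(1))\mathcal{F}(D)$ and hence $e^{\Lambda(t)}\gtrsim(\log t)^{6/5}$; only then does $e^{-\Lambda(T)}|M(T)|\lesssim(\log T)^{-1/5}\to0$ and your argument goes through. The paper instead avoids quantifying $\Lambda$ at all: it argues by contradiction, showing via \eqref{Gronwalles} that if $|W(T)|>T^{-\hat\theta}$ once, then $|W(t)|\to\infty$, and then that the ratio $\mathcal{X}=D_0/|W|$ obeys a \emph{decaying} ODE \eqref{mathcalX}, so $\mathcal{X}\to0$ and eventually $D_0<\tfrac13|W|\le\tfrac13(D_1+D_2)<D$, contradicting Lemma \ref{repcondlem}. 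Either way, the constraint $D_0\ge\tilde D$ is the essential input you are missing; as written, your proof of $|M|\lesssim t^{-\theta_3}$ does not go through.
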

\begin{proof}
First, by Lemma \ref{3solirep} and Lemma \ref{epzes}, we have
\begin{align*}
\left|\dot{z}_1+\dot{z}_2+\dot{z}_3\right|\lesssim e^{-\theta_1 D}\lesssim t^{-\theta_3}.
\end{align*}
Thus there exists $z_a\in \mathbb{R}^d$ such that 
\begin{align}\label{jushines}
\left|(z_1(t)+z_2(t)+z_3(t))-z_a\right|\lesssim t^{-\theta_3+1}\lesssim t^{-\theta_4}.
\end{align}
Second, we prove
\begin{align}\label{mannnakaes}
\left|z_1(t)+z_2(t)-2z_3(t)\right|\lesssim t^{-\theta_4}.
\end{align}
By Lemma \ref{3soliZesrefine}, when we define
\begin{align*}
W=Z_1+Z_2,
\end{align*}
we have
\begin{align}\label{Wes}
\left|\dot{W}-\frac{3\mathcal{F}(D)}{D}W\right|\lesssim t^{-\theta_3}.
\end{align}
Furthermore, we have
\begin{align}\label{Wes2}
\left| \frac{d}{dt}|W|-\frac{3\mathcal{F}(D)}{D}|W|\right|\lesssim t^{-\theta_3}.
\end{align}
By  \eqref{logtorder}, we have $\mathcal{F}(D)\sim t^{-1}$ and therefore we have $D\sim \log{t}$. Thus, there exists $c>0$ such that for $t>2$
\begin{align}\label{Wdekai}
\frac{d}{dt}|W|\geq \frac{c}{t\log{t}}|W|-c^{-1}t^{-\theta_3}.
\end{align}
In particular, by \eqref{Wdekai}, we have for $t>s>2$
\begin{align}\label{Gronwalles}
|W(t)|\geq (\log{t})^c\left(\frac{ |W(s)|}{(\log{s})^c}-Cs^{-\theta_2}\right)
\end{align}
for some $C>0$ independent of $T>0$. We assume that there exists $T\gg 1$ such that 
\begin{align*}
|W(T)|>T^{-\theta_4}>2CT^{-\theta_2}(\log{T})^c.
\end{align*}
Then, by \eqref{Gronwalles}, we obtain $|W(t)|\to \infty$ as $t\to\infty$. We define 
\begin{align*}
\mathcal{X}(t)=\frac{D_0(t)}{|W(t)|}.
\end{align*}
Then by Lemma \ref{3soliZesrefine}, we have
\begin{align*}
\left|\frac{d}{dt}D_0-\left(\frac{\mathcal{F}(D)}{D}-\frac{2\mathcal{F}(D_0)}{D_0}\right)D_0\right|\lesssim t^{-\theta_3},
\end{align*}
and therefore we have
\begin{align}\label{mathcalX}
\begin{aligned}
\frac{d}{dt}\mathcal{X}&=\frac{1}{|W(t)|^2}\left(\left(\frac{d}{dt}D_0(t)\right)|W(t)|-\left(\frac{d}{dt}|W(t)|\right)D_0(t)\right)
\\
&=-\frac{2D_0(t)}{|W(t)|}\left(\frac{\mathcal{F}(D(t))}{D(t)}+\frac{\mathcal{F}(D_0(t))}{D_0(t)}\right)+O\left(\frac{(|W(t)|+D_0(t))t^{-\theta_3}}{|W(t)|^2}\right)
\\
&=-\left(\frac{2\mathcal{F}(D(t))}{D(t)}+\frac{2\mathcal{F}(D_0(t))}{D_0(t)}\right)\mathcal{X}(t)+O(t^{-\theta_4-1}).
\end{aligned}
\end{align}
We note that the bottom of \eqref{mathcalX} follows from 
\begin{align*}
 \frac{(|W(t)|+D_0(t))t^{-\theta_3}}{|W(t)|^2}\lesssim D_0(t)t^{-\theta_3}\lesssim D(t)t^{-\theta_3}\lesssim t^{-\theta_3}(\log{t})\lesssim t^{-\theta_4-1}.
 \end{align*}
Thus there exists $c_1>0$ such that 
\begin{align*}
\frac{d}{dt}\mathcal{X}(t)\leq -\frac{c_1}{t\log{t}}\cdot \mathcal{X}(t)+c_1^{-1}t^{-\theta_4-1}.
\end{align*}
Then by the Gronwall's inequality, we obtain $\lim_{t\to\infty}\mathcal{X}(t)=0$. Then, for large $t$, we have
\begin{align*}
D_0(t)<\frac{1}{3}|W(t)|=\frac{1}{3}|Z_1(t)+Z_2(t)|\leq \frac{1}{3}(D_1(t)+D_2(t))<D(t),
\end{align*}
which contradicts Lemma \ref{repcondlem}. Therefore, there exists $T>0$ such that for $t>T$
\begin{align*}
|W(t)|=|z_1(t)+z_2(t)-2z_3(t)|\lesssim t^{-\theta_4},
\end{align*}
which implies \eqref{mannnakaes}. By \eqref{jushines} and \eqref{mannnakaes}, we obtain \eqref{z3es}, where we define $z_{\infty}=\frac{z_a}{3}$. Finally, we show \eqref{D0esnew}. By Lemma \ref{Dnosalem}, we have
\begin{align*}
D_0=|z_1-z_2|=|2(z_1-z_3)-(z_1+z_2-2z_3)|\geq 2D_1-|W|\geq \frac{5}{3}D,
\end{align*}
and we obtain \eqref{D0esnew}. So we complete the proof.
\end{proof}

\subsection{Proof of Theorem \ref{maintheorem}}
Here, we prove Theorem \ref{maintheorem}. First, we show the following lemma.
\begin{lemma}\label{mainlemma}
Let $\vec{u}$ be a solution of \eqref{DNKG} satisfying \eqref{sa} and \eqref{3soliass} for some $z,\ \sigma$, and $0<\delta<\delta_3$. Then there exist $c_0>0$ depending only on $d,\ p,\ \alpha$, and  $z_{\infty}\in \mathbb{R}^d$ and $\omega_{\infty}\in S^{d-1}$ such that 
\begin{align}
\left|z_1(t)-z_{\infty}-\omega_{\infty}\left(\log{t}-\frac{d-1}{2}\log{(\log{t})}+c_0\right)\right|&\lesssim \frac{\log{(\log{t})}}{\log{t}}, \label{modifiedz1}
\\
\left|z_2(t)-z_{\infty}+\omega_{\infty}\left(\log{t}-\frac{d-1}{2}\log{(\log{t})}+c_0\right)\right|&\lesssim \frac{\log{(\log{t})}}{\log{t}}, \label{modifiedz2}
\\
\left|z_3(t)-z_{\infty}\right|&\lesssim t^{-\theta_4}. \label{modifiedz3}
\end{align}
\end{lemma}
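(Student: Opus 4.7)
The plan is to decompose the analysis into three pieces: (i) the direction of the vector $Z_1(t) = z_1(t) - z_3(t)$ converges polynomially fast to some unit vector $\omega_\infty$; (ii) the magnitude $D_1(t) = |Z_1(t)|$ obeys the sharp asymptotic $\log t - \frac{d-1}{2}\log\log t + c_0 + O(\log\log t/\log t)$; (iii) these combine with the already-established $|z_3(t) - z_\infty| \lesssim t^{-\theta_5}$ (Lemma \ref{modifiedZ}), together with $Z_2 = -Z_1 + W$, where $|W| = |Z_1+Z_2| \lesssim t^{-\theta_6}$ for some $\theta_6 > 0$ (implicit in the proof of Lemma \ref{modifiedZ}), and $|D_1 - D_2| \lesssim t^{-\theta_3}$ (Lemma \ref{Dnosalem}), to produce \eqref{modifiedz1} and \eqref{modifiedz2}; the estimate \eqref{modifiedz3} is precisely Lemma \ref{modifiedZ}.

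For (i), set $\omega_1(t) = Z_1(t)/D_1(t)$ and use Lemma \ref{3soliZesrefine} together with $2Z_1 + Z_2 = Z_1 + W$ and $Z_0 = -2Z_1 + W$ to write
\begin{align*}
\dot{Z}_1 = \frac{\mathcal{F}(D)}{D}(Z_1 + W) + \frac{\mathcal{F}(D_0)}{D_0} Z_0 + O(t^{-\theta_4}).
\end{align*}
Projecting onto $\omega_1^\perp$, the radial piece $\frac{\mathcal{F}(D)}{D} Z_1$ vanishes, leaving the integrable bound $|\dot\omega_1(t)| \lesssim t^{-1-\theta_6}(\log t)^{-2} + t^{-3/2}(\log t)^{-1} + t^{-\theta_4}(\log t)^{-1}$. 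Hence $\omega_1(t) \to \omega_\infty \in S^{d-1}$ at polynomial rate, and consequently $D_1(t)\,|\omega_1(t) - \omega_\infty| \ll \log\log t/\log t$.

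For (ii), insert $Z_0 \cdot Z_1 = -2D_1^2 + O(D_1 |W|)$ and $D_2 = D_1 + O(t^{-\theta_3})$ into \eqref{D1es2}; after cancellations this yields
\begin{align*}
\frac{d}{dt} D_1 = \mathcal{F}(D_1) + O\!\left(t^{-1-\theta_3} + t^{-3/2}\right).
\end{align*}
With $G(r) = \int_1^r \mathcal{F}(s)^{-1} ds$ as in Lemma \ref{Dupperbound}, this becomes $\frac{d}{dt} G(D_1) = 1 + O(t^{-\min(\theta_3,1/2)})$, whence $G(D_1(t)) = t + O(t^{1-\min(\theta_3,1/2)})$. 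The expansion \eqref{Fesa} gives $G(r) = c_\star^{-1} r^{(d-1)/2} e^r (1 + O(1/r))$, whose inverse admits $G^{-1}(\tau) = \log\tau - \frac{d-1}{2}\log\log\tau + c_0 + O(\log\log\tau/\log\tau)$ with $c_0 = \log c_\star$ depending only on $d,p,\alpha$. Since $(G^{-1})'(\tau) \sim c_\star/\tau$, the polynomial-in-$\tau$ error translates into a polynomial-in-$t^{-1}$, hence subdominant, error in $D_1$, giving the claimed asymptotic.

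Finally, $z_1 = z_3 + D_1 \omega_1 = z_\infty + D_1 \omega_\infty + O(t^{-\theta_5}) + D_1(\omega_1 - \omega_\infty)$ proves \eqref{modifiedz1}, and $z_2 = z_3 + Z_2 = z_\infty - D_1 \omega_\infty + O(t^{-\theta_5} + t^{-\theta_6}) + D_1(\omega_\infty - \omega_1)$ proves \eqref{modifiedz2} (replacing $D_1$ by $D_2$ there contributes only $O(t^{-\theta_3})$). The main obstacle is the sharp inversion of $G$ yielding the logarithmic remainder $O(\log\log t/\log t)$, which is the actual claimed rate; every other error in (i) and (ii) decays polynomially in $t^{-1}$ and is comfortably subsumed.
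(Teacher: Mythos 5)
Your proposal is correct and reaches the same radial-ODE-plus-$G$-inversion conclusion as the paper. The structural difference is the choice of base point: you track $Z_1 = z_1 - z_3$ and $D_1 = |Z_1|$, deriving the scalar ODE $\frac{d}{dt}D_1 = \mathcal{F}(D_1) + O(t^{-1-\theta_3}+t^{-3/2})$ directly from \eqref{D1es2} by inserting $Z_0 \cdot Z_1 = -2D_1^2 + O(D_1|W|)$ and $D_2 = D_1 + O(t^{-\theta_3})$, and you get the direction convergence of $\omega_1 = Z_1/D_1$ by projecting the $\dot Z_1$ equation from Lemma \ref{3soliZesrefine}. The paper instead first substitutes $z_3 \to z_\infty$ (using Lemma \ref{modifiedZ} together with $\mathcal{F}(D_0) \lesssim t^{-3/2}$ from \eqref{D0esnew}) to reduce to the genuine one-soliton radial ODE \eqref{z1ode} for $z_1 - z_\infty$, from which both the magnitude asymptotic \eqref{Dsharpes} and the angular convergence \eqref{omegaconv} fall out formally. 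Your route requires more explicit bookkeeping of the $W$, $D_0$, and $|D_1-D_2|$ error contributions, whereas the paper's $z_\infty$-substitution absorbs most of those into a single $O(t^{-\theta})$ and makes the single-soliton structure manifest; conversely, your version makes clearer exactly which estimates from Lemmas \ref{Dnosalem} and \ref{modifiedZ} feed into which error terms. Both correctly identify the $G$-inversion as the source of the $O(\log\log t/\log t)$ remainder, and your explicit formula $c_0 = \log c_\star$ is consistent with the paper's unstated constant.
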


\begin{proof}
First, by Lemma \ref{modifiedZ}, we have \eqref{modifiedz3}. By Lemma \ref{3solirep}, $\vec{u}$ satisfies \eqref{Vrep}. Then by Lemma \ref{epzes} and Lemma \ref{modifiedZ}, we have
\begin{align}\label{z1ode}
\left|\dot{z}_1-\mathcal{F}(|z_1-z_{\infty}|)\frac{z_1-z_{\infty}}{|z_1-z_{\infty}|}\right|\lesssim t^{-\theta_4-1}.
\end{align}
By $\mathcal{F}(D)\sim t^{-1}$ and \eqref{z1ode}, we obtain
\begin{align*}
\left|\frac{\frac{d}{dt}|z_1-z_\infty|}{\mathcal{F}(|z_1-z_{\infty}|)}-1\right|\lesssim t^{-\theta_4}.
\end{align*}
Thus, there exists $c_0\in \mathbb{R}$ depending only on $d,\ p,\ \alpha$ such that
\begin{align}\label{Dsharpes}
\left||z_1(t)-z_{\infty}|-\left(\log{t}-\frac{d-1}{2}\log{(\log{t})}+c_0\right)\right|\lesssim \frac{\log{(\log{t})}}{\log{t}}.
\end{align}
Furthermore, by \eqref{Dsharpes}, we have
\begin{align*}
\left|\frac{d}{dt} \left(\frac{z_1-z_{\infty}}{|z_1-z_{\infty}|}\right)\right|\lesssim t^{-\theta_4-1},
\end{align*}
and therefore, there exists $\omega_{\infty}\in S^{d-1}$ such that 
\begin{align}\label{omegaconv}
\left| \frac{z_1-z_{\infty}}{|z_1-z_{\infty}|}-\omega_{\infty}\right|\lesssim t^{-\theta_4}.
\end{align}
By \eqref{Dsharpes} and \eqref{omegaconv}, we obtain \eqref{modifiedz1}. Last, by \eqref{mannnakaes}, \eqref{modifiedz1} and \eqref{modifiedz3}, we obtain \eqref{modifiedz2}. Therefore we complete the proof.
\end{proof}
Here, we prove Theorem \ref{maintheorem}.
\begin{proof}[Proof of Theorem \ref{maintheorem}]
If $\vec{u}$ is a $3$-solitary waves, by \cite[Theorem 1.8]{CD}, we may assume $\sigma_1=\sigma_2=-\sigma_3$. Then by \eqref{Ksoli} and Lemma \ref{modKsoli}, there exists $T>0$ such that for $t>T$, \eqref{modlim1}-\eqref{modeq} hold true for some $\tilde{z}$. Furthermore, we may assume for $t>T$
\begin{align*}
\|\vec{\varepsilon}\|_{\mathcal{H}}+\frac{1}{D(t)}<\delta<\delta_3.
\end{align*}
Here, we define 
\begin{align*}
\vec{\tilde{u}}(t)=\vec{u}(t+T).
\end{align*}
Then by \eqref{invariance}, $\vec{\tilde{u}}$ satisfies \eqref{sa} and \eqref{3soliass} for some $\tilde{z},\ \sigma$, and $0<\delta<\delta_3$. Therefore, by \eqref{invariance} and Lemma \ref{mainlemma}, there exist $\tilde{z}_{\infty}$ and $\tilde{\omega}_{\infty}$ such that 
\begin{align*}
\left|\tilde{z}_1(t+T)-\tilde{z}_{\infty}-\tilde{\omega}_{\infty}\left(\log{t}-\frac{d-1}{2}\log{(\log{t})}+c_0\right)\right|&\lesssim \frac{\log{(\log{t})}}{\log{t}}, 
\\
\left|\tilde{z}_2(t+T)-\tilde{z}_{\infty}+\tilde{\omega}_{\infty}\left(\log{t}-\frac{d-1}{2}\log{(\log{t})}+c_0\right)\right|&\lesssim \frac{\log{(\log{t})}}{\log{t}}, 
\\
\left|\tilde{z}_3(t+T)-\tilde{z}_{\infty}\right|&\lesssim t^{-\theta_4},
\end{align*} 
and therefore we have
\begin{align*}
\left|\tilde{z}_1(t)-\tilde{z}_{\infty}-\tilde{\omega}_{\infty}\left(\log{t}-\frac{d-1}{2}\log{(\log{t})}+c_0\right)\right|&\lesssim \frac{\log{(\log{t})}}{\log{t}}, 
\\
\left|\tilde{z}_2(t)-\tilde{z}_{\infty}+\tilde{\omega}_{\infty}\left(\log{t}-\frac{d-1}{2}\log{(\log{t})}+c_0\right)\right|&\lesssim \frac{\log{(\log{t})}}{\log{t}}, 
\\
\left|\tilde{z}_3(t)-\tilde{z}_{\infty}\right|&\lesssim t^{-\theta_4}.
\end{align*}
In addition, by the proof of Lemma \ref{modKsoli}, we have
\begin{align*}
|z(t)-\tilde{z}(t)|\lesssim \|\vec{\varepsilon}(t)\|_{\mathcal{H}}.
\end{align*}
Furthermore, by Lemma \ref{epzes} and \eqref{logtorder}, we have
\begin{align*}
\|\vec{\varepsilon}(t)\|_{\mathcal{H}}+|z(t)-\tilde{z}(t)|\lesssim \frac{1}{t}.
\end{align*}
Gathering these arguments, we obtain \eqref{3soli} for $\tilde{z}$ and $\tilde{z}$ satisfies \eqref{zes} for $\tilde{z}_{\infty}\in \mathbb{R}^d$ and $\tilde{\omega}_{\infty}\in S^{d-1}$. Thus we complete the proof.
\end{proof}

\section*{Acknowledgement}

The author wishes to thank Kenji Nakanishi for many helpful comments, discussions and encouragement on the present paper.

\end{document}